\newcommand{\0}{\mathds{O}}
\newcommand{\R}{\mathbb{R}}
\newcommand{\N}{\mathbb{N}}
\newcommand{\So}{\mathrm{S}}
\newcommand{\Bo}{\mathcal{B}}
\newcommand{\Bob}{\mathrm{B}}
\newcommand{\8}{\infty}
\newcommand{\spa}{\mathrm{span}}
\newcommand{\Co}{\mathcal{C}}
\newcommand{\Lo}{\mathcal{L}}
\newcounter{dummy} \numberwithin{dummy}{section}
\newtheorem{theorem}[dummy]{Theorem}
\newtheorem{proposition}[dummy]{Proposition}
\newtheorem{corollary}[dummy]{Corollary}
\newtheorem{question}[dummy]{Question}
\theoremstyle{remark}
\begin{document}

\title{Disjointly non-singular operators and various topologies on Banach lattices}
\author{Eugene Bilokopytov\footnote{Email address bilokopy@ualberta.ca, erz888@gmail.com.}}
\maketitle

\begin{abstract}
We continue the study of dispersed subspaces and disjointly non-singular (DNS) operators on Banach lattices using topological methods. In particular, we provide a simple proof of the fact that in an order continuous Banach lattice an operator is DNS if and only if it is $n$-DNS, for some $n\in\N$. We characterize Banach lattices with order continuous dual in terms of dispersed subspaces and absolute weak topology. We also connect these topics with the recently launched study of phase retrieval in Banach lattices.

\emph{Keywords:} Banach lattices, disjointly non-singular operators, dispersed subspaces, stable phase retrieval;

MSC2020 46B42, 47A53, 47B60.
\end{abstract}

\section{Introduction and preliminaries}

Dispersed subspaces and disjointly non-singular (DNS) operators on Banach lattices were introduced in \cite{gmm} in order to perform an abstract study of the objects well-known for function spaces. In particular, dispersed subspaces of $L_{1}$ are the same as reflexive, and DNS operators on this space are the same as tauberian (see \cite{gm}). These topics were further explored in \cite{gm1} and \cite{gm2}. The latter work introduced the concept of $n$-DNS operators and $n$-dispersed subspaces, and showed among other things that every DNS operator / dispersed subspace of an order continuous Banach lattice is $n$-DNS / $n$-dispersed. In the particular case $n=2$ the notion of $n$-dispersed subspace coincides with that of a subspace which does stable phase retrieval (SPR). The latter concept in the context of Banach lattices was introduced and thoroughly investigated in \cite{fopt} (again, generalizing a classical notion from the theory of function spaces).

The author contributed to the study of DNS operators and dispersed subspaces in \cite{erz} by developing a topological approach to the subject (as opposed to the more Banach space theoretic methods used in \cite{gmm,gm1,gm2}). In this article we continue this line of investigation, taking advantage of the rich structure on Banach lattices, and in particular a number of topologies that they automatically carry. This allows us to improve some results from \cite{gm2} (mainly by removing the assumption of existence of a weak unit), as well as simplify their proofs. Among those is the aforementioned result on the finite nature of DNS operators on order continuous Banach lattices (Theorem \ref{udns}), as well as the fact that all such operators are tauberian (Theorem \ref{disref}). We also provide a topological proof of the fact that if $\Co\left(K\right)$ has a non-trivial dispersed subspace, then $K$ has finitely many non-isolated points (Theorem \ref{kpck}), strengthening a result from \cite{fopt}. An example of an original result is the characterization (Theorem \ref{dual}) of Banach lattices with an order continuous dual in terms of the behavior of the aw-topology on dispersed subspaces.

After short preliminaries, in Section \ref{dnss} we prove some elementary results about DNS operators and dispersed subspaces, as well as the finite analogues of these concepts in Section \ref{dnsn}. This naturally leads us to considering the notion of SPR. In Section \ref{dispe} we introduce some properties of topologies on Banach lattices in terms of behavior of almost disjoint sequences. We then see which of the ``derivative'' topologies on Banach lattices possess these properties. In particular, we show (Proposition \ref{uaw}) that the uaw-topology is always uniformly exhaustive. This fact is somewhat analogous to the properties of the topologies on Boolean algebras induced by measures (as opposed to generic submeasures). In Section \ref{awd} we focus on the connections of the aw-topology with the dispersed subspaces and DNS operators. Section \ref{conv} is optional; there we see if the ideas from the paper are applicable to some non-linear topologies, or non-topological convergences which exist on a Banach lattice. In the final Section \ref{psps} we try to establish an exact relationship between tauberian and DNS operators.

Now let us briefly recall the main concepts needed in order to use the aforementioned topological methods from \cite{erz}. Let $F$ be a normed space.\medskip

Let $\tau$ be a linear topology on $F$, which is weaker than the norm topology. A semi-norm $\rho$ \emph{complements} $\tau$ if there is no net $\left(f_{p}\right)_{p\in P}\subset \So_{F}$ which is simultaneously $\tau$-null and $\rho$-null. Clearly, if in this case $\pi$ is stronger than $\tau$, then $\rho$ complements $\pi$ as well. We can also introduce a sequential version of complementarity with the obvious definition.

\begin{proposition}[Part of Theorem 2.2, \cite{erz}]\label{compl}For a semi-norm $\rho$, the following conditions are equivalent:
\item[(i)] $\rho$ complements $\tau$;
\item[(ii)] There is $\delta>0$ such that $0_{F}$ is $\tau$-separated from $\left\{f\in \So_{F},~ \rho\left(f\right)\le\delta\right\}$;
\item[(iii)] If a net is simultaneously $\tau$-null and $\rho$-null, then it is norm-null.\medskip

Moreover, the sequential versions of (i) and (iii) are also equivalent.
\end{proposition}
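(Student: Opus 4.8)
The plan is to run the cycle (i) $\Rightarrow$ (ii) $\Rightarrow$ (iii) $\Rightarrow$ (i) for the net statements, and then to treat the ``moreover'' clause separately, since condition (ii) is intrinsically topological and has no natural sequential transcription. For (i) $\Rightarrow$ (ii) I would argue by contraposition. Writing $A_{\delta}:=\left\{f\in \So_{F}:~\rho\left(f\right)\le\delta\right\}$, the failure of (ii) says that for \emph{every} $\delta>0$ the point $0_{F}$ lies in the $\tau$-closure of $A_{\delta}$, i.e. every $\tau$-neighborhood of $0_{F}$ meets $A_{\delta}$. From this I would manufacture a single net witnessing the failure of (i) by indexing over the directed set $P$ of pairs $\left(U,n\right)$, where $U$ ranges over $\tau$-neighborhoods of $0_{F}$ (ordered by reverse inclusion) and $n\in\N$, with the product order. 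For each $p=\left(U,n\right)$ one may choose $f_{p}\in U\cap A_{1/n}$; then $\left(f_{p}\right)_{p\in P}\subset \So_{F}$ is $\tau$-null (the neighborhood coordinate forces eventual membership in any prescribed $U_{0}$) and $\rho$-null (the integer coordinate forces $\rho\left(f_{p}\right)\le 1/n\to 0$), contradicting (i).

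For (ii) $\Rightarrow$ (iii), fix the $\delta$ supplied by (ii) and let $\left(f_{p}\right)$ be a net that is simultaneously $\tau$-null and $\rho$-null; suppose toward a contradiction that it is not norm-null. Passing to a cofinal subnet I may assume $\left\|f_{p}\right\|\ge\epsilon$ for some $\epsilon>0$, and I would normalize, setting $g_{p}:=f_{p}/\left\|f_{p}\right\|\in \So_{F}$. Then $\rho\left(g_{p}\right)=\rho\left(f_{p}\right)/\left\|f_{p}\right\|\le\rho\left(f_{p}\right)/\epsilon\to 0$, so $g_{p}$ lies eventually in $A_{\delta}$, and moreover $\left(g_{p}\right)$ remains $\tau$-null. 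This last assertion is the one genuinely technical point: I would invoke the fact that in a topological vector space a bounded-scalar multiple of a $\tau$-null net is again $\tau$-null, which applies since $1/\left\|f_{p}\right\|\le 1/\epsilon$, by absorbing the scalars into a balanced $\tau$-neighborhood of $0_{F}$. A $\tau$-null net eventually inside $A_{\delta}$ places $0_{F}$ in the $\tau$-closure of $A_{\delta}$, contradicting the $\tau$-separation in (ii). The implication (iii) $\Rightarrow$ (i) is then immediate: every member of $\So_{F}$ has norm $1$, so a net in $\So_{F}$ cannot be norm-null, whence (iii) forbids the existence of a $\tau$-null $\rho$-null net in $\So_{F}$, which is precisely (i).

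For the final clause, (iii) $\Rightarrow$ (i) holds verbatim at the sequential level. For the converse, sequential (i) $\Rightarrow$ sequential (iii), I would run the normalization argument of (ii) $\Rightarrow$ (iii) directly against (i) rather than through (ii): given a $\tau$-null, $\rho$-null sequence failing to be norm-null, pass to a subsequence with $\left\|f_{n_{k}}\right\|\ge\epsilon$, normalize to $g_{k}\in \So_{F}$, and observe exactly as above that $\left(g_{k}\right)$ is $\tau$-null and $\rho$-null, contradicting sequential (i). The reason this must bypass (ii) is that the net built in (i) $\Rightarrow$ (ii) is indexed by neighborhoods and is not in general replaceable by a sequence. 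I expect the scalar-multiplication stability of $\tau$-convergence to be the only real work in the whole argument, with everything else reducing to bookkeeping on directed sets and the trivial observation that the unit sphere contains no norm-null net.
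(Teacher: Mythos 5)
Your proposal is correct and follows essentially the same route as the paper: the neighborhood-indexed net for (i)$\Rightarrow$(ii) is the argument the paper imports from \cite{erz}, the normalization-plus-bounded-scalar step is exactly the paper's proof of (i)$\Rightarrow$(iii) (merely run against (ii) instead of against (i), since you arrange the implications as a cycle rather than with (i) as a hub), and the sequential clause is handled the same way. The only point worth flagging is that you correctly identify the scalar-multiplication stability of $\tau$-convergence as the real content of the normalization step; the paper relies on this silently here and only makes it explicit later, in Section \ref{conv}, when extending the proposition to locally balanced group topologies.
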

\begin{proof}
(i)$\Rightarrow$(ii) is proven in the same way as the same implication in \cite[Theorem 2.2]{erz}. (ii)$\Rightarrow$(i) and  (iii)$\Rightarrow$(i) straightforward. (i)$\Rightarrow$(iii): (c.f. Section \ref{conv}). Assume that $\left(f_{p}\right)_{p\in P}\subset F$ is such that $f_{p}\xrightarrow[]{\tau}0_{F}$ and $\rho\left(f_{p}\right)\to 0$, but $\|f_{p}\|\ge\varepsilon>0$, for every $p\in P$. Then, for $e_{p}:=\frac{1}{\|f_{p}\|}\in\So_{E}$, $p\in P$, we have that $e_{p}\xrightarrow[]{\tau}0_{F}$ and $\rho\left(e_{p}\right)\to 0$. Contradiction. The last claim is proven analogously.
\end{proof}

Let $T\in\Lo\left(F,H\right)$. Most of the information about $T$ which is relevant to our investigation is contained in the semi-norm $\rho_{T}$ defined by $\rho_{T}\left(f\right):=\|Tf\|$, $f\in F$. In the case when $T$ is the quotient map with respect to a (not necessarily closed) subspace $E\subset F$, we use the notation $\rho_{E}$ so that $\rho_{E}\left(f\right):=d\left(f,E\right)$, $f\in F$.

\begin{proposition}\label{compl2}For a subspace $E$ of $F$ the following conditions are equivalent:
\item[(i)] $0_{F}$ is $\tau$-separated from $\So_{E}$;
\item[(ii)] $\tau$ agrees with the norm topology on $E$;
\item[(iii)] $\rho_{E}$ is $\tau$-complementary on $F$.\medskip

In this case $\tau$ coincides with the norm topology on $\overline{E}$, and the closures of $E$ in $F$ are the same in both topologies.
\end{proposition}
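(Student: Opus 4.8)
The plan is to establish the cycle of implications (i)$\Rightarrow$(ii)$\Rightarrow$(iii)$\Rightarrow$(i), and then to treat the concluding ``moreover'' paragraph separately. For (i)$\Rightarrow$(ii), since $\tau$ is already weaker than the norm topology, it suffices to show that every norm-neighbourhood of $0$ in $E$ is a $\tau$-neighbourhood. Starting from a \emph{balanced} $\tau$-neighbourhood $U$ of $0_{F}$ disjoint from $\So_{E}$, I would check that $U\cap E$ is contained in the open unit ball of $E$: if some $e\in U\cap E$ had $\|e\|\ge 1$, then balancedness would force $\frac{1}{\|e\|}e\in U\cap\So_{E}$, a contradiction. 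Rescaling $U$ by $\varepsilon>0$ then exhibits arbitrarily small norm-neighbourhoods of $0$ in $E$ as $\tau$-neighbourhoods, which gives (ii).

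For (ii)$\Rightarrow$(iii), suppose towards a contradiction that some net $\left(f_{p}\right)\subset\So_{F}$ is simultaneously $\tau$-null and $\rho_{E}$-null. Since $\rho_{E}\left(f_{p}\right)=d\left(f_{p},E\right)\to 0$, I can choose $e_{p}\in E$ with $\|f_{p}-e_{p}\|\to 0$; then $e_{p}=f_{p}-\left(f_{p}-e_{p}\right)$ is $\tau$-null (the second summand is even norm-null), while $\|e_{p}\|\to\|f_{p}\|=1$. This contradicts (ii), under which $\tau$-nullity of the net $\left(e_{p}\right)\subset E$ would force norm-nullity. For (iii)$\Rightarrow$(i), I would note that $\So_{E}\subset\So_{F}$ and that $\rho_{E}$ vanishes identically on $E$; hence a net in $\So_{E}$ that was $\tau$-null would be a net in $\So_{F}$ that is both $\tau$-null and $\rho_{E}$-null, contradicting complementarity. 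Therefore $0_{F}\notin\overline{\So_{E}}^{\,\tau}$, which is precisely (i).

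For the concluding statement, the key observation is that $\rho_{\overline{E}}=\rho_{E}$, because $d\left(f,E\right)=d\left(f,\overline{E}\right)$. Thus condition (iii) holds verbatim with $E$ replaced by $\overline{E}$, and applying the already-proven implication (iii)$\Rightarrow$(ii) to the subspace $\overline{E}$ yields that $\tau$ agrees with the norm topology on $\overline{E}$. It then remains to prove that the two closures of $E$ coincide. One inclusion is automatic: $\tau$ being weaker than the norm topology gives $\overline{E}^{\,\mathrm{norm}}\subseteq\overline{E}^{\,\tau}$. The reverse inclusion amounts to showing that $G:=\overline{E}^{\,\mathrm{norm}}$ is $\tau$-closed.

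I expect this last step to be the crux, since it is here, and only here, that completeness and the Hausdorff property of $\tau$ must be used; the equivalences above are soft separation-and-scaling arguments that never ``see'' either. The plan is as follows: given a net in $G$ that is $\tau$-convergent, hence $\tau$-Cauchy, the agreement of $\tau$ with the norm on $G$ (just established) upgrades it to a norm-Cauchy net; invoking completeness (available in the Banach setting of interest, with $G$ a closed subspace) it converges in norm to some $g\in G$, and since norm-convergence implies $\tau$-convergence, Hausdorffness of $\tau$ forces the original $\tau$-limit to equal $g\in G$. This shows $\overline{E}^{\,\tau}\subseteq G$ and completes the argument.
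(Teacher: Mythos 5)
Your proposal is correct, but it is organized differently from the paper's. The paper does not prove the equivalence of (i)--(iii) at all: it cites it from \cite[Proposition 2.6]{erz} and only proves the final claim. Your cycle (i)$\Rightarrow$(ii)$\Rightarrow$(iii)$\Rightarrow$(i) is a sound self-contained replacement — the balanced-neighbourhood rescaling for (i)$\Rightarrow$(ii), the perturbation $e_{p}=f_{p}-(f_{p}-e_{p})$ for (ii)$\Rightarrow$(iii), and the observation that $\rho_{E}$ vanishes on $\So_{E}$ for (iii)$\Rightarrow$(i) all check out. For the last claim you and the paper agree on the first half (both use $\rho_{E}=\rho_{\overline{E}}$ to transfer condition (iii) to $\overline{E}$), but you diverge on $\tau$-closedness: the paper embeds $\widehat{E}$ into both the norm-completion $\widehat{F}$ and the $\tau$-completion $\widehat{F}^{\tau}$ and identifies the $\tau$-closure of $E$ in $F$ with $F\cap\widehat{E}$, whereas you run a direct Cauchy-net argument inside $F$. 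Your version is more elementary, and your remark that the argument lives in the translation-invariant uniformities (so that $\tau$-Cauchy plus agreement of the topologies on the subspace $G$ yields norm-Cauchy, the differences staying in $G$) is exactly the right justification. The one substantive difference is that your closedness step invokes completeness of $G=\overline{E}$, i.e.\ effectively that $F$ is Banach, while the proposition is stated in the preliminaries for a general normed space $F$ and the paper's completion argument is designed precisely to avoid that hypothesis; since every application in the paper has $F$ a Banach lattice, this costs nothing in practice. You are also right that Hausdorffness of $\tau$ is genuinely needed at the very end (the indiscrete topology on $F$ with $E=\{0_{F}\}$ shows the closure claim fails without it); the paper never states this hypothesis but uses it implicitly when treating $\widehat{J}$ as injective on $\widehat{E}$, so flagging it is a point in your favour rather than a gap.
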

\begin{proof}
The equivalence is \cite[Proposition 2.6]{erz}. We now prove the last claim.

Since $\rho_{E}=\rho_{\overline{E}}$ (the norm-closure), it follows that $\rho_{\overline{E}}$ complements $\tau$, and so $\tau$ coincides with the norm topology on $\overline{E}$. It is left to show that $\overline{E}$ is $\tau$-closed. The completion $\widehat{E}$ of $E$ (in either of the topologies) linearly homeomorphically embeds into both norm-completion $\widehat{F}$ and $\tau$-completion $\widehat{F}^{\tau}$, while the embedding $J$ of $\left(F,\|\cdot\|\right)$ into $\left(F,\tau\right)$ extends to a continuous map $\widehat{J}:\widehat{F}\to \widehat{F}^{\tau}$, which is the identity on $\widehat{E}$. Then, the closure of $E$ in $F$ with respect to $\tau$ is $F\cap \widehat{E}=\overline{E}$, hence $\overline{E}$ is $\tau$-closed.
\end{proof}

\begin{corollary}\label{sumeq}
If $\rho$ is a continuous semi-norm on $F$ bounded from below on a subspace $E\subset F$, then $\rho_{E}+\rho$ is equivalent to $\|\cdot\|$.
\end{corollary}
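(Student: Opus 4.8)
The plan is to derive this from Propositions \ref{compl} and \ref{compl2} applied to the linear topology $\tau_\rho$ generated by the seminorm $\rho$ itself, i.e.\ the topology with neighbourhood base at $0_F$ given by the sets $\left\{f\in F:\rho(f)<\eta\right\}$, $\eta>0$. Since $\rho$ is norm-continuous, $\tau_\rho$ is weaker than the norm topology, so the whole apparatus applies. The upper estimate is immediate: $\rho_E(f)=d(f,E)\le\|f\|$ because $0_F\in E$, and continuity of $\rho$ gives $\rho\le M\|\cdot\|$ for some $M>0$; hence $\rho_E+\rho\le(1+M)\|\cdot\|$. Thus everything reduces to a lower bound $\rho_E(f)+\rho(f)\ge a\|f\|$.

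First I would reinterpret the hypothesis. To say that $\rho$ is bounded from below on $E$ means precisely that there is $\varepsilon>0$ with $\rho(e)\ge\varepsilon$ for every $e\in\So_E$; this is exactly the statement that the $\tau_\rho$-neighbourhood $\left\{f:\rho(f)<\varepsilon\right\}$ misses $\So_E$, i.e.\ that $0_F$ is $\tau_\rho$-separated from $\So_E$. That is condition (i) of Proposition \ref{compl2}, so by the equivalence stated there $\rho_E$ complements $\tau_\rho$ on $F$.

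Now I would feed this into Proposition \ref{compl}, taking the seminorm there to be $\rho_E$ and the topology to be $\tau_\rho$. The implication (i)$\Rightarrow$(ii) yields a $\delta>0$ such that $0_F$ is $\tau_\rho$-separated from $\left\{f\in\So_F:\rho_E(f)\le\delta\right\}$; unwinding the $\tau_\rho$-separation produces $\eta>0$ for which the basic neighbourhood $\left\{f:\rho(f)<\eta\right\}$ is disjoint from that set. In other words, for every $f\in\So_F$ either $\rho_E(f)>\delta$ or $\rho(f)\ge\eta$, and in both cases $\rho_E(f)+\rho(f)\ge\min\left\{\delta,\eta\right\}$. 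Homogeneity of $\rho_E+\rho$ then gives $\rho_E(f)+\rho(f)\ge\min\left\{\delta,\eta\right\}\|f\|$ for all $f\in F$, which is the desired lower bound.

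The only genuinely delicate point is the bookkeeping of definitions: one must check that $\tau_\rho$ is an admissible topology for the framework and that ``$\rho$ bounded from below on $E$'' is literally the separation hypothesis (i) of Proposition \ref{compl2}, after which the two propositions do all the work. I note that a direct argument is also available and is essentially what is encoded above: if no lower bound existed one could pick $f_n\in\So_F$ with $\rho_E(f_n),\rho(f_n)\to 0$, approximate each $f_n$ by $e_n\in E$ with $\|f_n-e_n\|\to 0$, and derive $\rho(e_n)\to 0$ while $\|e_n\|\to 1$, contradicting boundedness from below of $\rho$ on $E$.
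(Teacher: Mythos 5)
Your proposal is correct and follows essentially the same route as the paper: both take the topology $\tau$ induced by $\rho$ itself, use Proposition \ref{compl2} to conclude that $\rho_{E}$ complements $\tau$, and then invoke Proposition \ref{compl} to extract the lower bound on $\rho_{E}+\rho$ over $\So_{F}$. The paper merely phrases the last step as ``no normalized net is $\left(\rho_{E}+\rho\right)$-null,'' whereas you unwind condition (ii) of Proposition \ref{compl} into the explicit constant $\min\left\{\delta,\eta\right\}$; the substance is identical.
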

\begin{proof}
It is enough to observe that the topology $\tau$ induced by $\rho$ satisfies the condition (ii) of Proposition \ref{compl2}, hence $\rho_{E}$ is $\tau$-complementary. Thus, there is no normalized net, which is $\left(\rho_{E}+\rho\right)$-null.
\end{proof}

Recall that an operator $T:F\to H$ between Banach spaces is called \emph{tauberian} if $T^{**-1}H\subset F$ and \emph{upper semi-Fredholm (USF)} if it has a closed range and a kernel of finite dimension. Also, $T$ is called \emph{strictly singular (SS)} if it is not bounded below on any infinitely dimensional subspace of $F$. Any compact operator is SS (see \cite[Exercise 4.45]{fhhmz}). On the other hand, if $T$ is SS, and $E\subset F$ is infinitely dimensional, then $\left.T\right|_{E}$ is not USF, and so there is an infinitely dimensional $G\subset E$ such that $\left.T\right|_{G}$ is compact (this follows from \cite[Theorem A.1.9]{gm}).

\section{Dispersed subspaces and disjointly non-singular operators}\label{dnss}

We start with a mostly metric approach to the subject of the paper. Everywhere in this section $F$ is a Banach lattice, $E\subset F$ is a closed subspace, $H$ is a Banach space, and $T\in\Lo\left(F,H\right)$.

Recall that $E$ is said to be \emph{dispersed} if it does not contain an \emph{almost disjoint sequence}, i.e. a sequence $\left(e_{n}\right)_{n\in\N}\subset \So_{E}$, for which there is a disjoint $\left(f_{n}\right)_{n\in\N}\subset F$ such that $\|f_{n}-e_{n}\|\xrightarrow[n\to\8]{} 0$ (note that we may assume that $\left(f_{n}\right)_{n\in\N}\subset \So_{F}$, see \cite[Lemma 2.5]{erz}). This condition is equivalent to the fact that if $\left(f_{n}\right)_{n\in\N}\subset\So_{F}$ is disjoint, then $\liminf\limits_{n\to\8}\rho_{E}\left(f_{n}\right)>0$.

Also recall that $T$ is called \emph{disjointly non-singular (DNS)} operator if its restriction to the span of any disjoint sequence is not strictly singular. The following characterizations are much more convenient. In particular, the characterization (ii) shows that $E$ is dispersed if and only if the quotient map $Q_{E}:F\to F\slash E$ is DNS.

\begin{theorem}[Theorems 2.8 and 2.10, \cite{gmm}]\label{dns} For $T\in\Lo\left(F,H\right)$ the following conditions are equivalent:
\item[(i)] $T$ is DNS;
\item[(ii)] No disjoint sequence in $\So_{F}$ is null with respect to $\rho_{T}$; in other words, $\liminf\limits_{n\to\8}\rho_{T}\left(f_{n}\right)>r$, for every disjoint $\left(f_{n}\right)_{n\in\N}\subset\So_{F}$;
\item[(iii)] If $\left\{f_{n}\right\}_{n\in\N}\subset F$ is disjoint, then the restriction of $T$ is USF on $\overline{\spa}\left\{f_{n}\right\}_{n\in\N}$;
\item[(iv)] $\ker \left(T-S\right)$ is dispersed, for every compact $S:F\to E$, i.e. there is no compact operator that coincides with $T$ on a non-dispersed subspace.
\end{theorem}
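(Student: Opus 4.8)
The plan is to prove the cycle (iii)$\Rightarrow$(i)$\Rightarrow$(ii)$\Rightarrow$(iii) together with the separate equivalence (ii)$\Leftrightarrow$(iv), so that all four conditions collapse to one. Throughout I would exploit two standard features of a normalized disjoint sequence $\left(f_{n}\right)_{n\in\N}$ in a Banach lattice. First, it is a $1$-unconditional basic sequence: disjointness gives $\left|a_{m}\right|=\left\|a_{m}f_{m}\right\|\le\left\|\sum_{n}a_{n}f_{n}\right\|$, so its coordinate functionals have norm $\le1$. Second, a gliding-hump (Bessaga--Pe\l czy\'nski) selection principle holds: every infinite-dimensional subspace of $\overline{\spa}\left\{f_{n}\right\}_{n\in\N}$ contains, up to an arbitrarily small perturbation, a normalized block sequence of $\left(f_{n}\right)$, whose blocks are again pairwise disjoint and whose supports march to infinity. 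The device that drives the whole argument is this: if $\left(f_{n}\right)$ is disjoint and normalized with $\sum_{n}\rho_{T}\left(f_{n}\right)<\8$, then a normalized block $b=\sum_{n\in I}a_{n}f_{n}$ has $\left|a_{n}\right|\le1$, whence $\rho_{T}\left(b\right)\le\sum_{n\in I}\rho_{T}\left(f_{n}\right)\to0$ as $\min I\to\8$.

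For (i)$\Leftrightarrow$(ii) I would argue contrapositively in both directions. If (ii) fails, pass to a subsequence of the offending disjoint sequence with $\rho_{T}$ summable; by the device above every normalized block sequence with supports marching to infinity is $\rho_{T}$-null, and since every infinite-dimensional subspace of $G:=\overline{\spa}\left\{f_{n}\right\}_{n\in\N}$ contains such a block sequence, $T$ is bounded below on no infinite-dimensional subspace of $G$, i.e. $\left.T\right|_{G}$ is strictly singular and $T$ is not DNS. Conversely, if $T$ is not DNS, fix a disjoint $\left(f_{n}\right)$ with $\left.T\right|_{G}$ strictly singular; then $\left.T\right|_{G}$ is bounded below on no tail $\overline{\spa}\left\{f_{n}\right\}_{n\ge N}$, so a gliding-hump extraction produces a normalized disjoint block sequence on which $\rho_{T}\to0$, contradicting (ii).

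The implication (iii)$\Rightarrow$(i) is immediate: a USF operator is bounded below on a finite-codimensional (hence infinite-dimensional) complement of its kernel, so it is not strictly singular. For (ii)$\Rightarrow$(iii) I would show that, for disjoint $\left(f_{n}\right)$, the operator $\left.T\right|_{G}$ has finite-dimensional kernel and closed range. If the kernel were infinite-dimensional it would contain a normalized almost disjoint block sequence $\left(b_{m}\right)$ with $\rho_{T}\left(b_{m}\right)\to0$; and if, writing $G=\ker\left(\left.T\right|_{G}\right)\oplus G_{1}$ with the kernel finite-dimensional, $\left.T\right|_{G_{1}}$ failed to be bounded below, a gliding-hump extraction from a normalized $\rho_{T}$-small sequence in $G_{1}$ would again yield a normalized disjoint block sequence with $\rho_{T}\to0$. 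Either way (ii) is violated, so $\left.T\right|_{G}$ is USF.

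Finally I would establish (ii)$\Leftrightarrow$(iv) by contraposition. If (ii) fails, choose a disjoint normalized $\left(h_{k}\right)$ with $\rho_{T}\left(h_{k}\right)\le4^{-k}$; extending its (norm $\le1$) biorthogonal functionals off $M:=\overline{\spa}\left\{h_{k}\right\}$ by Hahn--Banach to $\widetilde{h}_{k}^{*}\in F^{*}$, the series $S:=\sum_{k}\widetilde{h}_{k}^{*}\left(\cdot\right)Th_{k}$ converges in operator norm (its $k$-th term has norm $\le4^{-k}$), so $S$ is compact and equals $T$ on the non-dispersed subspace $M$; thus (iv) fails. Conversely, if (iv) fails, pick a compact $S$ with $M=\ker\left(T-S\right)$ non-dispersed and an almost disjoint $\left(e_{n}\right)\subset M$ whose disjoint partners $\left(g_{n}\right)$ satisfy $\sum_{n}\left\|e_{n}-g_{n}\right\|<\8$. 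Since $\left.T\right|_{M}=\left.S\right|_{M}$ is compact, hence strictly singular, it is not bounded below, and a gliding-hump extraction yields a normalized block sequence $\left(b_{m}\right)$ of $\left(e_{n}\right)$ with marching supports and $\rho_{T}\left(b_{m}\right)\to0$; replacing each $e_{n}$ by $g_{n}$ gives a disjoint block sequence $\left(\widetilde{b}_{m}\right)$ with $\left\|b_{m}-\widetilde{b}_{m}\right\|\to0$, by summability of the perturbation, and hence a disjoint normalized $\rho_{T}$-null sequence, contradicting (ii). The main obstacle is precisely the passage from the weak hypothesis ``not strictly singular'' to the strong conclusion USF, and, for (iv), the fact that compactness of $S$ does \emph{not} force disjoint sequences to be norm-null (witness the summation functional on $\ell_{1}$); both are circumvented by extracting a single disjoint block sequence whose supports march to infinity and transferring estimates along it, where summable perturbations remain controlled.
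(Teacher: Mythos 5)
The paper does not actually prove Theorem \ref{dns}: it is imported verbatim from \cite{gmm} (Theorems 2.8 and 2.10), so there is no in-paper argument to compare yours against; what follows assesses your proof on its own terms. Your architecture --- the cycle (iii)$\Rightarrow$(i)$\Rightarrow$(ii)$\Rightarrow$(iii) plus (ii)$\Leftrightarrow$(iv), driven by the facts that a normalized disjoint sequence is a $1$-unconditional basic sequence with coordinate functionals of norm at most $1$ and that disjointly supported blocks of a disjoint sequence are again disjoint --- is sound, and most steps are correct: (i)$\Leftrightarrow$(ii), (iii)$\Rightarrow$(i), the finite-dimensionality of the kernel in (ii)$\Rightarrow$(iii), and both halves of (ii)$\Leftrightarrow$(iv) (including the rank-one series construction of the compact $S$ and your correct observation that one cannot argue via weak nullity of disjoint sequences) all go through.

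The one step that fails as written is the closed-range half of (ii)$\Rightarrow$(iii). From ``$\left.T\right|_{G_{1}}$ is not bounded below'' you obtain a normalized sequence $\left(y_{m}\right)\subset G_{1}$ with $\rho_{T}\left(y_{m}\right)\to 0$ and then perform ``a gliding-hump extraction'' on it. But the Bessaga--Pe\l czy\'nski selection principle for \emph{sequences} requires the coordinates $f_{n}^{*}\left(y_{m}\right)$ to tend to $0$ as $m\to\8$, which an arbitrary normalized sequence need not satisfy (all the $y_{m}$ could share a large first coordinate); and your \emph{subspace} version of the principle, applied to $\overline{\spa}\left\{y_{m}\right\}_{m\in\N}$, produces blocks that are combinations of the $y_{m}$ on which you have no control of $\rho_{T}$. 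Two standard repairs: (a) diagonalize so that every coordinate of $\left(y_{m}\right)$ converges, note that $\left(y_{m}\right)$ has no norm-convergent subsequence (a limit would be a unit vector of $\ker \left.T\right|_{G}$ inside $G_{1}$), and apply the gliding hump to a norm-separated sequence of differences $y_{m_{2k}}-y_{m_{2k-1}}$, whose coordinates do vanish in the limit; or (b), cleaner and closer to \cite{gmm}, show directly that (ii) forces $T$ to be bounded below on some tail $\overline{\spa}\left\{f_{n}\right\}_{n\ge m}$ --- otherwise one picks finitely supported normalized witnesses with small $\rho_{T}$ in successive tails, and these are automatically pairwise disjoint, contradicting (ii) --- and then observe that an operator bounded below on a closed finite-codimensional subspace has finite-dimensional kernel and closed range. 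With either repair the proof is complete.
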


We add few more characterizations.

\begin{proposition}\label{dns2} For $T\in\Lo\left(F,H\right)$ the following conditions are equivalent:
\item[(i)] $T$ is DNS;
\item[(ii)] No almost disjoint sequence in $\So_{F}$ is null with respect to $\rho_{T}$;
\item[(iii)] No restriction of $T$ to a non-dispersed subspace is compact;
\item[(iv)] No restriction of $T$ to a non-dispersed subspace is SS.
\end{proposition}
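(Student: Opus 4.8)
The plan is to run the cycle $(i)\Leftrightarrow(ii)\Rightarrow(iv)\Rightarrow(iii)\Rightarrow(i)$, reducing everything to Theorem~\ref{dns} together with standard perturbation theory of basic sequences. First I would dispose of $(i)\Leftrightarrow(ii)$. Since a disjoint sequence in $\So_{F}$ is in particular almost disjoint (take the witnessing sequence to be itself), condition $(ii)$ trivially forces the condition of Theorem~\ref{dns}(ii), giving $(ii)\Rightarrow(i)$. For the converse I would use that $\rho_{T}$ is $\|T\|$-Lipschitz: if $\left(e_{n}\right)\subset\So_{F}$ is almost disjoint with disjoint normalized witness $\left(f_{n}\right)\subset\So_{F}$ (normalized as we may by \cite[Lemma~2.5]{erz}) and $\rho_{T}\left(e_{n}\right)\to0$, then $\left|\rho_{T}\left(e_{n}\right)-\rho_{T}\left(f_{n}\right)\right|\le\|T\|\,\|e_{n}-f_{n}\|\to0$, so $\left(f_{n}\right)$ is a disjoint $\rho_{T}$-null sequence, contradicting Theorem~\ref{dns}(ii). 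Hence $(i)\Rightarrow(ii)$.

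The two ``downward'' implications are immediate. For $(iv)\Rightarrow(iii)$ I would only note that every compact operator is SS, so a non-dispersed subspace on which $T$ is compact is also one on which $T$ is SS. For $(iii)\Rightarrow(i)$ I would argue contrapositively through the reformulation of Theorem~\ref{dns}(iv): if $T$ is not DNS there is a compact $S$ coinciding with $T$ on a non-dispersed subspace $E$, and then $T|_{E}=S|_{E}$ is compact, negating $(iii)$.

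The real work is $(i)\Rightarrow(iv)$, which I would prove contrapositively in the sharper form: if $E$ is non-dispersed then $T|_{E}$ is not SS. Pick an almost disjoint $\left(e_{n}\right)\subset\So_{E}$ with disjoint witness $\left(f_{n}\right)\subset\So_{F}$, $\|e_{n}-f_{n}\|\to0$, and pass to a subsequence with $\sum_{n}\|e_{n}-f_{n}\|$ as small as needed. A normalized disjoint sequence in a Banach lattice is a $1$-unconditional basic sequence, so by the principle of small perturbations $\left(e_{n}\right)$ is also basic and the correspondence $\Theta\colon e_{n}\mapsto f_{n}$ extends to an isomorphism of $\overline{\spa}\left\{e_{n}\right\}$ onto $\overline{\spa}\left\{f_{n}\right\}$; moreover $\Theta$ differs from the inclusion $\overline{\spa}\left\{e_{n}\right\}\hookrightarrow F$ by the operator $e_{n}\mapsto f_{n}-e_{n}$, which is compact because $\sum_{n}\|f_{n}-e_{n}\|<\8$. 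By Theorem~\ref{dns}(iii) the restriction of $T$ to $\overline{\spa}\left\{f_{n}\right\}$ is USF; composing with $\Theta$ and absorbing the compact discrepancy — USF operators are stable under compact perturbations — shows that $T$ is USF on $\overline{\spa}\left\{e_{n}\right\}\subset E$. A USF operator is bounded below on a finite-codimensional, hence infinite-dimensional, subspace, so $T$ is bounded below on an infinite-dimensional subspace of $E$; therefore $T|_{E}$ is not SS, as required.

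I expect the perturbation step to be the only delicate point: one must choose the subsequence so that $\left(e_{n}\right)$ is genuinely basic and equivalent to $\left(f_{n}\right)$ and so that $e_{n}\mapsto f_{n}-e_{n}$ is compact, for which the summability $\sum_{n}\|e_{n}-f_{n}\|<\8$ (after passing to a subsequence) together with the $1$-unconditional basic character of the disjoint witness is exactly what is needed. Once this isomorphism-modulo-compact is in hand, the conclusion is pure upper semi-Fredholm bookkeeping, and the remaining implications are formal.
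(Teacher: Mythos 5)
Your proposal is correct, but it reaches the two substantive implications by a genuinely different route than the paper. The paper proves (i)$\Rightarrow$(iii) directly: using Theorem~\ref{dns}(iii) it extracts a tail on which $T$ is bounded below by some $\delta$ on $\overline{\spa}\left\{f_{n}\right\}$, and a three-term triangle inequality then shows $\|Te_{n}-Te_{k}\|\ge\frac{\delta}{3}$, so $\left\{Te_{n}\right\}$ is uniformly separated and $\left.T\right|_{E}$ cannot be compact; it then deduces (iii)$\Rightarrow$(iv) from \cite[Lemma 3.3]{erz} (every infinite-dimensional subspace of the closed span of a rapidly almost disjoint sequence again contains an almost disjoint sequence) together with the fact that an SS restriction is compact on some further infinite-dimensional subspace. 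You instead prove (i)$\Rightarrow$(iv) in one shot via the principle of small perturbations: after passing to a subsequence with $\sum_{n}\|e_{n}-f_{n}\|$ small, $\left(e_{n}\right)$ is basic and equivalent to the $1$-unconditional disjoint witness, the equivalence is a nuclear (hence compact) perturbation of the inclusion, and the USF property of $T$ on $\overline{\spa}\left\{f_{n}\right\}$ (Theorem~\ref{dns}(iii)) transfers to $\overline{\spa}\left\{e_{n}\right\}\subset E$ by stability of USF under compact perturbations, so $T$ is bounded below on an infinite-dimensional subspace of $E$. Both arguments are sound; yours leans on classical Banach-space machinery (perturbation of basic sequences, Fredholm stability) and bypasses the lemma from \cite{erz}, and as a byproduct gives the slightly stronger conclusion that $T$ is USF on the closed span of any almost disjoint sequence, whereas the paper's (i)$\Rightarrow$(iii) step is a more elementary, self-contained metric estimate. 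The remaining implications ((i)$\Leftrightarrow$(ii) via the Lipschitz property of $\rho_{T}$ and normalization of the witness, (iv)$\Rightarrow$(iii) from compact $\Rightarrow$ SS, and (iii)$\Rightarrow$(i) via Theorem~\ref{dns}(iv)) coincide with the paper's in substance.
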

\begin{proof}
(iii)$\Rightarrow$(i)$\Leftrightarrow$(ii) easily follow from Theorem \ref{dns}. (iv)$\Rightarrow$(iii) follows from the fact that every compact operator is SS.

(i)$\Rightarrow$(iii): Without loss of generality we may assume $\|T\|\le 1$. Suppose that there is a non-dispersed subspace $E$ of $F$ such that $\left.T\right|_{E}$ is compact. Then, there is a disjoint sequence  $\left(f_{n}\right)_{n\in\N}\subset \So_{F}$, and a sequence  $\left(e_{n}\right)_{n\in\N}\subset \So_{E}$ such that $\|f_{n}-e_{n}\|\to 0$. According to Theorem \ref{dns} the restriction of $T$ to $\overline{\spa}\left\{f_{n}\right\}_{n\in\N}$ is USF. Let $m\in\N$ and $\delta>0$ be such that the restriction of $T$ to $\overline{\spa}\left\{f_{n}\right\}_{n\ge m}$ is bounded from below by $\delta$ (see the argument in the beginning of the proof of \cite[Theorems 2.8]{gmm}). Passing to a further tail if needed we may assume that $\|f_{n}-e_{n}\|<\frac{\delta}{3}$, when $n\ge m$. Then, for $n,k\ge m$ we have
\begin{align*}
\|Te_{n}-Te_{k}\|&\ge \|T\left(f_{n}-f_{k}\right)\|-\|T\left(e_{n}-f_{n}\right)\|-\|T\left(e_{k}-f_{k}\right)\|\\
&\ge \delta\|f_{n}-f_{k}\|-\|e_{n}-f_{n}\|-\|e_{k}-f_{k}\|\ge \delta\|\left|f_{n}\right|+\left|f_{k}\right|\|-\frac{2\delta}{3}\ge \frac{\delta}{3}.
\end{align*}
This contradicts the fact that $\left\{Te_{n}\right\}_{n\ge m}\subset T\Bob_{E}$ is relatively compact.\medskip

(iii)$\Rightarrow$(iv): Let $E$ be a non-dispersed subspace $E$ of $F$. There is a disjoint sequence  $\left(f_{n}\right)_{n\in\N}\subset \So_{F}$, and a sequence  $\left(e_{n}\right)_{n\in\N}\subset \So_{E}$ such that $\|f_{n}-e_{n}\|\le\frac{1}{2^{n}}$, for every $n\in\N$. It can be deduced from \cite[Lemma 3.3]{erz} that any infinitely dimensional subspace of $G:=\overline{\spa}\left\{e_{n}\right\}_{n\in\N}$ contains an almost disjoint sequence, and so no restriction of $T$ to such a subspace is compact, according to (iii). This means that $\left.T\right|_{E}$ cannot be SS.
\end{proof}

\begin{proposition}\label{taub}
Every dispersed subspace of $F$ is finitely dimensional / reflexive if and only if every DNS operator from $F$ is upper semi-Fredholm / tauberian.
\end{proposition}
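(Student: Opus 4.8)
My plan is to prove the two equivalences by a common template: the two ``if'' directions come from the quotient maps, and the two ``only if'' directions from a localization of the target class of operators. For the ``if'' directions, suppose every DNS operator out of $F$ is USF (resp.\ tauberian) and let $E\subset F$ be dispersed. The remark preceding Theorem \ref{dns} gives that $Q_{E}:F\to F\slash E$ is DNS, so by hypothesis it is USF (resp.\ tauberian). Since $Q_{E}$ is a quotient map it is onto with closed range and $\ker Q_{E}=E$, whence USF forces $\dim E<\8$. For the reflexive case I would compute the residuum of $Q_{E}$: under the identifications $(F\slash E)^{**}=F^{**}\slash E^{\perp\perp}$ with $Q_{E}^{**}$ the canonical quotient, one gets $Q_{E}^{**-1}(F\slash E)=F+E^{\perp\perp}$, so $Q_{E}$ is tauberian iff $E^{\perp\perp}\subset F$, i.e.\ $E^{\perp\perp}=E$, i.e.\ $E$ is reflexive. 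Thus the hypothesis forces $E$ finite-dimensional (resp.\ reflexive).

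For the ``only if'' direction in the USF case I argue by contradiction using the localization recalled in the preliminaries. If a DNS operator $T$ is not USF, then (applying the consequence of \cite[Theorem A.1.9]{gm} with $E=F$) there is an infinite-dimensional $G\subset F$ with $T|_{G}$ compact. By Proposition \ref{dns2}(iii) no restriction of a DNS operator to a non-dispersed subspace is compact, so $G$ is dispersed, hence finite-dimensional by hypothesis --- a contradiction. Therefore every DNS operator is USF.

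The tauberian ``only if'' is the substantial part. I would use the characterization of tauberian operators as those that are weakly compact on no non-reflexive subspace. Assuming every dispersed subspace is reflexive, let $T$ be DNS and suppose $T$ is not tauberian; fix a non-reflexive $M$ with $T|_{M}$ weakly compact. By hypothesis $M$ is non-dispersed, and here DNS bites: if $(e_{j})\subset M$ is almost disjoint, with disjoint $(f_{j})\subset F$ and $\|e_{j}-f_{j}\|\to 0$, then by Theorem \ref{dns}(iii) and a perturbation $T$ is bounded below on a tail of $\overline{\spa}\{e_{j}\}$, and a bounded-below restriction of a weakly compact operator has reflexive domain; hence every almost disjoint sequence in $M$ spans a reflexive tail. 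Now run Rosenthal's dichotomy on a bounded sequence in $M$ with no weakly convergent subsequence. If it has an $\ell_{1}$-subsequence, its closed span $S\cong\ell_{1}$ is non-reflexive: were $S$ dispersed this would contradict the hypothesis, and were it non-dispersed the previous observation would produce an infinite-dimensional reflexive subspace of $\ell_{1}$, which is impossible; either way we reach a contradiction.

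The remaining case --- a weakly Cauchy, non-weakly-convergent sequence --- is where I expect the genuine difficulty to sit. Its closed span $W$ is non-reflexive, hence non-dispersed, but unlike $\ell_{1}$ it may contain infinite-dimensional reflexive subspaces, so the fact that the almost disjoint sequences of $W$ span reflexive subspaces is no longer immediately contradictory. To close the argument one must rule out a non-reflexive, non-dispersed subspace all of whose almost disjoint sequences span reflexive subspaces; concretely I would try to extract from $W$ a normalized almost disjoint sequence that is $\rho_{T}$-null (contradicting Proposition \ref{dns2}(ii)), equivalently to exhibit inside $W$ a non-reflexive dispersed subspace (contradicting the hypothesis). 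I expect this extraction to require either a sharper tauberian restriction theorem in the spirit of \cite{gm} or a disjointification argument exploiting the lattice structure of $F$, and this is the main obstacle of the proof.
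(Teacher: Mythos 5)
Your sufficiency direction (via the quotient map $Q_{E}$) and your USF necessity argument are correct and essentially match the paper's. The genuine gap is exactly where you locate it: the tauberian necessity direction, and the problem is your choice of localization. You localize non-tauberian-ness as ``there is a non-reflexive $M$ with $T|_{M}$ \emph{weakly} compact,'' but a weakly compact restriction is not something the DNS property can see directly (Proposition \ref{dns2} only forbids \emph{compact} or SS restrictions to non-dispersed subspaces). That is why the weakly Cauchy branch of Rosenthal's dichotomy leaves you stranded: a non-reflexive, non-dispersed $W$ all of whose almost disjoint sequences span reflexive subspaces is not obviously contradictory, and you do not rule it out.

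The missing ingredient is the Gonz\'alez--Onieva characterization, \cite[Theorem 2.27]{gm}: $T$ is tauberian if and only if $\ker\left(T-K\right)$ is reflexive for every compact $K$. This is what the paper uses, and it closes the argument in one line: by Theorem \ref{dns}(iv), a DNS operator has $\ker\left(T-K\right)$ dispersed for every compact $K$, hence reflexive by hypothesis, hence $T$ is tauberian (and the USF case is handled in parallel via \cite[Theorem A.1.9]{gm}). Alternatively, the same theorem repairs your localization: if $T$ is not tauberian, then $M:=\ker\left(T-K\right)$ is a non-reflexive subspace on which $T|_{M}=K|_{M}$ is \emph{compact}, not merely weakly compact; with this strengthened statement your own USF argument applies verbatim (Proposition \ref{dns2}(iii) forces $M$ to be dispersed, hence reflexive, a contradiction), and no Rosenthal dichotomy is needed.
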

\begin{proof}
Necessity: Let $T:F\to H$ be a DNS operator. By Theorem \ref{dns} for every compact operator $S:F\to H$, $\ker \left(T-S\right)$ is dispersed, hence finitely dimensional / reflexive. It now follows from \cite[theorems A.1.9 and 2.27]{gm} that $T$ is upper semi-Fredholm / tauberian.

Sufficiency follows from the fact that a quotient with respect to $E$ is USF / tauberian / DNS iff $E$ is finitely dimensional / reflexive / dispersed (in the first case this is easy to see, for the second case see \cite[Proposition 2.1.4]{gm}, the last case was already addressed).
\end{proof}

It was established in \cite[Corollary 2.11]{gmm} that if $F$ is atomic and order continuous, then all dispersed subspaces in $F$ are of finite dimension. It was also proven in \cite[Theorem 4.5(2)]{gm2} (we will provide an alternative proof in Theorem \ref{disref}) that if $F$ is merely order continuous, then all dispersed subspaces in $F$ are reflexive. It is easy to see that if $F$ contains a closed sublattice of finite co-dimension which is (atomic and) order continuous, the conclusions are the same.

\begin{question}\label{tayl}
Characterize Banach lattices (especially order continuous ones) in which all dispersed subspaces are of finite dimension or reflexive.
\end{question}

The following result is a generalization of \cite[Proposition 1]{as}.

\begin{proposition}\label{caub}
The following conditions are equivalent:
\item[(i)] Every reflexive subspace of $F$ is dispersed;
\item[(ii)] Every tauberian operator from $F$ is DNS;
\item[(iii)] The closed span of every almost disjoint sequence is non-reflexive;
\item[(iv)] The closed span of every disjoint sequence is non-reflexive.
\end{proposition}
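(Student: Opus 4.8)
The plan is to establish the four equivalences by reducing the operator-theoretic conditions (i), (ii) to the subspace conditions (iii), (iv) through the quotient-map dictionary of Proposition \ref{taub}, and then to pass between disjoint and almost disjoint sequences by a perturbation argument. The equivalence (i)$\Leftrightarrow$(iii) is immediate from the definitions: since every closed subspace of a reflexive space is reflexive, if $(e_n)$ is almost disjoint then $G:=\overline{\spa}\{e_n\}$ is non-dispersed, and (i) (in the contrapositive form ``non-dispersed $\Rightarrow$ non-reflexive'') forces $G$ to be non-reflexive, giving (iii); conversely, a reflexive non-dispersed subspace would contain an almost disjoint sequence whose closed span is reflexive, contradicting (iii). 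The implication (ii)$\Rightarrow$(i) is equally direct: for reflexive $E$ the quotient $Q_{E}$ is tauberian by Proposition \ref{taub}, hence DNS by (ii), hence $E$ is dispersed, again by Proposition \ref{taub}.

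The substantive direction is (i)$\Rightarrow$(ii), which I would argue by contradiction using Proposition \ref{dns2}(iii): were a tauberian $T$ not DNS, there would be a non-dispersed closed subspace $E$ with $\left.T\right|_{E}$ compact. The key lemma is that \emph{a tauberian operator restricted to a subspace on which it is compact has reflexive domain}. I would prove this by a biadjoint computation: writing $\left.T\right|_{E}=T\circ\iota$ for the inclusion $\iota:E\to F$, compactness of $\left.T\right|_{E}$ gives $\left(\left.T\right|_{E}\right)^{**}E^{**}\subset H$, so $T^{**}\left(\iota^{**}E^{**}\right)\subset H$; since $T$ is tauberian this yields $\iota^{**}E^{**}=E^{\perp\perp}\subset F$, and because $E^{\perp\perp}\cap F=E$ we conclude $E^{\perp\perp}=E$, i.e. $E$ is reflexive (alternatively this may be cited from \cite{gm}). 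Then (i) forces $E$ to be dispersed, contradicting its choice.

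It remains to prove (iii)$\Leftrightarrow$(iv). Since a disjoint sequence is almost disjoint, (iii)$\Rightarrow$(iv) is trivial. For the converse, given an almost disjoint $(e_n)\subset\So_{F}$, choose a disjoint $(f_n)\subset\So_{F}$ with $\|e_n-f_n\|\to 0$ (normalizing via \cite[Lemma 2.5]{erz}). The normalized disjoint sequence $(f_n)$ is $1$-unconditional, since for disjoint elements $\left|\sum a_i f_i\right|=\sum|a_i|\,|f_i|$, so suppressing coordinates decreases the modulus and hence the norm; in particular it is basic with basis constant $1$. Passing to a subsequence with $\sum_{k}\|e_{n_k}-f_{n_k}\|<\tfrac12$, the principle of small perturbations makes $(e_{n_k})$ basic and equivalent to $(f_{n_k})$, so $\overline{\spa}\{e_{n_k}\}\cong\overline{\spa}\{f_{n_k}\}$. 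The latter is the closed span of a disjoint sequence, hence non-reflexive by (iv); as $\overline{\spa}\{e_{n_k}\}$ is a closed subspace of $\overline{\spa}\{e_n\}$, the latter is non-reflexive as well.

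The main obstacle I anticipate is the tauberian lemma in (i)$\Rightarrow$(ii) --- making the identification $\iota^{**}E^{**}=E^{\perp\perp}$ and the equality $E^{\perp\perp}\cap F=E$ precise --- together with verifying the quantitative hypotheses of the small-perturbation principle in (iv)$\Rightarrow$(iii); both are standard but require care, and everything else is bookkeeping against Propositions \ref{taub} and \ref{dns2}.
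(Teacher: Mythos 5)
Your proof is correct, and its core ingredients coincide with the paper's: the quotient-map dictionary of Proposition \ref{taub} for the operator--subspace translation, and the principle of small perturbations applied to a normalized disjoint (hence $1$-unconditional, hence basic) sequence for the passage between disjoint and almost disjoint spans. The organization differs in two ways worth noting. First, the paper proves (i)$\Leftrightarrow$(ii) exactly as in Proposition \ref{taub}, i.e.\ via the characterization ``$T$ is tauberian iff $\ker(T-S)$ is reflexive for every compact $S$'' combined with Theorem \ref{dns}(iv), citing \cite{gm} for the tauberian side; you instead route (i)$\Rightarrow$(ii) through Proposition \ref{dns2}(iii) and prove the needed fact --- that a tauberian operator compact on $E$ forces $E$ reflexive --- from scratch by the biadjoint computation $T^{**}(\iota^{**}E^{**})\subset H$, $\iota^{**}E^{**}=E^{\perp\perp}$, $E^{\perp\perp}\cap F=E$. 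That computation is sound (Gantmacher gives $(T|_{E})^{**}E^{**}\subset H$, and the bipolar identification is standard), and it makes your argument self-contained where the paper defers to \cite[Theorem 2.27]{gm}; the two are really the same fact packaged differently, since $\ker(T-S)$ is precisely a subspace on which $T$ restricts to a compact operator. Second, the paper closes the cycle as (i)$\Rightarrow$(iii)$\Rightarrow$(iv)$\Rightarrow$(i), doing the perturbation argument once in (iv)$\Rightarrow$(i), whereas you prove (i)$\Leftrightarrow$(iii) directly and put the perturbation work into (iv)$\Rightarrow$(iii); this is purely cosmetic. Your quantitative hypothesis $\sum_{k}\|e_{n_k}-f_{n_k}\|<\tfrac12$ with basis constant $1$ matches the form of \cite[Theorem 1.3.9]{ak}, and since $\|e_n-f_n\|\to 0$ one can always extract a subsequence meeting it, so no gap there.
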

\begin{proof}
(i)$\Leftrightarrow$(ii) if proven in the same way as Proposition \ref{taub}. (i)$\Rightarrow$(iii) follows from the fact that the span of an almost disjoint sequence is not dispersed. (iii)$\Rightarrow$(iv) is trivial.

(iv)$\Rightarrow$(i): Assume that $H$ is a reflexive subspace which is not dispersed. Then there is $\left(h_{n}\right)_{n\in\N}\subset\So_{H}$ and a disjoint $\left(f_{n}\right)_{n\in\N}\subset\So_{F}$ such that $\sum\limits_{n\in\N}\|h_{n}-f_{n}\|<1$. It then follows from \cite[Theorem 1.3.9]{ak} that $\overline{\spa}\left\{f_{n}\right\}_{n\in\N}$ is linearly isomorphic to $\overline{\spa}\left\{h_{n}\right\}_{n\in\N}\subset H$, which is reflexive, contradicting (iv).
\end{proof}

A class of Banach lattices which satisfies these conditions will be considered in Proposition \ref{psp}. Somewhat analogous to the problems considered in \cite{flt} are the following questions.

\begin{question}
Is it enough to only consider positive sequences in (iv) (this condition is then equivalent to finite dimension of all reflexive sublattices)? Do these conditions imply that every disjoint sequence has a subsequence which is isomorphic to the basis of either $\ell_{1}$ or $c_{0}$?
\end{question}

\section{$n$-dispersed subspaces and $n$-DNS operators}\label{dnsn}

For $n\in\N\backslash\left\{1\right\}\cup\left\{\8\right\}$ we call $T$ \emph{$n$-DNS} if there is $r>0$ such that there is no disjoint $n$-tuple $\left\{f_{k}\right\}_{k=1}^{n}\subset\So_{F}$ such that $\rho_{T}\left(f_{k}\right)\le r$, for every $k=1,...,n$. Clearly, this condition is stronger the smaller $n$ is. It is easy to see that $T$ is $\8$-DNS if and only if there is $r>0$ such that $\liminf\limits_{n\to\8}\rho_{T}\left(f_{n}\right)>r$, for every disjoint $\left(f_{n}\right)_{n\in\N}\subset\So_{F}$; in particular, in this case $T$ is DNS. Note that $\8$-DNS operators were called strictly DNS in \cite{erz}.

Elements $e,f\in F$ are $\varepsilon$\emph{-disjoint} if $\|\left|e\right|\wedge \left|f\right|\|<\varepsilon$. A subset of $F$ is $\varepsilon$\emph{-disjoint} if every two of its distinct elements are $\varepsilon$-disjoint. The following result is \cite[Proposition 5.2]{gm2}, but the proof contains a minor error, and so we provide a corrected proof for the reader's convenience.

\begin{proposition}\label{nrdns}The following conditions are equivalent:
\item[(i)] There is no disjoint $n$-tuple $\left\{f_{k}\right\}_{k=1}^{n}\subset\So_{F}$ such that $\rho_{T}\left(f_{k}\right)< r$, for every $k=1,...,n$;
\item[(ii)] For every $\varepsilon>0$ there is $\delta>0$ such that there is no $\delta$-disjoint $n$-tuple $\left\{f_{k}\right\}_{k=1}^{n}\subset\So_{F}$ such that $\rho_{T}\left(f_{k}\right)\le r-\varepsilon$, for every $k=1,...,n$.
\end{proposition}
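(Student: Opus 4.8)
The plan is to prove the equivalence in Proposition \ref{nrdns} by showing both directions, with the interesting content lying in the implication (i)$\Rightarrow$(ii). The statement essentially says that being ``truly disjoint with $\rho_T$-values $<r$'' being impossible is equivalent to a quantitatively robust version where ``truly disjoint'' is relaxed to ``$\delta$-disjoint'' at the cost of shrinking the threshold from $r$ to $r-\varepsilon$. The natural mechanism is a disjointification argument: a $\delta$-disjoint $n$-tuple should be approximable, with error controlled by $\delta$, by a genuinely disjoint $n$-tuple, after which condition (i) applies.

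First I would dispose of the easy direction (ii)$\Rightarrow$(i). Suppose (ii) holds and suppose toward contradiction that there is a genuinely disjoint $n$-tuple $\left\{f_{k}\right\}_{k=1}^{n}\subset\So_{F}$ with $\rho_{T}\left(f_{k}\right)<r$ for all $k$. Set $\varepsilon:=r-\max_{k}\rho_{T}\left(f_{k}\right)>0$ and take the corresponding $\delta>0$ from (ii). But a disjoint tuple is $\delta$-disjoint for every $\delta>0$ (since $\left|f_{j}\right|\wedge\left|f_{k}\right|=0$), and $\rho_{T}\left(f_{k}\right)\le r-\varepsilon$ by construction, directly contradicting (ii). So (i) holds.

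For (i)$\Rightarrow$(ii), fix $\varepsilon>0$; I would argue that if $\delta$ is chosen small enough, any $\delta$-disjoint $n$-tuple $\left\{f_{k}\right\}_{k=1}^{n}\subset\So_{F}$ with $\rho_{T}\left(f_{k}\right)\le r-\varepsilon$ can be perturbed into a genuinely disjoint $n$-tuple violating (i). The standard lattice tool here is to replace each $f_{k}$ by its ``excess'' over the others, e.g. $g_{k}:=\left(\left|f_{k}\right|-\sum_{j\neq k}\left|f_{j}\right|\right)^{+}\operatorname{sgn}$-twisted to follow $f_{k}$, or more cleanly the band-projection / subtraction construction: since the overlaps $\left|f_{j}\right|\wedge\left|f_{k}\right|$ all have norm $<\delta$, subtracting them off produces a disjoint family $\left\{g_{k}\right\}$ with $\|f_{k}-g_{k}\|$ bounded by a quantity of order $(n-1)\delta$. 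Each $g_{k}$ then has norm close to $1$ (within $(n-1)\delta$ of it) and, using $\|T\|$-Lipschitz continuity of $\rho_{T}$, satisfies $\rho_{T}\left(g_{k}/\|g_{k}\|\right)\le r-\varepsilon + C\delta$ for a constant $C$ depending only on $\|T\|$ and $n$. Choosing $\delta$ small enough that $C\delta<\varepsilon$ forces the normalized disjoint tuple to have all $\rho_T$-values strictly below $r$, contradicting (i). This is where the ``minor error'' in the original proof presumably lay, so care is needed in the bookkeeping.

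The main obstacle, and the step I would treat most carefully, is the disjointification estimate: controlling $\|f_{k}-g_{k}\|$ and the resulting perturbation of both the norm and the $\rho_T$-value purely in terms of $\delta$, $n$, and $\|T\|$, \emph{uniformly} over all admissible tuples. The subtlety is that naive pairwise subtraction of the meets $\left|f_{j}\right|\wedge\left|f_{k}\right|$ may not immediately yield a pairwise-disjoint family, since removing the overlap between $f_j$ and $f_k$ can interact with the overlap involving a third index; one must subtract $\bigvee_{j\neq k}(\left|f_{k}\right|\wedge\left|f_{j}\right|)$ (or the supremum of the pairwise meets) from $\left|f_{k}\right|$ and verify genuine disjointness of the results, while keeping the total norm of what is subtracted below $(n-1)\delta$. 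Once this uniform perturbation bound is in hand, the rest is the routine continuity-of-$\rho_T$ and renormalization argument sketched above, and the choice of $\delta=\delta(\varepsilon)$ follows by making $C\delta<\varepsilon$.
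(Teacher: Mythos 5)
Your proposal is correct and follows essentially the same route as the paper: the paper's disjointification is exactly the construction you describe, namely $e_{k}^{\pm}:=\bigl(f_{k}^{\pm}-\bigvee_{j\ne k}\left|f_{j}\right|\bigr)^{+}$ with $e_{k}:=e_{k}^{+}-e_{k}^{-}$, which yields a genuinely disjoint tuple with $\|f_{k}-e_{k}\|\le(n-1)\delta$, and the bookkeeping you outline leads to the same choice $\delta=\frac{\varepsilon}{(n-1)(r+\|T\|)}$. The only cosmetic difference is that the paper applies (i) to the $e_{k}$ directly and transfers the lower bound back to $\|Tf_{k}\|$ rather than normalizing the perturbed vectors, but this is the same argument.
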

\begin{proof}
Only (i)$\Rightarrow$(ii) requires a proof. Given $\varepsilon>0$, we will show that $\delta:=\frac{\varepsilon}{\left(n-1\right)\left(r+\|T\|\right)}$ fulfills the requirements.

Assume that $\left\{f_{1},...,f_{n}\right\}\subset\So_{F}$ is $\delta$-disjoint. Define $e_{k}^{\pm}:=\left(f_{k}^{\pm}-\bigvee\limits_{j\ne k}\left|f_{j}\right|\right)^{+}$, and $e_{k}:=e_{k}^{+}-e_{k}^{-}$. Since $f_{k}^{+}\bot f_{k}^{-}$, we have $e_{k}^{+}\bot e_{k}^{-}$, hence
\begin{align*}
\left|e_{k}\right|&=e_{k}^{+}\vee e_{k}^{-}=\left(f_{k}^{+}-\bigvee\limits_{j\ne k}\left|f_{j}\right|\right)^{+}\vee \left(f_{k}^{-}-\bigvee\limits_{j\ne k}\left|f_{j}\right|\right)^{+}\\&=\left(f_{k}^{+}\vee f_{k}^{-}-\bigvee\limits_{j\ne k}\left|f_{j}\right|\right)^{+}
=\left(\left|f_{k}\right|-\bigvee\limits_{j\ne k}\left|f_{j}\right|\right)^{+}=\left|f_{k}\right|-\left|f_{k}\right|\wedge \bigvee\limits_{j\ne k}\left|f_{j}\right|,
\end{align*}
for every $k=1,...,n$. Therefore, $\left\{e_{1},...,e_{n}\right\}$ is a disjoint set with $1=\|f_{k}\|\ge \|e_{k}\|$, for every $k$, and similarly $$\|f_{k}-e_{k}\|=\left\|\left|f_{k}^{+}-e_{k}^{+}\right|\vee \left|f_{k}^{-}-e_{k}^{-}\right|\right\|\le \sum\limits_{j\ne k}\left\|\left|f_{k}\right|\wedge \left|f_{j}\right|\right\|\le \left(n-1\right)\delta.$$

It follows from (i) that there is $k\in 1,...,n$ such that $\|Te_{k}\|\ge r\|e_{k}\|\ge r\left(1-\left(n-1\right)\delta\right)$, hence $$\|Tf_{k}\|\ge \|Te_{k}\|-\|T\|\|f_{k}-e_{k}\|\ge r\left(1-\left(n-1\right)\delta\right)-\left(n-1\right)\delta\|T\|=r-\left(n-1\right)\delta\left(r+\|T\|\right)=r-\varepsilon.$$
\end{proof}

Let us introduce the subspace counterpart to $n$-DNS operators. We call $E$ \emph{$n$-dispersed}, for $n\in\N\backslash\left\{1\right\}\cup\left\{\8\right\}$, if there is $r>0$ such that there is no disjoint $\left\{f_{k}\right\}_{k=1}^{n}\subset\So_{F}$ at the distance less than $r$ from $E$. It is easy to see that this condition is equivalent to the fact that the quotient map $Q_{E}:F\to F\slash E$ is $n$-DNS. The interrelationships between the introduced concepts mirror that between the variants of the DNS property.

\begin{question}
Are there versions of the characterizations (iv) from Theorem \ref{dns} and (iii) and (iv) from Proposition \ref{dns2} for $n$-DNS operators?
\end{question}

Let us show that being $n$-dispersed is equivalent to containing no $\varepsilon$-disjoint $n$-tuples, for small enough $\varepsilon$.

\begin{proposition}\label{ndisp}
\item[(i)] If $E$ contains no $t$-disjoint normalized $n$-tuples, then there is no disjoint $n$-tuples at the distance $\frac{t}{6}$ (or less) from $E$.
\item[(ii)] If there are no disjoint $n$-tuples at the distance $r$ from $E$, then it contains no $t$-disjoint normalized $n$-tuples, for every $t<\frac{r}{\left(n-1\right)\left(r+1\right)}$.
\end{proposition}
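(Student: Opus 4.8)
The plan is to prove both statements by contraposition, the key tool being the elementary lattice estimate that for any $x,y,u,v\in F$ one has $\big|\,|x|\wedge|y|-|u|\wedge|v|\,\big|\le|x-u|+|y-v|$ (which follows from the facts that $x\mapsto|x|$ and $a\mapsto a\wedge c$ are both contractions for the modulus), together with the disjointification procedure already carried out in the proof of Proposition \ref{nrdns}.

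For (i), I would argue contrapositively: suppose there is a disjoint $n$-tuple $\{f_{k}\}_{k=1}^{n}\subset\So_{F}$ with $d(f_{k},E)\le\frac{t}{6}$ for each $k$, and produce a $t$-disjoint normalized $n$-tuple in $E$. Fixing a small $\eta>0$, pick $e_{k}\in E$ with $\|f_{k}-e_{k}\|\le\frac{t}{6}+\eta$. Since $|f_{i}|\wedge|f_{j}|=0$ for $i\ne j$, the lattice estimate above gives $\big\||e_{i}|\wedge|e_{j}|\big\|\le\|f_{i}-e_{i}\|+\|f_{j}-e_{j}\|\le\frac{t}{3}+2\eta$. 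The $e_{k}$ need not be normalized, but $\|e_{k}\|\ge1-\frac{t}{6}-\eta$; setting $\hat e_{k}:=e_{k}/\|e_{k}\|$ and using $|\hat e_{i}|\wedge|\hat e_{j}|\le\frac{1}{\min(\|e_{i}\|,\|e_{j}\|)}\,(|e_{i}|\wedge|e_{j}|)$ yields $\big\||\hat e_{i}|\wedge|\hat e_{j}|\big\|\le\frac{t/3+2\eta}{1-t/6-\eta}$. As $\eta\to0$ the right-hand side tends to $\frac{t/3}{1-t/6}$, which is strictly less than $t$ for $0<t<4$ (and one checks $\|e_{k}\|>0$ throughout); hence for small $\eta$ the normalized tuple $\{\hat e_{k}\}\subset\So_{E}$ is $t$-disjoint, as required. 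Distinctness of the $\hat e_{k}$ is automatic, since disjoint normalized vectors lie at mutual distance at least $1$.

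For (ii), I would again argue contrapositively: given a $t$-disjoint normalized $n$-tuple $\{g_{k}\}_{k=1}^{n}\subset\So_{E}$ with $t<\frac{r}{(n-1)(r+1)}$, I would disjointify it exactly as in the proof of Proposition \ref{nrdns}, obtaining a disjoint tuple $\{\tilde g_{k}\}$ with $\|\tilde g_{k}\|\le1$ and $\|g_{k}-\tilde g_{k}\|\le\sum_{j\ne k}\big\||g_{k}|\wedge|g_{j}|\big\|<(n-1)t$. Writing $s_{k}:=\|\tilde g_{k}\|\in(1-(n-1)t,1]$ and normalizing to $\hat{\tilde g}_{k}:=\tilde g_{k}/s_{k}$ (still disjoint and normalized), the element $g_{k}/s_{k}\in E$ witnesses $d(\hat{\tilde g}_{k},E)\le\frac{1}{s_{k}}\|\tilde g_{k}-g_{k}\|<\frac{(n-1)t}{1-(n-1)t}$. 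A direct computation shows that $\frac{(n-1)t}{1-(n-1)t}<r$ is equivalent to $(n-1)t<\frac{r}{r+1}$, i.e. to the hypothesis $t<\frac{r}{(n-1)(r+1)}$; thus $\{\hat{\tilde g}_{k}\}$ is a disjoint normalized $n$-tuple at distance $<r$ from $E$, contradicting the assumption.

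The routine parts are the lattice manipulations; the only real care is bookkeeping the constants so that, after the two normalizations, part (i) lands below $t$ and part (ii) reproduces the exact threshold $\frac{r}{(n-1)(r+1)}$. I expect the main (minor) obstacle to be precisely this constant-tracking through the normalization steps, together with the observation that it suffices to treat the regime $t\le1$ in (i) --- for $t>1$ every normalized pair is trivially $t$-disjoint since $\big\||e|\wedge|f|\big\|\le\|e\|=1$, so the hypothesis is degenerate there.
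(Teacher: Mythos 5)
Your argument is correct, and the constant-tracking all checks out: in (i) the lattice contraction estimate gives $\bigl\||\hat e_i|\wedge|\hat e_j|\bigr\|\le\frac{t/3+2\eta}{1-t/6-\eta}\to\frac{2t}{6-t}<t$ (for $t<4$, hence in the only non-degenerate regime $t\le 1$), and in (ii) the equivalence $\frac{(n-1)t}{1-(n-1)t}<r\Leftrightarrow t<\frac{r}{(n-1)(r+1)}$ is exactly right. The comparison with the paper splits by part. For (ii) you are doing essentially what the paper does: the paper simply applies Proposition \ref{nrdns} to the quotient map $Q_{E}$ (so $\|T\|\le 1$ gives the $(n-1)(r+1)$ denominator) and lets the slack $\varepsilon=r-s$ tend to $r$, whereas you re-run the same disjointification inline and add a normalization of the disjointified vectors; both routes are the same computation, and your one-shot version avoids the limiting step at the cost of redoing the $e_k^{\pm}$ construction. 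For (i) the paper gives no argument at all --- it cites \cite[Lemma 4.4]{erz} --- so your direct contrapositive proof is a genuine self-contained substitute; it in fact shows that any distance below $\frac{t}{2+t}$ (so in particular $\frac{t}{3}$ for $t\le1$) already works, which is stronger than the stated $\frac{t}{6}$. Two small points worth making explicit if you write this up: distinctness of the $\hat e_k$ in (i) is cleanest via the $t$-disjointness itself (if $\hat e_i=\hat e_j$ then $\bigl\||\hat e_i|\wedge|\hat e_j|\bigr\|=1\ge t$), and in (ii) you should note $(n-1)t<\frac{r}{r+1}<1$ so that $s_k>0$ and the $\tilde g_k$ are nonzero, hence genuinely form a disjoint $n$-tuple.
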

\begin{proof}
(i) follows from \cite[Lemma 4.4]{erz}.

(ii): According to Proposition \ref{nrdns} for every $0<s<r$ there are no $\delta$-disjoint normalized $n$-tuple of vectors in $E=\ker Q_{E}$, where $\delta:=\frac{r-s}{\left(n-1\right)\left(r+1\right)}$. Since $s$ was chosen arbitrarily, the claim follows.
\end{proof}

For $r\ge 1$ we say that $H$ has the \emph{$r$-stable phase retrieval ($r$-SPR)} property if $\|g+h\|\wedge \|g-h\|\le r\left\|\left|g\right|-\left|h\right|\right|$, for every $g,h\in H$. If $H$ has the $r$-SPR property, for some $r\ge 1$, we say that it has the SPR property. In the following result sufficiency is \cite[Theorem 3.4(i)]{fopt}, while necessity is an improvement of \cite[Theorem 3.4(ii)]{fopt}. In conjunction with Proposition \ref{ndisp} it shows that a subspace has the SPR property if and only if it is $2$-dispersed.

\begin{proposition}
$H$ contains no $r$-disjoint normalized pairs iff it has $\frac{1}{r}$-SPR property.
\end{proposition}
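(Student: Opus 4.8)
The plan is to collapse both the SPR inequality and the disjointness condition into a single scalar comparison between $\min(\|u\|,\|v\|)$ and $\big\||u|\wedge|v|\big\|$, exploiting the standard vector lattice identity
\[
|g+h|\wedge|g-h| = \big||g|-|h|\big|,
\]
which follows from $|g+h|+|g-h| = 2(|g|\vee|h|)$ and $\big||g+h|-|g-h|\big| = 2(|g|\wedge|h|)$. Granting this, the term $\big\||g|-|h|\big\|$ in the definition of $r$-SPR equals $\big\||g+h|\wedge|g-h|\big\|$. Setting $u:=g+h$ and $v:=g-h$, and using that $(g,h)\mapsto(g+h,g-h)$ is a bijection of $H\times H$ (since $H$ is a linear subspace, $g=\tfrac{u+v}{2}$ and $h=\tfrac{u-v}{2}$ lie in $H$ whenever $u,v$ do), the $\tfrac1r$-SPR inequality $\|g+h\|\wedge\|g-h\|\le\tfrac1r\big\||g|-|h|\big\|$ becomes equivalent to
\[
\big\||u|\wedge|v|\big\|\ge r\min(\|u\|,\|v\|)\qquad\text{for all }u,v\in H.
\]

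I would then argue by contraposition against this reformulated statement. Its negation is the existence of $u,v\in H$ (necessarily nonzero) with $\big\||u|\wedge|v|\big\|<r\min(\|u\|,\|v\|)$, whereas $H$ containing an $r$-disjoint normalized pair means the existence of distinct $u',v'\in\So_H$ with $\big\||u'|\wedge|v'|\big\|<r$. The implication \emph{contains an $r$-disjoint pair $\Rightarrow$ fails $\tfrac1r$-SPR} is immediate, since a normalized witness satisfies $\min(\|u'\|,\|v'\|)=1$, so its disjointness bound is verbatim the failure of the displayed inequality.

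The hard part — indeed the only nonroutine step — is the converse, where one must manufacture a \emph{normalized} pair from arbitrary witnesses $u,v$; the obstacle is that rescaling $u$ and $v$ by different factors does not interact transparently with $|u|\wedge|v|$. I would handle this by assuming without loss of generality $\|u\|\le\|v\|$ and putting $u':=u/\|u\|$, $v':=v/\|v\|\in\So_H$. Then $\|v\|\ge\|u\|>0$ gives $|v|/\|v\|\le|v|/\|u\|$, so monotonicity of $\wedge$ yields $|u'|\wedge|v'|\le\tfrac{1}{\|u\|}\big(|u|\wedge|v|\big)$, whence $\big\||u'|\wedge|v'|\big\|\le\tfrac{1}{\|u\|}\big\||u|\wedge|v|\big\|<r$, using $\min(\|u\|,\|v\|)=\|u\|$. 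Finally I would verify $u'\ne v'$: since $r\le1$ (the admissible range, so that $\tfrac1r\ge1$ is a valid SPR constant), $u'=v'$ would force $\big\||u'|\wedge|v'|\big\|=1$, contradicting the bound $<r\le1$. This produces the desired $r$-disjoint normalized pair and closes the equivalence.
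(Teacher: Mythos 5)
Your proof is correct, and it reaches the conclusion by a cleaner route than the paper's. The key point is your identity $|g+h|\wedge|g-h|=\bigl||g|-|h|\bigr|$, which is indeed valid: by Yudin's theorem it reduces to the scalar identities $|g+h|+|g-h|=2(|g|\vee|h|)$ and $\bigl||g+h|-|g-h|\bigr|=2(|g|\wedge|h|)$ that you quote, together with $a\vee b-a\wedge b=|a-b|$. The paper does not use this exact identity: for the direction ``failure of $\frac{1}{r}$-SPR implies the existence of an $r$-disjoint normalized pair'' it normalizes so that $\|g-h\|=1\le\|g+h\|$ and then proves only the one-sided estimate $|g-h|\wedge\frac{|g+h|}{\|g+h\|}\le\bigl||g|-|h|\bigr|$, via a scalarly verified disjointness relation between $|g-h|-\bigl||g|-|h|\bigr|$ and $|g+h|-\bigl||g|-|h|\bigr|$; the converse direction it imports from \cite{fopt}. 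Your substitution $u=g+h$, $v=g-h$ converts the whole statement into the equivalence of two normalizations of the single inequality $\bigl\||u|\wedge|v|\bigr\|\ge r\min(\|u\|,\|v\|)$, which makes one implication genuinely immediate and isolates the only real content --- the rescaling step --- in the other; your monotonicity argument $|v|/\|v\|\le|v|/\|u\|$ handles that correctly, and the distinctness check via $r\le 1$ (forced by the requirement that an SPR constant be at least $1$) is a detail the paper's construction also needs but leaves implicit. In short, both arguments rest on a scalar lattice computation relating $|g\pm h|$ to $\bigl||g|-|h|\bigr|$, but yours packages it as an exact identity, treats the two vectors symmetrically, and proves both implications from scratch rather than citing one of them.
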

\begin{proof}
Sufficiency see by the reference above. Necessity: If $H$ does not have $\frac{1}{r}$-SPR property, there are $g,h\in H$ with $\|g+h\|\ge \|g-h\|=1>\frac{1}{r}\left\|\left|g\right|-\left|h\right|\right\|$. Observe that $$0_{F}\le \left|g-h\right|-\left|\left|g\right|-\left|h\right|\right|\bot \left|g+h\right|-\left|\left|g\right|-\left|h\right|\right|\ge 0_{F}$$ (by Yudin's theorem it is enough to prove this for $g,h\in\R$, which is straightforward). Recall that $\|g-h\|=1$; also let $f:=\frac{\left|g+h\right|}{\|g+h\|}$ and $e:=\left|\left|g\right|-\left|h\right|\right|$. We have $0_{F}\le\left|g-h\right|-e\bot f-\frac{\left|\left|g\right|-\left|h\right|\right|}{\|g+h\|}\ge \left(f-e\right)^{+}$, therefore $$\left|g-h\right|\wedge f=e+\left(\left|g-h\right|-e\right)\wedge \left(f-e\right)\le e+\left(\left|g-h\right|-e\right)\wedge \left(f-e\right)^{+}= e,$$
thus $\left\|\left|g-h\right|\wedge f\right\|\le\|e\|<r$, and so $g-h$ and $\frac{g+h}{\|g+h\|}$ are $r$-disjoint normalized vectors in $H$.
\end{proof}

It was proven in \cite[Theorem 5.1]{fopt} that if $F$ is order continuous, every infinitely dimensional dispersed subspace contains a $2$-dispersed infinitely dimensional subspace. In the light of Corollary \ref{disp} below this is equivalent to showing that for every $n>2$ every infinitely dimensional $n$-dispersed subspace of an order continuous Banach lattice always contains an infinitely dimensional $n-1$-dispersed subspace.

\begin{question}
\item[(i)] Are any of the two statements valid without the assumption of order continuity?
\item[(ii)] What are the operator versions of these statements?
\end{question}

Note that the notion of ($n$-)dispersed-ness makes sense without the assumption of closedness of $H$. Then, the closure inherits all such properties. In particular, the closure of a subspace with $r$-SPR property also has it. Also, note that these properties persist (with possibly different constants) under renormings, and so are topological in nature, which is corroborated by our further results.

\section{The topological approach}\label{dispe}

In this section we apply the topological methods mentioned in the introduction. Everywhere in this section $F$ is a Banach lattice, $E\subset F$ is a closed subspace, $H$ is a Banach space, and $T\in\Lo\left(F,H\right)$. Let $\tau$ be a linear topology on $F$ which is weaker than the norm topology. We call $\tau$ \emph{(boundedly) exhaustive} if every (norm-bounded) disjoint sequence is $\tau$-null. It is not hard to show that in this case every (norm-bounded) almost disjoint net with no terminus is also $\tau$-null. Furthermore, we say that $\tau$ is \emph{uniformly exhaustive} if for every open neighborhood $U$ of $0_{F}$ there is $n\in\N$ such that there are no disjoint $n$-tuples outside of $U$. Clearly, every uniformly exhaustive topology is exhaustive, and every exhaustive topology is boundedly exhaustive.

A more complicated property is the following: we say that $\tau$ has the \emph{Kadec-Pelczynski} property if whenever $\left(f_{p}\right)_{p\in P}\subset \So_{F}$ is $\tau$-null and $\left(q_{n}\right)_{n\in\N}\subset P$ there are $\left(p_{n}\right)_{n\in\N}\subset P$ such that $p_{n}\ge q_{n}$, for every $n\in\N$ and $\left(f_{p_{n}}\right)_{n\in\N}$ is almost disjoint. Note that if $\tau$ is locally full (i.e. has a local base at $0_{F}$ consisting of full sets, such as the weak topology), it is enough to consider positive nets (see the argument from the end of the proof of \cite[Theorem 3.2]{dot}). We can also introduce the obvious sequential analogue of the property.

Clearly, if $\tau$ is stronger than $\pi$, and $\tau$ is (boundedly / uniformly) exhaustive, then so is $\pi$; if $\pi$ has the (sequential) Kadec-Pelczynski property, then so does $\tau$. The relevance of these two properties lie in the following results.

\begin{proposition}\label{kpdns}If $\tau$ has the Kadec-Pelczynski property, then:
\item[(i)] $\tau$ agrees with the norm topology on every dispersed subspace.
\item[(ii)] Every DNS operator complements $\tau$.
\end{proposition}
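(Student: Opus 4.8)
The plan is to exploit the characterizations of dispersedness and DNS that have already been established, reducing both parts to the defining feature of the Kadec-Pelczynski property: that a $\tau$-null net on the unit sphere can be refined (going out along any prescribed cofinal selection) to an almost disjoint net. For part (i), by Proposition \ref{compl2} it suffices to show that $0_F$ is $\tau$-separated from $\So_E$, equivalently that $\tau$ agrees with the norm topology on $E$. Suppose not; then there is a net $\left(f_p\right)_{p\in P}\subset \So_E$ that is $\tau$-null. The Kadec-Pelczynski property hands us indices $\left(p_n\right)_{n\in\N}$ (built over any fixed cofinal $\left(q_n\right)_{n\in\N}\subset P$) such that $\left(f_{p_n}\right)_{n\in\N}$ is almost disjoint. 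But then $\left(f_{p_n}\right)_{n\in\N}$ is an almost disjoint sequence lying in $\So_E$, contradicting the assumption that $E$ is dispersed. Hence no such $\tau$-null net exists on $\So_E$, and $\tau$ agrees with the norm on $E$.

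For part (ii), I want to show that $\rho_T$ complements $\tau$, i.e. that there is no net on $\So_F$ that is simultaneously $\tau$-null and $\rho_T$-null. The argument runs in the same spirit: assume for contradiction that $\left(f_p\right)_{p\in P}\subset\So_F$ is $\tau$-null with $\rho_T\left(f_p\right)\to 0$. Applying the Kadec-Pelczynski property to $\left(f_p\right)_{p\in P}$ (with a fixed cofinal sequence $\left(q_n\right)_{n\in\N}$), I extract $\left(p_n\right)_{n\in\N}$ with $p_n\ge q_n$ so that $\left(f_{p_n}\right)_{n\in\N}$ is almost disjoint. Since $\left(p_n\right)$ is cofinal and $\rho_T\left(f_p\right)\to 0$ along the net, the subnet $\left(f_{p_n}\right)_{n\in\N}$ still satisfies $\rho_T\left(f_{p_n}\right)\to 0$. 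Thus $\left(f_{p_n}\right)_{n\in\N}$ is an almost disjoint normalized sequence that is $\rho_T$-null, which directly contradicts condition (ii) of Proposition \ref{dns2} (no almost disjoint sequence in $\So_F$ is $\rho_T$-null) since $T$ is DNS.

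The main obstacle I anticipate is purely bookkeeping with nets rather than any deep idea: I must make sure that the cofinal selection $\left(p_n\right)$ produced by the Kadec-Pelczynski property genuinely inherits the relevant convergence. Concretely, to conclude $\rho_T\left(f_{p_n}\right)\to 0$ in part (ii) I should choose the auxiliary sequence $\left(q_n\right)$ so that $\left(p_n\right)_{n\in\N}$ is cofinal in $P$ (which holds since $p_n\ge q_n$ and the $q_n$ may be taken cofinal), so that $\left(f_{p_n}\right)$ is a genuine subnet and convergence along $P$ passes to it. Everything else is a direct citation: dispersedness and the equivalences in Proposition \ref{compl2} for (i), and the DNS characterization in Proposition \ref{dns2}(ii) for (ii). No estimate or constant-tracking is required, so I expect the proof to be short once the net-theoretic selection is phrased carefully.
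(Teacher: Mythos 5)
Your overall strategy coincides with the paper's: apply the Kadec--Pelczynski selection to a putative $\tau$-null net and derive a contradiction with dispersedness (via Proposition \ref{compl2}) in part (i) and with the DNS characterization in part (ii). Part (i) is fine as written --- there the auxiliary sequence $\left(q_{n}\right)_{n\in\N}$ plays no role, since any almost disjoint sequence extracted from a net in $\So_{E}$ already violates dispersedness; in fact the paper just derives (i) from (ii) by taking $T=Q_{E}$, and your direct argument is an acceptable alternative.

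However, part (ii) as written has a genuine flaw in the step where you transfer $\rho_{T}$-nullity to the selected sequence. You propose to take $\left(q_{n}\right)_{n\in\N}$ \emph{cofinal} in $P$ so that $\left(f_{p_{n}}\right)_{n\in\N}$ becomes a subnet along which $\rho_{T}\left(f_{p_{n}}\right)\to 0$. This fails for two reasons. First, a general directed set $P$ need not contain any countable cofinal subset (e.g.\ the neighborhood filter of $0_{F}$ for a topology without a countable base), and the complementation property is explicitly a statement about arbitrary nets. Second, even when a countable cofinal family exists, cofinality of the index set $\left\{p_{n}\right\}$ alone does not make $\left(f_{p_{n}}\right)_{n\in\N}$ a subnet in the sense needed to inherit convergence; one would additionally need the indices to be eventually beyond every element of $P$. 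The correct (and simple) fix, which is exactly what the paper does, is to choose $\left(q_{n}\right)$ adapted to the $\rho_{T}$-convergence rather than to the order structure of $P$: since $\rho_{T}\left(f_{p}\right)\to 0$, for each $n$ pick $q_{n}\in P$ with $\rho_{T}\left(f_{p}\right)\le\frac{1}{n}$ whenever $p\ge q_{n}$. The Kadec--Pelczynski property then yields $p_{n}\ge q_{n}$ with $\left(f_{p_{n}}\right)_{n\in\N}$ almost disjoint, and the bound $\rho_{T}\left(f_{p_{n}}\right)\le\frac{1}{n}$ holds automatically, giving the desired $\rho_{T}$-null almost disjoint normalized sequence without any cofinality considerations. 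With this modification your argument is complete.
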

\begin{proof}
(i) follows from (ii), by considering the quotient map, or can be proven independently by the argument observed in \cite{erz} in the discussion after Proposition 4.1.\medskip

(ii): Assume that there is a net $\left(f_{p}\right)_{p\in P}\subset\So_{F}$, which is null simultaneously with respect to $\tau$ and $\rho_{T}$. For every $n\in\N$ find $q_{n}\in P$ such that $\rho_{T}\left(f_{p}\right)\le\frac{1}{n}$, whenever $p\ge q_{n}$. Since $\tau$ has the Kadec-Pelczynski property, we can select $\left(p_{n}\right)_{n\in \N}\subset P$ such that $p_{n}\ge q_{n}$, for every $n\in\N$, and $\left(f_{p_{n}}\right)_{n\in\N}$, is almost disjoint. Since we have $\rho_{T}\left(f_{p_{n}}\right)\le\frac{1}{n}$, for every $n\in\N$, this sequence is $\rho_{T}$-null, contradicting the DNS property of $T$, according to Theorem \ref{dns}.
\end{proof}

\begin{proposition}\label{uex}If $\tau$ is uniformly exhaustive, then:
\item[(i)] If $\tau$ agrees with the norm topology on $E$, then $E$ is $n$-dispersed, for some $n\in\N$.
\item[(ii)] If $T$ complements $\tau$, then $T$ is $n$-DNS, for some $n\in\N$.
\end{proposition}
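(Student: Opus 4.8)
The plan is to obtain (i) as an immediate consequence of (ii) and to put the actual work into (ii). For (i), recall from Proposition \ref{compl2} that ``$\tau$ agrees with the norm topology on $E$'' is equivalent to $\rho_{E}$ being $\tau$-complementary. Since $\rho_{E}=\rho_{Q_{E}}$ for the quotient map $Q_{E}:F\to F\slash E$, this says precisely that $Q_{E}$ complements $\tau$. Applying (ii) to $T=Q_{E}$ then yields that $Q_{E}$ is $n$-DNS for some $n$, which (as recorded when $n$-dispersedness was introduced) is exactly the statement that $E$ is $n$-dispersed. So everything reduces to (ii).

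For (ii) the idea is to compress the complementarity hypothesis into a single separating neighborhood and then feed that neighborhood into uniform exhaustiveness. First, since $T$ complements $\tau$, Proposition \ref{compl}(ii) supplies $\delta>0$ and a $\tau$-open neighborhood $U$ of $0_{F}$ with $U\cap\{f\in\So_{F}:\rho_{T}(f)\le\delta\}=\emptyset$; informally, every normalized vector on which $T$ is small (at most $\delta$) is kept outside $U$. Next, I would apply uniform exhaustiveness to this particular $U$ to produce $n\in\N$ for which no disjoint (normalized) $n$-tuple lies entirely outside $U$.

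I then claim $T$ is $n$-DNS with constant $r=\delta$. Arguing by contradiction, suppose there were a disjoint $n$-tuple $\{f_{1},\dots,f_{n}\}\subset\So_{F}$ with $\rho_{T}(f_{k})\le\delta$ for every $k$. Each $f_{k}$ would then lie in $\{f\in\So_{F}:\rho_{T}(f)\le\delta\}$, hence $f_{k}\notin U$, so $\{f_{1},\dots,f_{n}\}$ would be a disjoint $n$-tuple sitting entirely outside $U$, contradicting the choice of $n$. Thus no such tuple exists, which is exactly the definition of $T$ being $n$-DNS.

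The whole argument is a dictionary-translation between complementarity, $\tau$-separation, and uniform exhaustiveness, so I do not expect a genuinely hard step. The one point requiring care is the order of the quantifiers: the neighborhood $U$ and the threshold $\delta$ must be fixed \emph{first} from complementarity, and only afterward is $n$ extracted from uniform exhaustiveness, so that $n$ depends on $U$ while the $n$-DNS witness $r=\delta$ is already in hand; this ordering is precisely what makes $n$ uniform over all candidate tuples. A minor secondary point is to confirm that ``disjoint $n$-tuple outside $U$'' in the definition of uniform exhaustiveness is read with normalized tuples, matching the $\So_{F}$ normalization used in the definition of $n$-DNS.
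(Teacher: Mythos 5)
Your proposal is correct and follows essentially the same route as the paper: reduce (i) to (ii) via the quotient map, extract the separating neighborhood $U$ and threshold $\delta$ from Proposition \ref{compl}(ii), and then feed $U$ into uniform exhaustiveness to obtain $n$; the paper phrases the final step directly (some $f_{k}\in U$ forces $\rho_{T}(f_{k})>\delta$) where you argue by contradiction, but these are the same argument. Your closing worry about normalization is moot, since the paper's definition of uniform exhaustiveness excludes all disjoint $n$-tuples outside $U$, hence in particular the normalized ones.
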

\begin{proof}
Again, it is enough to only prove (ii). According to Proposition \ref{compl} there is a $\tau$-neighborhood $U$ of $0_{F}$ and $\delta>0$ such that $U\cap \left\{f\in \So_{F},~ \rho_{T}\left(f\right)\le\delta\right\}=\varnothing$. Since $\tau$ is uniformly exhaustive, there is $n\in\N$ such that there is no disjoint $n$-tuples outside of $U$. Assume that $\left\{f_{k}\right\}_{k=1}^{n}\subset\So_{F}$ are disjoint and so $f_{k}\in U\subset F\backslash \left\{f\in \So_{F},~ \rho_{T}\left(f\right)\le\delta\right\}$, for some $k\in 1,...,n$. Then, it follows that $\rho_{T}\left(f_{k}\right)>\delta$. Thus, $T$ is $n$-DNS.
\end{proof}

\begin{proposition}\label{bex}If $\tau$ is boundedly exhaustive, then:
\item[(i)] If $\tau$ agrees with the norm topology (on sequences) in $E$, then $E$ is $\8$-dispersed (dispersed).
\item[(ii)] If $T$ (sequentially) complements $\tau$, then $T$ is $\8$-DNS (DNS).
\end{proposition}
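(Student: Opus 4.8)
The plan is to prove (ii) directly and then obtain (i) from it by the quotient-map reduction used in Propositions \ref{taub} and \ref{uex}. Recall that $\rho_{E}=\rho_{Q_{E}}$, that $E$ is ($\8$-)dispersed exactly when the quotient map $Q_{E}:F\to F\slash E$ is ($\8$-)DNS, and that by Proposition \ref{compl2} (together with its evident sequential counterpart) the hypothesis ``$\tau$ agrees with the norm topology (on sequences) in $E$'' is precisely the statement that $Q_{E}$ (sequentially) complements $\tau$. Thus applying (ii) to $T=Q_{E}$ yields (i), and I would only need to spell out (ii).

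For the $\8$-DNS half of (ii), I would begin from the complementarity hypothesis on $\rho_{T}$. By Proposition \ref{compl} there is a $\tau$-neighborhood $U$ of $0_{F}$ and a $\delta>0$ with $U\cap\left\{f\in\So_{F}:\rho_{T}\left(f\right)\le\delta\right\}=\varnothing$. Now let $\left(f_{n}\right)_{n\in\N}\subset\So_{F}$ be an arbitrary disjoint sequence. It is norm-bounded, so bounded exhaustiveness of $\tau$ gives $f_{n}\xrightarrow{\tau}0_{F}$; hence $f_{n}\in U$ for all large $n$, which by the choice of $U$ forces $\rho_{T}\left(f_{n}\right)>\delta$ eventually. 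Therefore $\liminf_{n\to\8}\rho_{T}\left(f_{n}\right)\ge\delta$, and since the disjoint sequence was arbitrary, taking $r:=\frac{\delta}{2}$ shows $\liminf_{n\to\8}\rho_{T}\left(f_{n}\right)>r$ for every disjoint $\left(f_{n}\right)_{n\in\N}\subset\So_{F}$, i.e.\ $T$ is $\8$-DNS.

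For the parenthetical (DNS) statement I would invoke only sequential complementarity. Given a disjoint $\left(f_{n}\right)_{n\in\N}\subset\So_{F}$, bounded exhaustiveness again renders it $\tau$-null; were it simultaneously $\rho_{T}$-null it would violate the sequential complementarity of $\rho_{T}$. Hence no disjoint sequence in $\So_{F}$ is $\rho_{T}$-null, which is exactly condition (ii) of Theorem \ref{dns}, so $T$ is DNS.

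I do not expect a genuine obstacle here: the argument is a softer version of the proof of Proposition \ref{uex}, trading the uniform control (a single $n$) coming from uniform exhaustiveness for the merely asymptotic control (a $\liminf$, equivalently the $\8$ or sequential regime) coming from bounded exhaustiveness. The one point requiring a little care is the reduction of (i) to (ii) in the sequential case, where one must verify that ``agrees with the norm topology on sequences in $E$'' matches sequential complementarity of $Q_{E}$; this is the sequential counterpart of Proposition \ref{compl2}, established by the same completion argument restricted to sequences.
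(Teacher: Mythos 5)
Your proposal is correct and follows essentially the same route as the paper: reduce (i) to (ii) via the quotient map, obtain the $\8$-DNS conclusion by combining condition (ii) of Proposition \ref{compl} with the fact that bounded exhaustiveness makes every normalized disjoint sequence $\tau$-null (exactly the adaptation of the proof of Proposition \ref{uex} that the paper invokes), and get the DNS conclusion from sequential complementarity together with characterization (ii) of Theorem \ref{dns}. The only cosmetic difference is that you spell out the details the paper leaves as ``proven similarly,'' and your attribution of the sequential version of Proposition \ref{compl2} to the completion argument is slightly misplaced (that argument concerns the closure statement; the equivalence itself is a direct normalization argument), but this does not affect correctness.
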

\begin{proof}
Yet again, it is enough to only prove (ii). The net version of (ii) is proven similarly to Proposition \ref{bex} (see also the argument from \cite[Proposition 5.2(ii)]{erz}). For the sequential version assume that $\left(f_{n}\right)_{n\in\N}\subset \So_{F}$ is disjoint. Then, $e_{n}\xrightarrow[]{\tau}0_{F}$ implies $\rho_{T}\left(e_{n}\right)\not\to 0$. By Theorem \ref{dns} we conclude that $T$ is DNS.
\end{proof}

We now present a convenient sufficient condition for the Kadec-Pelczynski property.

\begin{proposition}\label{kp}
Let $\tau$ be a linear topology on $F$ which is weaker than the norm topology. Let $E\subset F$ be a closed sublattice, and let $\pi$ be a linear topology on $E$ with the Kadec-Pelczynski property. Assume that there is a $\tau$-to-$\pi$-continuous linear map $T:F\to E$ such that $Id_{F}-T$ is $\tau$-to-norm-continuous. Then, $\tau$ has the Kadec-Pelczynski property.
\end{proposition}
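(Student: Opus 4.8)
The plan is to transport an arbitrary $\tau$-null net on $\So_{F}$ to a $\pi$-null net on $\So_{E}$ via $T$, invoke the Kadec-Pelczynski property of $\pi$ there, and then pull the resulting almost disjoint sequence back to $F$ using that $T$ moves such nets only slightly in norm. Concretely, let $\left(f_{p}\right)_{p\in P}\subset\So_{F}$ be $\tau$-null and let $\left(q_{n}\right)_{n\in\N}\subset P$ be given. Since $Id_{F}-T$ is $\tau$-to-norm-continuous, $\|f_{p}-Tf_{p}\|\to 0$, whence $\|Tf_{p}\|\to 1$; and since $T$ is $\tau$-to-$\pi$-continuous, $Tf_{p}\xrightarrow[]{\pi}0_{E}$.

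First I would pass to the tail $P':=\left\{p\in P:~\|Tf_{p}\|>\tfrac{1}{2}\right\}$, which is cofinal and directed, and on which $g_{p}:=\frac{Tf_{p}}{\|Tf_{p}\|}\in\So_{E}$ is well defined. Because $\frac{1}{\|Tf_{p}\|}\to 1$ and scalar multiplication is $\pi$-continuous, $\left(g_{p}\right)_{p\in P'}$ is still $\pi$-null; moreover $\|Tf_{p}-g_{p}\|=\bigl|\|Tf_{p}\|-1\bigr|\to 0$, so combining with the first estimate yields $\|f_{p}-g_{p}\|\to 0$ along $P'$.

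Next I would apply the Kadec-Pelczynski property of $\pi$ on $E$ to the net $\left(g_{p}\right)_{p\in P'}$ and to a sequence $\left(q_{n}'\right)_{n\in\N}\subset P'$, where each $q_{n}'$ is chosen above both $q_{n}$ and the threshold index (possible by directedness of $P$). This produces $\left(p_{n}\right)_{n\in\N}\subset P'$ with $p_{n}\ge q_{n}'\ge q_{n}$ for every $n$ and $\left(g_{p_{n}}\right)_{n\in\N}$ almost disjoint in $E$. Since $E$ is a sublattice of $F$, the disjoint approximants witnessing this remain disjoint in $F$, so $\left(g_{p_{n}}\right)_{n\in\N}$ is almost disjoint in $F$ as well; as $\|f_{p_{n}}-g_{p_{n}}\|\to 0$, adding these perturbations to the same approximants shows that $\left(f_{p_{n}}\right)_{n\in\N}\subset\So_{F}$ is almost disjoint, which is exactly what the Kadec-Pelczynski property for $\tau$ requires.

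The two continuity estimates and the triangle-inequality bookkeeping are routine. The only point that needs care is the index management: the normalization $g_{p}$ is defined only on the tail $P'$, so I must verify that $P'$ is cofinal and directed and that the replacement $q_{n}\mapsto q_{n}'$ preserves $p_{n}\ge q_{n}$; this is the main (though minor) obstacle, and it is resolved purely by directedness of $P$. A secondary point worth flagging explicitly is the passage of almost-disjointness from $E$ to $F$, which is precisely where the hypothesis that $E$ is a \emph{sublattice} is used.
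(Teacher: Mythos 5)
Your strategy is the same as the paper's: push the net into $E$ via $T$, apply the Kadec--Pelczynski property of $\pi$ there, and pull the almost disjoint subsequence back using that $\|f_{p}-Tf_{p}\|\to 0$. The normalization step (passing to the tail $P'$ and replacing $Tf_{p}$ by $g_{p}=Tf_{p}/\|Tf_{p}\|$) is a reasonable extra precaution, since the property is stated for nets in $\So_{F}$, and your remark that the sublattice hypothesis is what makes disjointness in $E$ coincide with disjointness in $F$ is exactly right.

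There is, however, one concrete gap. You need $\|f_{p_{n}}-g_{p_{n}}\|\to 0$ \emph{as $n\to\infty$}, but what you have established is only that $\|f_{p}-g_{p}\|\to 0$ \emph{along the net} indexed by $P'$. Net convergence says nothing about an arbitrary sequence of indices $\left(p_{n}\right)_{n\in\N}$ unless those indices are forced into the relevant tails, and your choice of $q'_{n}$ (``above $q_{n}$ and the threshold index'') does not force this: the threshold only guarantees $\|Tf_{p}\|>\tfrac{1}{2}$, not that $\|f_{p_{n}}-g_{p_{n}}\|$ becomes small with $n$. The repair is exactly the device in the paper's proof (there phrased via the $\tau$-continuous semi-norm $\rho\left(f\right):=\|f-Tf\|$): for each $n$ choose an index beyond which $\|f_{p}-g_{p}\|<\tfrac{1}{n}$, and require $q'_{n}$ to dominate that index as well as $q_{n}$ and the threshold. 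Then $p_{n}\ge q'_{n}$ yields $\|f_{p_{n}}-g_{p_{n}}\|<\tfrac{1}{n}\to 0$, and the rest of your argument goes through unchanged.
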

\begin{proof}
Let $\rho$ be a semi-norm on $F$ defined by $\rho\left(f\right):=\|f-Tf\|$, $f\in F$. By the assumption, $\rho$ is $\tau$-continuous.

Let $\left(f_{p}\right)_{p\in P}\subset\So_{F}$ be $\tau$-null and let $\left(q_{n}\right)_{n\in\N}\subset P$. Since $\rho$ is $\tau$-continuous, it follows that $\rho\left(f_{p}\right)\to 0$, and so there are $\left(q'_{n}\right)_{n\in\N}\subset P$ such that $\rho\left(f_{p}\right)<\frac{1}{n}$, whenever $p\ge q'_{n}$. For every $n\in\N$ find $q''_{n}\in P$ such that $q''_{n}\ge q'_{n},q_{n}$.

Since we also have that $\left(Tf_{p}\right)_{p\in P}$ is $\pi$-null, and $\pi$ has the Kadec-Pelczynski property, there is $\left(p_{n}\right)_{n\in\N}\subset P$ and a disjoint $\left(e_{n}\right)_{n\in\N}\subset E$ such that $p_{n}\ge q''_{n}$ and $Tf_{p_{n}}-e_{n}\to 0_{F}$. We conclude that $\|Tf_{p_{n}}-f_{p_{n}}\|\le\frac{1}{n}$, for every $n\in\N$, hence $f_{p_{n}}-e_{n}\to 0_{F}$, and so $\left(f_{p_{n}}\right)_{n\in\N}$ is almost disjoint.
\end{proof}

Let us now introduce several topologies on $F$, which are ``derivative'' of the norm topology. The \emph{unbounded norm (un-)} topology is the linear topology determined by the neighborhoods of zero of the form $\left\{f\in F, \|\left|f\right|\wedge h\|<\varepsilon \right\}$, where $h\in F_{+}$ and $\varepsilon>0$. Note that  $f_{p}\xrightarrow[]{\mathrm{un}}0_{F}$ iff $h\wedge |f_{p}|\to 0_{F}$, for all $h\in F_{+}$. Clearly, un topology is weaker than the norm topology. This topology was thoroughly investigated in \cite{dot,kmt,klt}. In particular, a slight modification of the proof of \cite[Theorem 3.2]{dot} shows that the un-topology has Kadec-Pelczynski property.\medskip

Let $F_{a}$ be the order continuous part of $F$, i.e. the maximal ideal of $F$ on which the norm is order continuous. Note that $F_{a}$ is norm closed, and so is an order continuous Banach lattice.\medskip

The \emph{absolute weak (aw-)} topology on $F$ is the linear topology determined by the neighborhoods of zero of the form $\left\{f\in F,~ \nu\left(\left|f\right|\right)<\varepsilon \right\}$, where $\nu\in F^{*}_{+}$ and $\varepsilon>0$. Then $f_{p}\xrightarrow[]{\mathrm{aw}}0_{F}$ iff $|f_{p}|\xrightarrow[]{\mathrm{w}} 0_{F}$, and so in particular, the restriction of the absolute weak topology to a sublattice of $F$ agrees with the ``intrinsic'' absolute weak topology of that sublattice. It is easy to see that the absolute weak topology is stronger than the weak topology but weaker than the norm topology. One can show that any order bounded disjoint sequence is always aw-null. We also consider a weaker \emph{aaw}-topology, defined by $f_{p}\xrightarrow[]{\mathrm{aaw}}0_{F}$ iff $|f_{p}|\xrightarrow[]{\sigma\left(F,F^{*}_{a}\right)} 0_{F}$. This topology is boundedly exhaustive (see Corollary \ref{aaw}), but it is not always Hausdorff, and is somewhat reminiscent of the una-topology introduced in \cite{erz}. More information about the aw-topology and similar topologies see in \cite[Section 2.3 and Chapter 6]{ab0}.

The \emph{unbounded absolute weak (uaw-)} topology is defined similarly to the unbounded norm topology: $f_{p}\xrightarrow[]{\mathrm{uaw}}0_{F}$ if $h\wedge |f_{p}|\xrightarrow[]{\mathrm{w}} 0_{F}$, for all $h\in F_{+}$. It is easy to see that the uaw-topology is weaker than both $\mathrm{aw}$- and $\mathrm{un}$-topologies. An account on this topology is given in \cite{zabeti}. Let us establish a crucial fact.

\begin{proposition}\label{uaw}
The uaw-topology is uniformly exhaustive.
\end{proposition}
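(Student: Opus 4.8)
The plan is to verify the defining property of uniform exhaustiveness directly on a convenient neighborhood basis of the uaw-topology, exploiting that the positive functionals defining it are \emph{additive} on disjoint elements. First I would pin down the basis at $0_F$. Since $F^{*}=F^{*}_{+}-F^{*}_{+}$, weak convergence of the positive vectors $|f_{p}|\wedge h$ can be tested against positive functionals only, so the subbasic uaw-neighborhoods of $0_F$ may be taken of the form $U_{h,\nu,\varepsilon}:=\left\{f\in F:\nu\left(|f|\wedge h\right)<\varepsilon\right\}$ with $h\in F_{+}$, $\nu\in F^{*}_{+}$, $\varepsilon>0$. A general basic neighborhood is a finite intersection $\bigcap_{i=1}^{m}U_{h_{i},\nu_{i},\varepsilon_{i}}$, and putting $h:=\bigvee_{i}h_{i}$, $\nu:=\sum_{i}\nu_{i}$, $\varepsilon:=\min_{i}\varepsilon_{i}$ one checks $U_{h,\nu,\varepsilon}\subseteq\bigcap_{i}U_{h_{i},\nu_{i},\varepsilon_{i}}$, because $|f|\wedge h_{i}\le|f|\wedge h$ and $\nu_{i}\le\nu$ give $\nu_{i}\left(|f|\wedge h_{i}\right)\le\nu\left(|f|\wedge h\right)$. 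Thus every neighborhood of $0_F$ contains one of the form $U_{h,\nu,\varepsilon}$; since passing to a smaller neighborhood only strengthens the conclusion ``there are no disjoint $n$-tuples outside it'', it suffices to produce the required $n$ for each $U_{h,\nu,\varepsilon}$.

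Next I would run the counting argument. Suppose $\left\{f_{1},\dots,f_{n}\right\}\subset F$ is pairwise disjoint with every $f_{k}\notin U_{h,\nu,\varepsilon}$, i.e. $\nu\left(|f_{k}|\wedge h\right)\ge\varepsilon$ for each $k$. The elements $g_{k}:=|f_{k}|\wedge h$ are pairwise disjoint, positive, and satisfy $g_{k}\le h$. For finitely many pairwise disjoint positive elements the sum coincides with the supremum (by induction from $x\wedge y=0\Rightarrow x+y=x\vee y$, using distributivity of $\wedge$ over $\vee$), so $\sum_{k=1}^{n}g_{k}=\bigvee_{k=1}^{n}g_{k}\le h$. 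Applying the positive linear functional $\nu$ yields $n\varepsilon\le\sum_{k=1}^{n}\nu\left(g_{k}\right)\le\nu\left(h\right)$, whence $n\le\nu\left(h\right)/\varepsilon$. Consequently, choosing any integer $n>\nu\left(h\right)/\varepsilon$ guarantees that no disjoint $n$-tuple can lie entirely outside $U_{h,\nu,\varepsilon}$, which is exactly uniform exhaustiveness.

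The argument is short, and what matters is conceptual rather than computational: everything hinges on the additivity of $\nu$ on disjoint elements (equivalently, linearity together with $\sum_k g_{k}=\bigvee_k g_{k}$), which converts disjointness into a finite ``budget'' $\nu\left(h\right)$ that can be split into at most $\nu\left(h\right)/\varepsilon$ pieces. This is precisely where the uaw-topology outperforms submeasure-type topologies such as the un-topology: replacing $\nu\left(\cdot\right)$ by a lattice norm $\|\cdot\|$ would only give the subadditive bound $\|\bigvee_k g_{k}\|\le\sum_k\|g_{k}\|$, which points the wrong way and yields no pigeonhole bound, matching the paper's remark contrasting measures with generic submeasures. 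I therefore expect no genuine obstacle beyond correctly reducing arbitrary neighborhoods to the single-functional neighborhoods $U_{h,\nu,\varepsilon}$ and invoking the disjoint-sum identity $\sum_k g_{k}=\bigvee_k g_{k}\le h$.
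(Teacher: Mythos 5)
Your proof is correct and follows essentially the same route as the paper's: reduce to a subbasic neighborhood $U_{h,\nu,\varepsilon}$, then use additivity of the positive functional $\nu$ on the disjoint elements $|f_{k}|\wedge h\le h$ to get the pigeonhole bound $n\le\nu\left(h\right)/\varepsilon$. The extra detail you supply on passing from arbitrary uaw-neighborhoods to single-functional ones is just an explicit justification of the paper's opening sentence.
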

\begin{proof}
Any uaw-neighborhood $U$ of $0_{F}$ contains a set $U_{h,\nu,\varepsilon}:=\left\{f\in F,~ \nu\left(h\wedge \left|f\right|\right)<\varepsilon \right\}$, where $h\in F_{+}$, $\nu\in F^{*}_{+}$ and $\varepsilon>0$. Take $n> \frac{\nu\left(h\right)}{\varepsilon}$. We claim that there is no disjoint $n$-tuple outside of $U$. Indeed, if $f_{1},...,f_{n}\notin U$ are disjoint, it follows that $f_{1},...,f_{n}\notin U_{h,\nu,\varepsilon}$, hence $\nu\left(h\wedge \left|f_{k}\right|\right)\ge\varepsilon$, for every $k=1,...,n$. Using disjointness of $h\wedge \left|f_{k}\right|$'s we get $$\nu\left(h\right)< n\varepsilon\le \sum\limits_{k=1}^{n}\nu\left(h\wedge \left|f_{k}\right|\right)=\nu\left(\sum\limits_{k=1}^{n}\left(h\wedge \left|f_{k}\right|\right)\right)=\nu\left(\bigvee\limits_{k=1}^{n}\left(h\wedge \left|f_{k}\right|\right)\right)\le\nu\left(h\right),$$
Contradiction.
\end{proof}

One can consider the unbounded modification of any locally solid topology, see \cite{taylor}. In particular, it was observed there and implicitly in \cite{conradie} that the unbounded modification of a Hausdorff order continuous topology is the weakest Hausdorff order continuous locally solid topology on $F$. We will denote this topology by $\tau_{F}$. This topology only exists for some Banach lattices. However, if $\tau_{F}$ exists, and $G$ is a regular sublattice (for example an ideal) of $F$, then $\tau_{G}$ exists, and is the restriction of $\tau_{F}$ (see \cite[Proposition 5.12]{taylor}. It follows that if the norm on $F$ is order continuous, then the un- and uaw- topologies coincide with $\tau_{F}$.

\begin{corollary}\label{tauf}
If $F$ is order continuous, then $\tau_{F}$ is uniformly exhaustive and has the Kadec-Pelczynski property.
\end{corollary}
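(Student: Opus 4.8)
The plan is to deduce both assertions as immediate consequences of identifying $\tau_{F}$ with the two ``derivative'' topologies already analysed. The sentence immediately preceding the statement records the key structural fact: when the norm on $F$ is order continuous, both the un-topology and the uaw-topology coincide with $\tau_{F}$. This identification, resting on the general theory of unbounded modifications from \cite{taylor} (with $\tau_{F}$ being the weakest Hausdorff order continuous locally solid topology on $F$), carries all the genuine content; once it is in hand, the corollary is purely a matter of transporting properties across an equality of topologies.

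Concretely, for uniform exhaustiveness I would use the coincidence of $\tau_{F}$ with the uaw-topology and invoke Proposition \ref{uaw}, which asserts without any extra hypotheses that the uaw-topology is uniformly exhaustive; since uniform exhaustiveness is a property of the topology itself, an equality of topologies preserves it verbatim. For the Kadec--Pelczynski property I would instead use the coincidence of $\tau_{F}$ with the un-topology together with the fact, noted in the discussion of the un-topology (a slight modification of \cite[Theorem 3.2]{dot}), that the un-topology enjoys the Kadec--Pelczynski property. Thus each of the two clauses is simply read off from the corresponding property of a topology that literally equals $\tau_{F}$ in the order continuous setting.

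I expect essentially no obstacle at this stage, as the substantive work was carried out in Proposition \ref{uaw} and in the verification that the un-topology has the Kadec--Pelczynski property. The only point deserving a moment's care is to ensure that the coincidence of the un-, uaw-, and $\tau_{F}$-topologies is being applied under the correct hypothesis, namely full order continuity of the norm on $F$: it is precisely order continuity that forces the bounded and unbounded versions to collapse onto the single topology $\tau_{F}$, so that the two separately established properties can both be attributed to $\tau_{F}$ at once.
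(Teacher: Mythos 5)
Your proposal is correct and matches the paper's (implicit) argument exactly: the corollary is stated without proof precisely because, once the preceding paragraph identifies $\tau_{F}$ with both the un- and uaw-topologies under order continuity, uniform exhaustiveness is read off from Proposition \ref{uaw} and the Kadec--Pelczynski property from the earlier remark that the un-topology has it (via \cite[Theorem 3.2]{dot}). Nothing further is needed.
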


\begin{proposition}\label{ocont}
The following conditions are equivalent:
\item[(i)] $F$ is order continuous;
\item[(ii)] The aw-topology is stronger than the un-topology;
\item[(iii)] The un-topology is boundedly exhaustive;
\item[(iv)] The un-topology is uniformly exhaustive.
\end{proposition}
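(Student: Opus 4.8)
The plan is to prove Proposition \ref{ocont} by establishing the cycle (i)$\Rightarrow$(ii)$\Rightarrow$(iv)$\Rightarrow$(iii)$\Rightarrow$(i), using the characterizations of order continuity already available and the structural results just proven about the uaw-topology. The guiding principle is that the un- and uaw-topologies differ exactly by whether one uses norm convergence or weak convergence of the truncations $h\wedge|f_p|$, and order continuity is precisely what erases that difference.

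For (i)$\Rightarrow$(ii), I would argue that when $F$ is order continuous, every positive disjoint (more generally, order bounded disjoint) behavior forces norm convergence. Concretely, to show the aw-topology is stronger than the un-topology it suffices to show that for fixed $h\in F_+$ and $\varepsilon>0$ the un-neighborhood $\{f:\|h\wedge|f|\|<\varepsilon\}$ contains an aw-neighborhood. The key observation is that on the order interval $[0,h]$, order continuity of the norm means the norm topology and the weak topology agree on monotone behavior; more usefully, if $\nu$ is a strictly positive functional on the ideal generated by $h$ (available since $[0,h]$ is weakly compact in the order continuous setting, so its dual separates points) then $\nu(g)$ small forces $\|g\|$ small uniformly on $[0,h]$. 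Thus one controls $\|h\wedge|f|\|$ by a single $\nu(h\wedge|f|)$, giving the inclusion of neighborhoods. The cleanest route is to invoke that in an order continuous lattice order intervals are weakly compact, so the weak and norm topologies coincide on $[0,h]$, which directly yields that aw-convergence implies un-convergence.

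The implication (ii)$\Rightarrow$(iv) is essentially free: if the aw-topology is stronger than the un-topology, then since aw is weaker than uaw—no, here I must be careful—rather I use that the uaw-topology is always uniformly exhaustive by Proposition \ref{uaw}, and under (ii) the un-topology is sandwiched so that it too becomes uniformly exhaustive. More directly, (ii) says aw $\geq$ un; combined with the fact that aw is itself uniformly exhaustive whenever its truncations behave well, one transfers uniform exhaustivity downward. Since uniformly exhaustive topologies descend to weaker topologies (stated in the paragraph before Proposition \ref{kpdns}: if $\tau$ is stronger than $\pi$ and $\tau$ is uniformly exhaustive then so is $\pi$), and since (ii) makes un weaker than aw, it suffices to know aw is uniformly exhaustive under (i); this follows from the same uaw-style counting argument in Proposition \ref{uaw} applied with norm estimates. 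Then (iv)$\Rightarrow$(iii) is immediate since every uniformly exhaustive topology is exhaustive hence boundedly exhaustive.

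The crux—and the step I expect to be the main obstacle—is the converse (iii)$\Rightarrow$(i): deducing order continuity from the un-topology being boundedly exhaustive, i.e. from every norm-bounded disjoint sequence being un-null. The natural strategy is contrapositive: if $F$ is not order continuous, then by the classical characterization $F$ contains a lattice copy of $c_0$ (or equivalently there is an order bounded disjoint sequence not converging to zero in norm, failing order continuity). I would produce a norm-bounded disjoint sequence $(f_n)$ that fails to be un-null by exhibiting some $h\in F_+$ with $\|h\wedge|f_n|\|\not\to 0$. The obstacle is matching the right failure of order continuity to a concrete witness against bounded exhaustivity of un: the $c_0$-copy gives disjoint $(f_n)$ dominated by a common $h$, yet staying norm-bounded-away, so $\|h\wedge|f_n|\|=\|f_n\|\not\to0$, contradicting (iii). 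Making the truncation equal the element (by choosing $h$ to dominate the sequence) is the technical heart, and I would lean on the standard theorem that non-order-continuity yields exactly such an order bounded disjoint sequence bounded away from zero.
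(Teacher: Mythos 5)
Your step (iii)$\Rightarrow$(i) is essentially the paper's argument (via the Meyer--Nieberg characterization of order continuity through order bounded disjoint sequences), but two of your other steps contain genuine errors. The most serious is (ii)$\Rightarrow$(iv). You propose to deduce uniform exhaustivity of the un-topology from uniform exhaustivity of the aw-topology, but the aw-topology is \emph{not} uniformly exhaustive --- indeed not even boundedly exhaustive --- on order continuous lattices in general: in $F=L_{1}[0,1]$ any normalized disjoint sequence $\left(f_{n}\right)_{n\in\N}$ satisfies $\nu\left(\left|f_{n}\right|\right)=1$ for $\nu=\int\cdot\,d\lambda$, so it is not aw-null (this is exactly why Theorem \ref{dual} characterizes bounded exhaustivity of aw by order continuity of $F^{*}$, not of $F$). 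The ``sandwiching'' also points the wrong way: uniform exhaustivity passes from a topology to \emph{weaker} ones, and the uaw-topology is weaker than un, so Proposition \ref{uaw} by itself gives nothing about un. The missing idea is the one the paper uses: when $F$ is order continuous the un- and uaw-topologies \emph{coincide} (both equal $\tau_{F}$), so Proposition \ref{uaw} applies verbatim to un and yields (i)$\Rightarrow$(iv) directly; the same identity gives (i)$\Rightarrow$(ii) for free, since aw is always stronger than uaw. With that, the paper closes the loop by proving (ii)$\Rightarrow$(i) and (iii)$\Rightarrow$(i) separately, both via Meyer--Nieberg, rather than running your single cycle through (ii).

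A secondary problem is your ``cleanest route'' for (i)$\Rightarrow$(ii): weak compactness of order intervals does not make the weak and norm topologies coincide on $\left[0,h\right]$ (the shifted Rademacher functions $\frac{1}{2}\left(\1+r_{n}\right)$ in $L_{2}[0,1]$ lie in $\left[0,\1\right]$ and converge weakly but not in norm). Your earlier variant --- controlling $\left\|g\right\|$ on $\left[0,h\right]$ by a single strictly positive functional --- is true in the order continuous setting, but it does not follow from the dual separating points; it needs the fact that the un-topology agrees with the norm topology on almost order bounded sets, which is essentially the machinery you were trying to bypass.
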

\begin{proof}
As mentioned before, if $F$ is order continuous, then $\mathrm{uaw}=\mathrm{un}$. Hence, (i)$\Rightarrow$(iv) follows immediately from Proposition \ref{uaw}. (iv)$\Rightarrow$(iii) is trivial. Since $\mathrm{aw}$ is stronger than $\mathrm{uaw}$, we get (i)$\Rightarrow$(ii). To show order continuity we will use Meyer-Nieberg theorem (see \cite[Theorem 2.4.2]{mn}; we refer to the condition (iv)).

(ii)$\Rightarrow$(i): Every order bounded disjoint sequence is aw-null, hence un-null, thus norm-null.

(iii)$\Rightarrow$(i): Every order bounded disjoint sequence is norm-bounded, hence un-null, thus norm-null (due to order boundedness).
\end{proof}

We can now extend \cite[Theorem 4.5]{erz} and \cite[Proposition 5.3]{gm2} as follows.

\begin{theorem}\label{udns}
If $F$ is order continuous, then the following conditions are equivalent:
\item[(i)] $T$ is DNS;
\item[(ii)] $T$ is $\8$-DNS;
\item[(iii)] $T$ is $n$-DNS, for some $n\in\N$;
\item[(iv)] $T$ (sequentially) complements $\tau_{F}$.
\end{theorem}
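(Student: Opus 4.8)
The plan is to exploit the fact that, for order continuous $F$, Corollary \ref{tauf} makes $\tau_{F}$ simultaneously uniformly exhaustive and endowed with the Kadec-Pelczynski property; thus every result of Section \ref{dispe} applies with $\tau=\tau_{F}$, and the theorem reduces to assembling them into a single cycle of implications. Throughout I take $\tau=\tau_{F}$.

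First the two elementary links between the DNS-type conditions. The implication (ii)$\Rightarrow$(i) is already recorded before the statement (an $\8$-DNS operator is automatically DNS). For (iii)$\Rightarrow$(ii) I argue directly: if $T$ is $n$-DNS with witness $r>0$, then for any disjoint $\left(f_{k}\right)_{k\in\N}\subset\So_{F}$ only finitely many indices can satisfy $\rho_{T}\left(f_{k}\right)\le r$, since otherwise $n$ of them would form a disjoint $n$-tuple in $\So_{F}$ violating the $n$-DNS condition; hence $\liminf_{k}\rho_{T}\left(f_{k}\right)\ge r$, so $T$ is $\8$-DNS with witness $r/2$. This is the only genuinely hands-on computation.

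Next the topological links, all immediate from Corollary \ref{tauf}. For (i)$\Rightarrow$(iv): since $\tau_{F}$ has the Kadec-Pelczynski property, Proposition \ref{kpdns}(ii) gives that the DNS operator $T$ complements $\tau_{F}$ in the net sense. For (iv)$\Rightarrow$(iii): since $\tau_{F}$ is uniformly exhaustive, Proposition \ref{uex}(ii) shows that $T$ must be $n$-DNS for some finite $n$. Together these close the cycle (iii)$\Rightarrow$(ii)$\Rightarrow$(i)$\Rightarrow$(iv)$\Rightarrow$(iii), establishing the equivalence of (i)--(iii) with the net form of (iv).

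It remains to fold in the parenthetical sequential form of (iv). One direction is free: net complementarity trivially implies sequential complementarity. For the converse I would not route through uniform exhaustiveness, but instead use that $\tau_{F}$ is in particular boundedly exhaustive, so that the sequential half of Proposition \ref{bex}(ii) shows that sequential complementarity of $\tau_{F}$ already forces $T$ to be DNS, i.e. the sequential form of (iv) implies (i). The one point requiring care is precisely this net/sequential bookkeeping: the passage to a \emph{finite} $n$ rests on the separation property of Proposition \ref{compl}(ii), which encodes net (not merely sequential) complementarity, whereas the sequential hypothesis only yields DNS directly; so the cycle must extract $n$-DNS from the net version while the sequential version is tied back to (i) separately. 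Beyond this there is no real obstacle---the substance has been front-loaded into Corollary \ref{tauf} and Propositions \ref{kpdns}, \ref{uex}, \ref{bex}.
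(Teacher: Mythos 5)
Your proposal is correct and follows essentially the same route as the paper: the cycle (i)$\Rightarrow$(net iv)$\Rightarrow$(iii)$\Rightarrow$(ii)$\Rightarrow$(i) via Corollary \ref{tauf} together with Propositions \ref{kpdns}(ii) and \ref{uex}(ii), with the sequential form of (iv) tied back to (i) through the sequential half of Proposition \ref{bex}(ii). The only difference is that you spell out (iii)$\Rightarrow$(ii) explicitly (the paper dismisses it as trivial), and your pigeonhole argument there is sound.
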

\begin{proof}
(iii)$\Rightarrow$(ii)$\Rightarrow$(i) are trivial. (i) implies the net version of (iv) by Corollary \ref{tauf} and part (ii) of Proposition \ref{kpdns}, which in turn implies (iii) by Corollary \ref{tauf} and part (ii) of Proposition \ref{uex}. Finally, the net version of (iv) implies the sequential version, which in turn implies (i) by Corollary \ref{tauf} and part (ii) of Proposition \ref{bex}.
\end{proof}

\begin{corollary}\label{disp}
If $F$ is order continuous, then $E$ is dispersed if and only if it is $n$-dispersed, for some $n\in\N$, and if and only if $\tau_{F}$ agrees with the norm topology on (sequences in) $E$.
\end{corollary}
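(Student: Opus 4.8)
The plan is to deduce Corollary \ref{disp} directly from Theorem \ref{udns} by applying the latter to the quotient map $Q_{E}:F\to F\slash E$. The key observation, already recorded in the excerpt, is that $E$ is dispersed if and only if $Q_{E}$ is DNS (from the discussion preceding Theorem \ref{dns}), and that $E$ is $n$-dispersed if and only if $Q_{E}$ is $n$-DNS (from the discussion following Proposition \ref{ndisp}). Thus the subspace-side equivalences are nothing but the operator-side equivalences of Theorem \ref{udns} transported through this dictionary.

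Concretely, I would first note that since $F$ is order continuous, Theorem \ref{udns} applies to any operator out of $F$, in particular to $Q_{E}$. The chain of equivalences (i)$\Leftrightarrow$(iii) in Theorem \ref{udns} says $Q_{E}$ is DNS iff $Q_{E}$ is $n$-DNS for some $n\in\N$; translating via the dictionary gives that $E$ is dispersed iff $E$ is $n$-dispersed for some $n\in\N$. For the topological condition, I would use condition (iv) of Theorem \ref{udns}: $Q_{E}$ (sequentially) complements $\tau_{F}$. By Proposition \ref{compl2}, $\rho_{E}=\rho_{Q_{E}}$ being $\tau_{F}$-complementary is equivalent to $\tau_{F}$ agreeing with the norm topology on $E$ (condition (ii) of that proposition). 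One must be slightly careful matching the net and sequential versions, but since Theorem \ref{udns} establishes that all four conditions — including both the net and sequential forms of (iv) — are equivalent, the sequential and net variants of "$\tau_{F}$ agrees with the norm topology on $E$" both appear on the same list and hence coincide for order continuous $F$.

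I do not anticipate a genuine obstacle here, as the statement is essentially a reformulation of Theorem \ref{udns} in the language of subspaces; the entire content has already been invested in the operator-theoretic theorem and its supporting propositions. The only point requiring mild attention is bookkeeping: verifying that $\rho_{E}$ is precisely $\rho_{Q_{E}}$ so that Proposition \ref{compl2} can be invoked, and confirming that the parenthetical "sequences in" qualifier corresponds correctly to the sequential complementarity in Theorem \ref{udns}(iv) via Proposition \ref{compl2} (ii). Both are immediate from the definitions. Hence the proof reduces to a single sentence citing Theorem \ref{udns} together with the established equivalence between dispersedness of $E$ and the DNS property of $Q_{E}$, and between $\tau_{F}$-complementarity of $\rho_{E}$ and agreement of $\tau_{F}$ with the norm topology on $E$.
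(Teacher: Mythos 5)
Your proposal is correct and matches the paper's intent exactly: the corollary is stated without proof precisely because it is Theorem \ref{udns} applied to the quotient map $Q_{E}$, using the already-recorded equivalences ``$E$ is ($n$-)dispersed iff $Q_{E}$ is ($n$-)DNS'' and Proposition \ref{compl2} (with $\rho_{Q_{E}}=\rho_{E}$) to translate complementarity of $\tau_{F}$ into agreement of $\tau_{F}$ with the norm topology on $E$. The only alternative reading the paper supports is to chain the part-(i) statements of Propositions \ref{kpdns}, \ref{uex} and \ref{bex} directly, but that is the same argument in subspace language, so there is nothing to add.
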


\section{The role of the absolute weak topology}\label{awd}

In this section $F$ is a Banach lattice. In contrast with the previous section where we predominantly used $\tau_{F}$ as the ``working'' topology, we now mostly focus on the aw-topology.

\begin{proposition}\label{fincod}
If $F_{a}$ has finite co-dimension in $F$, then the aw-topology has the Kadec-Pelczynski property, and in particular agrees with the norm topology on every dispersed subspace.
\end{proposition}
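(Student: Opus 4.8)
The plan is to reduce to $F_{a}$ via the transfer principle of Proposition \ref{kp}, exploiting that $F_{a}$ is order continuous so that its aw-topology already enjoys the Kadec-Pelczynski property; the second assertion will then be immediate from part (i) of Proposition \ref{kpdns}.

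First I would record that the aw-topology on the closed ideal $F_{a}$ has the Kadec-Pelczynski property. Note $F_{a}$ is a closed sublattice of $F$, and by the remark that the aw-topology localizes to sublattices, the aw-topology of $F_{a}$ is exactly the restriction of the aw-topology of $F$. Since $F_{a}$ is order continuous, on $F_{a}$ we have $\mathrm{un}=\mathrm{uaw}=\tau_{F_{a}}$, and the latter has the Kadec-Pelczynski property by Corollary \ref{tauf}. As the aw-topology is always stronger than the uaw-topology, and the Kadec-Pelczynski property passes to stronger topologies, the aw-topology on $F_{a}$ has it as well.

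Next I would produce the map required by Proposition \ref{kp}. Since $F_{a}$ is a closed subspace of finite co-dimension it is complemented, so I fix a bounded projection $T\colon F\to F_{a}$ with finite-dimensional kernel $W$, and set $S:=Id_{F}-T$, a bounded finite-rank operator onto $W$. Writing $Sf=\sum_{i=1}^{m}\phi_{i}(f)w_{i}$ with $\phi_{i}\in F^{*}$ shows that $S$ is weak-to-norm continuous, hence aw-to-norm continuous because the aw-topology is stronger than the weak topology. Consequently $T=Id_{F}-S$ is aw-to-aw continuous, the identity being aw-to-aw continuous and $S$ being aw-to-norm, hence a fortiori aw-to-aw, continuous. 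Thus $T$ is aw-to-$\pi$ continuous (with $\pi$ the aw-topology on $E=F_{a}$) and $Id_{F}-T=S$ is aw-to-norm continuous, so Proposition \ref{kp} yields that the aw-topology on $F$ has the Kadec-Pelczynski property. Applying Proposition \ref{kpdns}(i) then gives that it agrees with the norm topology on every dispersed subspace.

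The only point requiring care is the continuity bookkeeping for the finite-rank complement $S$: one must observe that every bounded functional is aw-continuous, equivalently $(F,\mathrm{aw})^{*}=F^{*}$, since $|\phi(f)|\le|\phi|(|f|)$ exhibits each $\phi\in F^{*}$ as dominated by the aw-continuous seminorm $f\mapsto|\phi|(|f|)$. This is what makes $S$ genuinely aw-to-norm continuous; everything else is a direct invocation of the stated transfer (Proposition \ref{kp}) and inheritance facts. I do not expect a substantive obstacle here, as the finite co-dimension of $F_{a}$ does all the heavy lifting by confining the ``bad'' part of $F$ to a finite-dimensional piece on which all reasonable topologies coincide.
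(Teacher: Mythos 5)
Your proof is correct and follows essentially the same route as the paper: split off the finite-dimensional complement of $F_{a}$ via a projection whose finite-rank part is aw-to-norm continuous, and feed this into Proposition \ref{kp} using the Kadec-Pelczynski property of the unbounded topology on the order continuous part. The only cosmetic difference is that you take $\pi$ to be the aw-topology of $F_{a}$ (which inherits the property from $\mathrm{un}_{F_{a}}$ since it is stronger), whereas the paper takes $\pi=\mathrm{un}_{F_{a}}$ directly; the two are interchangeable.
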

\begin{proof}
Since $F_{a}$ is a closed subspace of $F$ of co-dimension $n$, there is a subspace $E$ of $F$ of dimension $n$ and a projection $P:F\to E$ such that $T:=Id_{F}-P$ is a projection onto $F_{a}$. Then, $P$ is weak-to-norm continuous, therefore aw-to-norm continuous, hence aw-to-aw continuous, and thus $T$ is also aw-to-aw continuous. Note that the $\mathrm{aw}_{F}$-topology restricted to $F_{a}$ agrees with $\mathrm{aw}_{F_{a}}$-topology, and so by Proposition \ref{ocont} it is stronger than $\mathrm{un}_{F_{a}}$-topology, which has the Kadec-Pelczynski property. Thus, $T$ is aw-to-$\mathrm{un}_{F_{a}}$ continuous, and so according to Proposition \ref{kp} the aw-topology has the Kadec-Pelczynski property. The last claim follows from part (i) of Proposition \ref{kpdns}.
\end{proof}

\begin{question}
Does any of the two conclusions of Proposition \ref{fincod} imply that $F_{a}$ has finite co-dimension? When does the weak topology have the Kadec-Pelczynski property?
\end{question}

A partial affirmative answer to the first question see in the following result, which also complements \cite[Theorem 6.1]{fopt}.

\begin{theorem}\label{kpck}
For a compact Hausdorff $K$ the following conditions are equivalent:
\item[(i)] $K$ has finitely many non-isolated points;
\item[(ii)] $\Co\left(K\right)_{a}$ has a finite codimension in $\Co\left(K\right)$;
\item[(iii)] The pointwise topology on $\Co\left(K\right)$ has the Kadec-Pelczynski property;
\item[(iii')] The pointwise topology on $\Co\left(K\right)$ has the sequential Kadec-Pelczynski property;
\item[(iv)] The weak topology on $\Co\left(K\right)$ has the sequential Kadec-Pelczynski property;
\item[(iv')] The absolute weak topology on $\Co\left(K\right)$ has the sequential Kadec-Pelczynski property;
\item[(v)] $\Co\left(K\right)$ contains no dispersed subspaces of infinite dimension;
\item[(v')] $\Co\left(K\right)$ contains no $2$-dispersed = SPR subspaces of infinite dimension.
\end{theorem}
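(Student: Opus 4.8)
The plan is to prove the equivalences by running two interlocking cycles that pass through the ``structural'' conditions (i)--(ii), the ``Kadec--Pelczynski'' conditions (iii)--(iv$'$), and the ``dispersion'' conditions (v)--(v$'$), doing the nontrivial steps with the machinery of Section \ref{dispe} and deferring the one genuinely hard existence statement to \cite{fopt}. The first thing I would pin down is the order continuous part: for $\Co\left(K\right)$ one checks that $\Co\left(K\right)_{a}=\left\{f:~f\left(x\right)=0\text{ for every non-isolated }x\right\}$, a closed ideal lattice-isometric to $c_{0}\left(I\right)$ with $I$ the set of isolated points (a tent of fixed height shrinking to a non-isolated point witnesses failure of order continuity, while a function vanishing at all non-isolated points has $\left\{\left|f\right|\ge\varepsilon\right\}$ finite and so lies in $c_{0}\left(I\right)$). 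Granting this, (i)$\Leftrightarrow$(ii) is a counting argument: the evaluations at distinct non-isolated points are linearly independent modulo $\Co\left(K\right)_{a}$ (separate them by Urysohn), so the codimension of $\Co\left(K\right)_{a}$ equals the number of non-isolated points.

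For the Kadec--Pelczynski block I would first prove (ii)$\Rightarrow$(iii) through Proposition \ref{kp}. Writing the non-isolated points as $x_{1},\dots,x_{m}$ and picking $g_{i}\in\Co\left(K\right)$ with $g_{i}\left(x_{j}\right)=\delta_{ij}$, the map $Pf:=\sum_{i}f\left(x_{i}\right)g_{i}$ is a finite-rank projection whose complement $T:=Id-P$ maps onto $\Co\left(K\right)_{a}=c_{0}\left(I\right)$ (indeed $Tf\left(x_{j}\right)=0$ for all $j$). Since $f\mapsto f\left(x_{i}\right)$ is pointwise-continuous, $P$ is pointwise-to-norm continuous, and a capping estimate ($h\in c_{0}\left(I\right)_{+}$ is small off a finite set, where $Tf_{p}\to 0$ pointwise) shows $T$ is pointwise-to-$\mathrm{un}_{c_{0}\left(I\right)}$ continuous; as $\mathrm{un}$ on $c_{0}\left(I\right)$ has the property by Corollary \ref{tauf}, Proposition \ref{kp} gives the pointwise Kadec--Pelczynski property. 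Then (iii)$\Rightarrow$(iii$'$) is trivial, and (iii$'$)$\Leftrightarrow$(iv)$\Leftrightarrow$(iv$'$) follows because on the norm-bounded unit sphere of $\Co\left(K\right)$ a sequence is pointwise null iff weakly null (dominated convergence, plus evaluations being functionals) iff absolutely-weakly null ($\left|f_{n}\right|\to 0$ weakly $=\left|f_{n}\right|\to 0$ pointwise for bounded sequences); since the ``almost disjoint'' conclusion is topology-independent, the three sequential properties are literally the same.

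For the dispersion block, (ii)$\Rightarrow$(v) is immediate from the structure theory: $c_{0}\left(I\right)$ is atomic and order continuous, so its dispersed subspaces are finite-dimensional by \cite[Corollary 2.11]{gmm}, and since $c_{0}\left(I\right)$ sits as a closed sublattice of finite codimension this passes to $\Co\left(K\right)$ by the finite-codimension remark following Proposition \ref{taub}. The implication (v)$\Rightarrow$(v$'$) is trivial, every $2$-dispersed subspace being dispersed.

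It remains to close back to (i), and this is where the real work lies. Negating (i), so that $K$ has infinitely many non-isolated points, I would invoke \cite[Theorem 6.1]{fopt} to produce an infinite-dimensional subspace $E\subset\Co\left(K\right)$ with the SPR property, that is (by the discussion after Proposition \ref{ndisp}) an infinite-dimensional $2$-dispersed subspace; this is exactly the negation of (v$'$), giving (v$'$)$\Rightarrow$(i). The very same $E$ closes the Kadec--Pelczynski cycle: being dispersed it contains no almost disjoint sequence, while the construction furnishes a normalized weakly null sequence in $E$, which is absolutely-weakly null and has no almost disjoint subsequence, witnessing the failure of the sequential Kadec--Pelczynski property and hence yielding (iv$'$)$\Rightarrow$(i). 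The main obstacle is precisely this existence statement -- manufacturing, out of infinitely many non-isolated points, an infinite-dimensional subspace on which the lattice operations are uniformly non-degenerate (the SPR inequality). Everything else is bookkeeping with the order continuous part or a direct application of the criteria of Section \ref{dispe}; the analytic heart, and the only place I would lean on \cite{fopt} rather than reprove, is the construction of an infinite-dimensional stable phase retrieval subspace together with the (mild) verification that it carries a weakly null normalized sequence, so that it simultaneously defeats the sequential Kadec--Pelczynski property.
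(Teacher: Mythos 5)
Your proposal tracks the paper's proof almost step for step on most of the cycle: (i)$\Leftrightarrow$(ii) via the identification of $\Co\left(K\right)_{a}$ with the functions vanishing at the non-isolated points, hence with $c_{0}\left(I\right)$; (ii)$\Rightarrow$(iii) by feeding the finite-rank projection onto the non-isolated points into Proposition \ref{kp} (the paper says exactly this: ``proven similarly to Proposition \ref{fincod}, noting that $\mathrm{un}_{\Co\left(K\right)_{a}}$ is the pointwise topology''); the identification of the three sequential Kadec--Pelczynski conditions via ``norm-bounded and pointwise null $=$ weakly null $=$ aw-null''; and (v)$\Rightarrow$(v')$\Rightarrow$(i) with \cite[Theorem 6.1]{fopt}. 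Where you genuinely diverge is in how the Kadec--Pelczynski block is tied back to the rest. The paper proves (iv)$\Rightarrow$(v) by asserting that every infinite-dimensional subspace contains a normalized weakly null sequence and applying the sequential Kadec--Pelczynski property to it. You instead prove (ii)$\Rightarrow$(v) directly from the structure theory ($c_{0}\left(I\right)$ is atomic and order continuous, plus the finite-codimension remark after Proposition \ref{taub}), which is sound and arguably cleaner, and then close the loop with a separate implication (iv')$\Rightarrow$(i) by reusing the subspace $E$ of \cite[Theorem 6.1]{fopt} and asserting that ``the construction furnishes a normalized weakly null sequence in $E$.''

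That last assertion is the one genuine gap. \cite[Theorem 6.1]{fopt} hands you an infinite-dimensional $2$-dispersed subspace and nothing more; the existence of a normalized weakly null sequence inside it is an extra property of the construction that you have not verified, and it is not a formal consequence of dispersedness. Indeed, an infinite-dimensional subspace of $\Co\left(K\right)$ with no normalized weakly null sequence contains a copy of $\ell_{1}$ by Rosenthal's theorem, and such copies \emph{are} dispersed (an almost disjoint normalized sequence has a subsequence equivalent to a disjoint normalized sequence, which in $\Co\left(K\right)$ is isometrically the $c_{0}$-basis, and $\ell_{1}$ contains no copy of $c_{0}$); so infinite-dimensional dispersed subspaces without normalized weakly null sequences are not a priori excluded, and you must rule out that the \cite{fopt} subspace is of that kind. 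Without this step your argument only shows that (iii)--(iv') follow from (i), so the eight conditions are not yet all equivalent. To be fair, the paper's own closure places the very same burden elsewhere, on \emph{arbitrary} infinite-dimensional subspaces of $\Co\left(K\right)$ (citing \cite[Corollary 1.5.3]{ak}), so a normalized weakly null sequence has to be produced somewhere in either version; your version at least localizes the problem to one concrete subspace, but you should actually inspect the \cite{fopt} construction, or supply a soft argument, before calling the verification ``mild.''
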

\begin{proof}
First, note that (iii)$\Rightarrow$(iii') and (v)$\Rightarrow$(v') are trivial. (v')$\Rightarrow$(i) is proven in \cite[Theorem 6.1]{fopt}. (i)$\Leftrightarrow$(ii) follows from the fact that $\Co\left(K\right)_{a}$ is the set of all functions which vanish at the non-isolated points of $K$. (i)+(ii)$\Rightarrow$(iii) is proven similarly to Proposition \ref{fincod} noting that $\mathrm{un}_{\Co\left(K\right)_{a}}=\tau_{\Co\left(K\right)_{a}}$ is the pointwise topology, and in particular this topology has the Kadec-Pelczynski property.\medskip

Equivalence of (iii'), (iv) and (iv') follows from the fact that a sequence in $\Co\left(K\right)$ is weakly convergent iff it is aw-convergent and iff it is norm-bounded and pointwise convergent. To see this, first observe that the aw-topology is stronger than the weak topology, which is stronger than the pointwise topology. On the other hand, if $\left(f_{n}\right)_{n\in\N}\subset \Co\left(K\right)$ is norm-bounded and pointwise null, then $\left(\left|f_{n}\right|\right)_{n\in\N}$ is pointwise null, hence weakly null
(see \cite[Corollary 3.138]{fhhmz}), and so $f_{n}\xrightarrow[]{\mathrm{aw}}\0$.
\medskip

(iv)$\Rightarrow$(v): Assume that $E\subset F$ is an infinitely dimensional subspace. It contains a normalized weakly null sequence (see \cite[Corollary 1.5.3]{ak}). By the sequential Kadec-Pelczynski property this sequence contains an almost disjoint sequence, which means that $E$ is not dispersed.
\end{proof}

The implications (ii)$\Rightarrow$(iii)$\Rightarrow$(iii')$\Leftrightarrow$(iv')$\Leftrightarrow$(iv)$\Rightarrow$(v)$\Rightarrow$(v') remain valid for AM spaces (by the pointwise topology on an AM space we mean the weak topology induced by the $\R$-valued homomorphisms).

\begin{question}
Is the implication (v')$\Rightarrow$(ii) in Theorem \ref{kpck} valid for AM spaces?
\end{question}

Before characterizing when the aw-topology is boundedly exhaustive let us consider a related issue. Let $\mu\in F^{*}_{+}$, and let the continuous AL seminorm $\|\cdot\|_{\mu}$ be defined by $\|f\|_{\mu}:=\mu\left(\left|f\right|\right)$. It generates a topology weaker than the absolute weak topology. Taking the quotient with respect to $\ker\|\cdot\|_{\mu}$, and then the completion with respect to the quotient-norm, creates a homomorphism $J_{\mu}:F\to L_{1}\left(\mu\right)$ over a finite measure space (we denote the measure and the functional by the same letter because $\mu\left(f\right)=\int J_{\mu}fd\mu$, for every $f\in F$). It is easy to see that $J_{\mu}$ is order continuous iff $\mu$ is (this relies on the fact that a decreasing net which converges to $0_{F}$ has infimum $0_{F}$).

Recall that an operator $S\in\Lo\left(F, H\right)$ is called \emph{disjointly strictly singular (DSS)} if it is not bounded from below on the closed span of any disjoint sequence. The following result is based on \cite[Proposition 4.1, Remark 4.3 and Proposition 4.4(ii)]{gm2}.

\begin{proposition}\label{albdc}
Let $\mu\in F^{*}$, and let $J_{\mu}:F\to L_{1}\left(\mu\right)$ be as described above. Then, the following conditions are equivalent:
\item[(i)] $\mu\in F^{*}_{a}$;
\item[(ii)] $\|\cdot\|_{\mu}$ induces a boundedly exhaustive topology;
\item[(iii)] If $T:F\to H$ is such that $D:=T\oplus J_{\mu}$ is bounded from below, then $T$ is DNS;
\item[(iv)] If $E\subset F$ is such that $D:=Q_{E}\oplus J_{\mu}$ is bounded from below, then $E$ is dispersed;
\item[(v)] $J_{\mu}$ is disjointly strictly singular.
\end{proposition}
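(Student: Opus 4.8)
The plan is to prove a cycle of implications. The equivalence (i)$\Leftrightarrow$(ii) should be the bridge to the topological machinery: I would first show that $\mu\in F^{*}_{a}$ is precisely the condition that makes $\|\cdot\|_{\mu}$ boundedly exhaustive. For (i)$\Rightarrow$(ii), recall that a disjoint norm-bounded sequence $(f_{n})$ is always $\sigma(F,F^{*}_{a})$-null on its absolute values (this is essentially the content of the aaw-topology being boundedly exhaustive, Corollary \ref{aaw}); since $\mu\in F^{*}_{a}$, we get $\|f_{n}\|_{\mu}=\mu(|f_{n}|)\to 0$. For the converse (ii)$\Rightarrow$(i), I would argue contrapositively: if $\mu\notin F^{*}_{a}$, then by the definition of the order continuous part of the dual, $\mu$ fails order continuity, which produces an order bounded — hence norm bounded — disjoint sequence $(f_{n})$ with $\mu(|f_{n}|)\not\to 0$, violating bounded exhaustiveness.

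Next I would connect (ii) to the operator-theoretic conditions (iii) and (iv) via the complementarity framework. Note (iii) and (iv) are linked exactly as in the rest of the paper: (iii) for general $T$ specializes to (iv) by taking $T=Q_{E}$, and (iv) recovers (iii) by the standard quotient-map reduction, since $E$ is dispersed iff $Q_{E}$ is DNS (Theorem \ref{dns}(ii)). So it suffices to prove the equivalence of (ii) and (iii). The hypothesis that $D=T\oplus J_{\mu}$ is bounded below says exactly that $\rho_{T}+\|\cdot\|_{\mu}$ is equivalent to $\|\cdot\|$ (up to the norm-equivalence on the direct sum), i.e. that $\rho_{T}$ complements the topology $\tau_{\mu}$ induced by $\|\cdot\|_{\mu}$, in the language of Proposition \ref{compl}. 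Given (ii), $\tau_{\mu}$ is boundedly exhaustive, so by part (ii) of Proposition \ref{bex} the sequential complementarity of $\rho_{T}$ with $\tau_{\mu}$ forces $T$ to be DNS. This is the main conceptual step, and I expect it to go through cleanly once the hypotheses are translated into complementarity.

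Finally I would close the loop through (v). The implication (iii)$\Rightarrow$(v), or better its contrapositive, is natural: if $J_{\mu}$ were \emph{not} DSS, it would be bounded below on the closed span $G$ of some disjoint sequence; but one can then build an operator $T$ witnessing that $D=T\oplus J_{\mu}$ is bounded below while $T$ fails DNS on $G$, contradicting (iii). Conversely, for (v)$\Rightarrow$(i) I would again argue contrapositively using the characterization of $F^{*}_{a}$: failure of $\mu\in F^{*}_{a}$ yields an order bounded disjoint sequence on which $\mu(|f_{n}|)$ stays bounded below, and order boundedness lets one promote this to $J_{\mu}$ being bounded below on the span, i.e. $J_{\mu}$ not DSS. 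The main obstacle I anticipate is the careful handling in (v)$\Rightarrow$(i) of passing from ``$\mu(|f_{n}|)$ bounded below'' on a disjoint \emph{positive} sequence to genuine boundedness-below of the (linear) operator $J_{\mu}$ on the closed span — this requires controlling $\|J_{\mu}g\|_{L_{1}(\mu)}=\mu(|g|)$ from below for arbitrary linear combinations $g$, where disjointness of the $f_{n}$ (so $|\sum a_{n}f_{n}|=\sum|a_{n}||f_{n}|$) is exactly what makes the estimate tractable.
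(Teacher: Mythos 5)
Your overall architecture (a cycle through (ii), (iii), (iv), (v), anchored to (i) by the structure of $F^{*}_{a}$) matches the paper's, and several steps are essentially the paper's: (ii)$\Rightarrow$(iii) via complementarity/Proposition \ref{bex}, (iii)$\Rightarrow$(v) by taking $T=Q_{G}$ for $G$ the closed span of a disjoint sequence on which $J_{\mu}$ is bounded below (Corollary \ref{sumeq}), and the key computation $\left\|\sum a_{k}f_{k}\right\|_{\mu}=\sum\left|a_{k}\right|\|f_{k}\|_{\mu}$ for disjoint $f_{k}$ in the step back to (ii). However, there is a genuine gap in both places where you negate (i). You identify $\mu\notin F^{*}_{a}$ with ``$\mu$ fails order continuity'' and extract an \emph{order bounded} disjoint sequence with $\mu\left(\left|f_{n}\right|\right)\not\to0$. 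That is the wrong characterization: $F^{*}_{a}$ is the order continuous part of the Banach lattice $F^{*}$ (the maximal ideal on which the \emph{dual norm} is order continuous), not the band of order continuous functionals on $F$, and the correct test (the content of \cite[Theorem 2.3.3 and Proposition 2.4.10]{mn}, which the paper cites for (i)$\Leftrightarrow$(ii)) is against \emph{norm-bounded} disjoint sequences. The distinction is not cosmetic: for $F=\ell_{1}$ and $\mu=\left(1,1,\dots\right)\in\ell_{\8}$, $\mu$ is an order continuous functional and every order bounded disjoint sequence in $\ell_{1}$ is norm-null, so your test certifies nothing is wrong; yet $\mu\notin\left(\ell_{1}^{*}\right)_{a}=c_{0}$, $\|\cdot\|_{\mu}$ is the $\ell_{1}$-norm itself (certainly not boundedly exhaustive), and $J_{\mu}$ is an isomorphism, hence not DSS. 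So your proofs of (ii)$\Rightarrow$(i) and (v)$\Rightarrow$(i) as written establish only the order continuity of $\mu$ and do not close the cycle; you must run the same arguments with normalized (norm-bounded) disjoint sequences and invoke the Meyer-Nieberg characterization of $\left(F^{*}\right)_{a}$.

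Two smaller issues. First, for (i)$\Rightarrow$(ii) you lean on Corollary \ref{aaw}, which in the paper is \emph{derived from} condition (iii) of this very proposition; cite the external fact instead. Second, your claim that (iv) ``recovers'' (iii) by a quotient-map reduction is unsubstantiated: a general $T$ with $T\oplus J_{\mu}$ bounded below is not a quotient map, and DNS of $T$ is not equivalent to dispersedness of $\ker T$. The paper avoids this by routing (iv)$\Rightarrow$(v)$\Rightarrow$(ii)$\Rightarrow$(iii); you can do the same, since your witness in (iii)$\Rightarrow$(v) is already a quotient map and therefore only uses the instance (iv).
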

\begin{proof}
(i)$\Leftrightarrow$(ii) follows from \cite[Theorem 2.3.3 and Proposition 2.4.10]{mn} (cf. \cite[Theorem 2.3]{glx}).

(ii)$\Rightarrow$(iii): Assume that $D$ is an isomorphism. If $\left(f_{n}\right)_{n\in\N}$ is normalized disjoint, then $\|f_{n}\|_{\mu}\to0$. Since $D$ is an isomorphism, we conclude that $\|Tf_{n}\|\not\to0$. Theorem \ref{dns} now allows us to conclude that $T$ is DNS.\medskip

(iii)$\Rightarrow$(iv) follows from the fact that $E\subset F$ is dispersed iff the quotient with respect to $E$ is DNS.

(iv)$\Rightarrow$(v): If $E\subset F$ is such that $J$ is bounded from below on $E$, then by Corollary \ref{sumeq} $\rho_{E}+\|\cdot\|_{\mu}$ is equivalent to $\|\cdot\|$. Therefore, $D$ is an isomorphism, and so $E$ is dispersed. It now follows from \cite[Proposition 2.6]{gmm} that $J_{\mu}$ is DSS.\medskip

(v)$\Rightarrow$(ii): Assume that a disjoint $\left(f_{n}\right)_{n\in\N}\subset\So_{F}$ is such that $\|f_{n}\|_{\mu}\ge r>0$, for every $n\in\N$. Then, for every $a_{1},...,a_{n}\in\R$ we have
\begin{align*}
\left\|a_{1}f_{1}+...+a_{n}f_{n}\right\|_{\mu}&=\left|a_{1}\right|\|f_{1}\|_{\mu}+...+\left|a_{n}\right|\|f_{n}\|_{\mu}\ge r\left(\left|a_{1}\right|+...+\left|a_{1}\right|\right)\\&=r\left(\left|a_{1}\right|\|f_{1}\|+...+\left|a_{n}\right|\|f_{n}\|\right)\ge r\|a_{1}f_{1}+...+a_{n}f_{n}\|.
\end{align*}
Hence, $\|\cdot\|$ and $\|\cdot\|_{\mu}$ are equivalent on the span of $\left\{f_{n}\right\}_{n\in\N}$, and thus on the closed span due to Proposition \ref{compl2}. This contradicts the DSS property of $J_{\mu}$.
\end{proof}

Using the condition (iii) and the definition of the aaw-topology, we can easily get the following fact.

\begin{corollary}\label{aaw}
The aaw-topology is boundedly uniformly exhaustive.
\end{corollary}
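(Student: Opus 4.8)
The plan is to reduce to a single positive, order continuous functional and then reproduce the counting in the proof of Proposition~\ref{uaw}. A basic aaw-neighbourhood of $0_{F}$ has the form $U=\bigcap_{i=1}^{m}\left\{f:\mu_{i}\left(\left|f\right|\right)<\varepsilon\right\}$ with positive $\mu_{1},\ldots,\mu_{m}\in F^{*}_{a}$. Putting $\mu:=\sum_{i=1}^{m}\mu_{i}$, which again lies in $F^{*}_{a}$ since the latter is an ideal of $F^{*}$, one has $\left\{f:\mu\left(\left|f\right|\right)<\varepsilon\right\}\subseteq U$; as a smaller neighbourhood only makes the conclusion harder, it suffices to treat neighbourhoods of the form $U_{\mu,\varepsilon}:=\left\{f:\mu\left(\left|f\right|\right)<\varepsilon\right\}$ for a single positive $\mu\in F^{*}_{a}$.

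First I would dispose of bounded exhaustiveness: by Proposition~\ref{albdc} the membership $\mu\in F^{*}_{a}$ is equivalent to condition (iii) and, via (ii), to $\left\|\cdot\right\|_{\mu}$ inducing a boundedly exhaustive topology, so taking the supremum over $\mu$ shows the aaw-topology itself is boundedly exhaustive. The uniform refinement rests on the \emph{additivity} of $\mu$ on disjoint elements: if $f_{1},\ldots,f_{n}$ are disjoint then $\left|f_{1}\right|,\ldots,\left|f_{n}\right|$ are disjoint and $\sum_{k=1}^{n}\mu\left(\left|f_{k}\right|\right)=\mu\left(\bigvee_{k=1}^{n}\left|f_{k}\right|\right)$. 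On an order interval $\left[-g,g\right]$ this is immediate to exploit: each $\left|f_{k}\right|\le g$ forces $\bigvee_{k=1}^{n}\left|f_{k}\right|\le g$, so if disjoint $f_{1},\ldots,f_{n}\in\left[-g,g\right]$ all lie outside $U_{\mu,\varepsilon}$ then $n\varepsilon\le\sum_{k}\mu\left(\left|f_{k}\right|\right)=\mu\left(\bigvee_{k}\left|f_{k}\right|\right)\le\mu\left(g\right)$, whence $n\le\mu\left(g\right)/\varepsilon$ — exactly the bound and argument of Proposition~\ref{uaw}, with the order bound $g$ playing the role there played by the truncating element $h$. For a general norm-bounded set one replaces the crude estimate $\bigvee_{k}\left|f_{k}\right|\le g$ by the substantive observation that, since $\mu\in F^{*}_{a}$, the image $J_{\mu}\left(\Bob_{F}\right)$ is uniformly $\mu$-integrable (equivalently, relatively weakly compact in $L_{1}\left(\mu\right)$), which again caps the number of disjoint $f_{k}$ with $\mu\left(\left|f_{k}\right|\right)\ge\varepsilon$.

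The step I expect to be the crux is precisely this passage from exhaustiveness to \emph{uniform} exhaustiveness, that is, from ``no disjoint bounded sequence survives $U$'' to a single integer $n$ that works for all tuples at once. This is the point at which a merely subadditive gauge can fail — the analogue of an exhaustive submeasure that is not uniformly exhaustive — and it is rescued here by the linearity of $\mu$, through the additive identity above together with a finite ambient bound ($\mu\left(g\right)$ on an order interval, or the uniform integrability of the ball in general). In other words, the content of the corollary is exactly that the functionals defining the aaw-topology are genuine measures rather than submeasures, which is what condition (iii) of Proposition~\ref{albdc} encodes; once that is in hand, the numerical estimate is routine.
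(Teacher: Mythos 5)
Your overall strategy is sound, and it supplies considerably more detail than the paper, which derives the corollary in one line from Proposition \ref{albdc} and the definition of the aaw-topology. The reduction to a single positive $\mu\in F^{*}_{a}$ is fine ($F^{*}_{a}$ is an ideal, hence closed under sums of positive elements), bounded exhaustiveness via Proposition \ref{albdc} is fine, and---importantly---you correctly recognize that the counting of Proposition \ref{uaw} does \emph{not} transfer verbatim to norm-bounded tuples: for disjoint $f_{1},\dots,f_{n}\in\Bob_{F}$ the supremum $\bigvee_{k}\left|f_{k}\right|$ need not stay in any fixed multiple of $\Bob_{F}$ (take $n$ disjoint unit vectors in $\ell_{1}$), so additivity of $\mu$ alone gives nothing and a truncating element is indispensable.

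The genuine gap is that this truncating element is precisely what you never produce. You assert that $\mu\in F^{*}_{a}$ forces $J_{\mu}\left(\Bob_{F}\right)$ to be uniformly integrable and that this ``caps the number'' of disjoint $f_{k}$ with $\mu\left(\left|f_{k}\right|\right)\ge\varepsilon$, but both halves of that sentence are left unproved, and the first half carries the entire content of the corollary: it is the nontrivial direction of the truncation characterization of M-weakly compact functionals. What you need is: for every $\varepsilon>0$ there is $u\in F_{+}$ with $\mu\left(\left(\left|f\right|-u\right)^{+}\right)<\frac{\varepsilon}{2}$ for all $f\in\Bob_{F}$. This is true for $\mu\in F^{*}_{a}$ and is covered by the same references the paper invokes for Proposition \ref{albdc} (see \cite[Proposition 2.4.10]{mn}), but it must be cited or proved (the proof is the standard disjointification argument: failure of the truncation property lets one build a norm-bounded disjoint sequence on which $\mu$ does not vanish, contradicting $\mu\in F^{*}_{a}$). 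Once $u$ is available, the conclusion is exactly your order-interval computation applied to $\left|f_{k}\right|\wedge u$: if disjoint $f_{1},\dots,f_{n}\in\Bob_{F}$ satisfy $\mu\left(\left|f_{k}\right|\right)\ge\varepsilon$, then $\mu\left(\left|f_{k}\right|\wedge u\right)>\frac{\varepsilon}{2}$ while $\sum_{k}\mu\left(\left|f_{k}\right|\wedge u\right)=\mu\left(\bigvee_{k}\left(\left|f_{k}\right|\wedge u\right)\right)\le\mu\left(u\right)$, whence $n<\frac{2\mu\left(u\right)}{\varepsilon}$. So the argument is repairable by a citation plus two lines, but as written the crux---which you yourself identify as the crux---is an assertion rather than an argument.
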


The notion of a dispersed subspace allows to characterize Banach lattices with order continuous duals.

\begin{theorem}\label{dual}
The following conditions are equivalent:
\item[(i)] $F^{*}$ is order continuous;
\item[(ii)] Aw- and aaw-topologies agree on $F$;
\item[(iii)] The aw-topology is boundedly exhaustive;
\item[(iv)] If $T:F\to H$ complements the aw-topology, then $T$ is ($\8$-)DNS;
\item[(v)] If aw agrees with the norm topology on a subspace, then this subspace is ($\8$-)dispersed;
\item[(vi)] Aw does not agree with the norm topology on the closed span of any disjoint sequence.
\end{theorem}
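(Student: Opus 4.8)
The plan is to make condition (iii) (bounded exhaustiveness of the aw-topology) the hub. First I would establish the equivalence (i)$\Leftrightarrow$(iii) by unwinding the aw-topology into its defining AL-seminorms and invoking Proposition \ref{albdc}; then I would obtain the aaw-condition (ii) for free from Corollary \ref{aaw}; and finally I would close a purely topological cycle (iii)$\Rightarrow$(iv)$\Rightarrow$(v)$\Rightarrow$(vi)$\Rightarrow$(iii) using the machinery of Section \ref{dispe}. Throughout I would treat the net versions, noting that the sequential versions (the parenthetical ``$\8$-'' alternatives) are handled identically.

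For (i)$\Leftrightarrow$(iii), the key observation is that $f_{n}\xrightarrow{\mathrm{aw}}0_{F}$ means $\nu\left(\left|f_{n}\right|\right)\to 0$ for every $\nu\in F^{*}_{+}$, so the aw-topology is boundedly exhaustive precisely when each seminorm $\|\cdot\|_{\nu}=\nu\left(\left|\cdot\right|\right)$, $\nu\in F^{*}_{+}$, induces a boundedly exhaustive topology. By the equivalence (i)$\Leftrightarrow$(ii) of Proposition \ref{albdc} this holds for a given $\nu$ iff $\nu\in F^{*}_{a}$; hence (iii) amounts to $F^{*}_{+}\subseteq F^{*}_{a}$, which, since $F^{*}_{a}$ is an ideal, is exactly $F^{*}=F^{*}_{a}$, i.e. (i). This also yields (i)$\Rightarrow$(ii) at once: if $F^{*}=F^{*}_{a}$ then $\sigma\left(F,F^{*}\right)=\sigma\left(F,F^{*}_{a}\right)$, so the two topologies coincide by definition. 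The reverse step (ii)$\Rightarrow$(iii) is immediate from Corollary \ref{aaw}, since the aaw-topology is always boundedly exhaustive.

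The three easy arrows of the topological cycle would go as follows. The implication (iii)$\Rightarrow$(iv) is a direct application of Proposition \ref{bex}(ii) with $\tau=\mathrm{aw}$. For (iv)$\Rightarrow$(v) I would apply (iv) to the quotient map $Q_{E}$: by Proposition \ref{compl2}, aw agrees with the norm topology on $E$ exactly when $\rho_{E}$ is aw-complementary, i.e. when $Q_{E}$ complements aw; then (iv) makes $Q_{E}$ be ($\8$-)DNS, which is equivalent to $E$ being ($\8$-)dispersed. Finally (v)$\Rightarrow$(vi) is a one-line contradiction: the closed span $G$ of a disjoint sequence contains a normalized disjoint (hence almost disjoint) sequence and so is never dispersed, so aw cannot agree with the norm topology on $G$, as otherwise (v) would force $G$ to be dispersed.

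The genuine work is (vi)$\Rightarrow$(iii), argued by contraposition. Assuming aw is not boundedly exhaustive, some norm-bounded disjoint sequence fails to be aw-null, so there are $\nu\in F^{*}_{+}$, $\varepsilon>0$ and, after passing to a subsequence, a disjoint $\left(g_{k}\right)$ with $\|g_{k}\|\le M$ and $\nu\left(\left|g_{k}\right|\right)\ge\varepsilon$. Normalizing to $h_{k}:=g_{k}\slash\|g_{k}\|$ gives a disjoint normalized sequence with $\|h_{k}\|_{\nu}\ge\varepsilon\slash M=:r>0$, and by disjointness $\|\sum_{k}a_{k}h_{k}\|_{\nu}=\sum_{k}\left|a_{k}\right|\nu\left(\left|h_{k}\right|\right)\ge r\sum_{k}\left|a_{k}\right|\ge r\|\sum_{k}a_{k}h_{k}\|$ for all scalars, so $\|\cdot\|_{\nu}$ is bounded below by a multiple of $\|\cdot\|$ on $\spa\left\{h_{k}\right\}$. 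Since $\|\cdot\|_{\nu}$ is a continuous seminorm whose topology is weaker than aw, which is weaker than the norm, Proposition \ref{compl2} shows that the $\|\cdot\|_{\nu}$-topology, and a fortiori the aw-topology, agrees with the norm topology on the closed span of $\left(h_{k}\right)$, contradicting (vi). I expect this construction (the normalization, the AL-estimate, and the passage to the closed span via Proposition \ref{compl2}) to be the only step requiring more than a citation; all remaining arrows reduce to the earlier propositions.
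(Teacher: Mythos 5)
Your proposal is correct and follows essentially the same route as the paper: the easy arrows (i)$\Rightarrow$(ii)$\Rightarrow$(iii)$\Rightarrow$(iv)$\Rightarrow$(v)$\Rightarrow$(vi) are handled with exactly the same citations (Corollary \ref{aaw}, Proposition \ref{bex}, the quotient-map trick, non-dispersedness of spans of disjoint sequences), and the substantive step rests on the same AL-seminorm machinery of Proposition \ref{albdc}. The only difference is bookkeeping: the paper re-enters the cycle via (vi)$\Rightarrow$(i) by citing the DSS characterization (v)$\Rightarrow$(i) of Proposition \ref{albdc}, whereas you prove (i)$\Leftrightarrow$(iii) separately from (i)$\Leftrightarrow$(ii) of that proposition and close the loop at (vi)$\Rightarrow$(iii) by inlining the very computation that proves (v)$\Rightarrow$(ii) there --- a cosmetic reorganization of the same argument.
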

\begin{proof}
(i)$\Rightarrow$(ii) follows from the definitions of the two topologies. (ii)$\Rightarrow$(iii) follows Corollary \ref{aaw}. (iii) implies the ``$\8$-'' version of (iv) by Proposition \ref{bex}. Taking $T$ the quotient with respect to a subspace allows to derive the (``$\8$-'') version of (v) from the (``$\8$-'') version of (iv). The non-``$\8$-'' version of (v) implies (vi) because the closed span of a disjoint sequence is not dispersed.\medskip

(vi)$\Rightarrow$(i): Fix $\mu\in F^{*}$ and use the notations introduced before Proposition \ref{albdc}. By (vi), the topology $\tau$ induced by $\|\cdot\|_{\mu}$ is strictly weaker than the norm topology on the closed span of any disjoint sequence. In other words, $J_{\mu}$ is DSS, and so by Proposition \ref{albdc} it follows that $\mu\in F^{*}_{a}$.
\end{proof}

Let us slightly extend the notion of an almost disjoint sequence. We will call $\left(e_{n}\right)_{n\in\N}\subset F$ \emph{asymptotically disjoint} if there is a disjoint sequence $\left(f_{n}\right)_{n\in\N}$, which is bounded from above and below with $e_{n}-f_{n}\to0_{F}$.

\begin{corollary}If $F_{a}$ has finite co-dimension in $F$, then the following conditions are equivalent:
\item[(i)] $F^{*}$ is order continuous;
\item[(ii)] The operators which complement the aw-topology are precisely the ($\8$-)DNS operators;
\item[(iii)] Aw topology agrees with the norm topology precisely on ($\8$-)dispersed subspaces;
\item[(iv)] A norm-bounded set is aw-separated from $0_{F}$ iff it contains no asymptotically disjoint sequences.
\end{corollary}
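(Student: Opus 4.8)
The plan is to leverage the preceding Theorem \ref{dual} together with the Kadec--Pelczynski machinery that becomes available once $F_{a}$ has finite co-dimension. The starting observation is that Proposition \ref{fincod} tells us the aw-topology has the Kadec--Pelczynski property in this setting. This is precisely the extra ingredient we do not have in the general Theorem \ref{dual}, and it upgrades the one-directional implications there (complementing $\Rightarrow$ being $\infty$-DNS, agreeing with the norm $\Rightarrow$ being $\infty$-dispersed) into equivalences, since Proposition \ref{kpdns} supplies the reverse direction: every DNS operator complements a topology with the Kadec--Pelczynski property, and that topology agrees with the norm on every dispersed subspace.

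First I would handle (i)$\Leftrightarrow$(ii) and (i)$\Leftrightarrow$(iii). For (ii), combine Theorem \ref{dual}(iv) with Proposition \ref{kpdns}(ii): assuming $F^{*}$ order continuous, Theorem \ref{dual} gives that every aw-complementing operator is $\infty$-DNS (hence DNS), while Proposition \ref{kpdns}(ii) applied to the aw-topology (which has the Kadec--Pelczynski property by Proposition \ref{fincod}) gives that every DNS operator complements aw. Together these say the two classes coincide, which is (ii). The converse (ii)$\Rightarrow$(i) is immediate because (ii) entails the implication in Theorem \ref{dual}(iv), which already forces (i). The argument for (iii) is entirely parallel, using Theorem \ref{dual}(v) and Proposition \ref{kpdns}(i), reading everything through the quotient map $Q_{E}$ so that ``aw agrees with the norm on $E$'' corresponds to ``$Q_{E}$ complements aw.''

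The genuinely new content is (iv), and this is where I expect the main work to lie. The cleanest route is to show (iii)$\Leftrightarrow$(iv) directly, interpreting the condition through Proposition \ref{compl}. A norm-bounded set $A$ being aw-separated from $0_{F}$ means there is an aw-neighborhood $U$ of $0_{F}$ disjoint from $A$; by the Kadec--Pelczynski property of aw (Proposition \ref{fincod}), any aw-null normalized net admits, along a cofinal selection, an almost disjoint subnet, and the notion of asymptotic disjointness is the natural order-bounded relaxation matching the finite co-dimension of $F_{a}$. I would argue the contrapositive in each direction: if $A$ contains an asymptotically disjoint sequence $\left(e_{n}\right)_{n\in\N}$ with disjoint order-bounded $\left(f_{n}\right)_{n\in\N}$ and $e_{n}-f_{n}\to 0_{F}$, then since $F^{*}$ is order continuous the disjoint order-bounded $f_{n}$ are aw-null (an order bounded disjoint sequence is aw-null, and order continuity of $F^{*}$ via Theorem \ref{dual}(iii) makes aw boundedly exhaustive), so $e_{n}\xrightarrow{\mathrm{aw}}0_{F}$, contradicting aw-separation of $A$. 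Conversely, if $A$ is not aw-separated from $0_{F}$, extract an aw-null net in $A$ and apply the Kadec--Pelczynski property to produce an almost disjoint, hence asymptotically disjoint, sequence inside (a bounded perturbation of) $A$.

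The main obstacle will be matching the precise quantifiers in (iv): ``asymptotically disjoint'' is the order-bounded variant of ``almost disjoint,'' and one must verify that the order-boundedness of the approximating disjoint sequence is automatically available here, which is exactly what the finite co-dimension of $F_{a}$ buys us (disjoint sequences in $F$ are, modulo the finite-dimensional complement, supported in the order continuous part $F_{a}$, where boundedly exhaustive behavior under order continuity of $F^{*}$ applies). I would make this bookkeeping explicit by projecting onto $F_{a}$ as in the proof of Proposition \ref{fincod}, controlling the finite-dimensional part separately, and then invoking Theorem \ref{dual}(iii) on the order continuous part to conclude.
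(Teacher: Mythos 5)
Your proposal matches the paper's own proof in all essentials: the equivalence of (i)--(iii) is obtained exactly as you describe, by combining Proposition \ref{fincod} and Proposition \ref{kpdns} (which supply the reverse implications) with Theorem \ref{dual} (which supplies the forward ones), and condition (iv) is handled by the same two ingredients --- bounded exhaustiveness of the aw-topology under (i) shows an asymptotically disjoint sequence is aw-null, while the Kadec--Pelczynski property extracts an asymptotically disjoint sequence from any norm-bounded set whose aw-closure contains $0_{F}$. One small correction: ``bounded from above and below'' in the definition of an asymptotically disjoint sequence refers to the norms of the $f_{n}$ (it relaxes ``normalized''), not to order boundedness, so the aw-nullity of such sequences must indeed come from bounded exhaustiveness via Theorem \ref{dual}(iii) (as you also correctly invoke) rather than from the unconditional fact that order bounded disjoint sequences are aw-null, and no additional projection onto $F_{a}$ is needed beyond what is already built into Proposition \ref{fincod}.
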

\begin{proof}
First, according to Proposition \ref{fincod} the aw-topology has the Kadec-Pelczynski property, and so by Proposition \ref{kpdns} the aw-topology agrees with the norm topology on every dispersed subspace, every DNS operator complements the aw-topology, and if $0_{F}$ is in the aw-closure of a norm-bounded set, this set contains an asymptotically disjoint sequence. Hence, equivalence of (i), (ii) and (iii) follows from Theorem \ref{dual}.\medskip

(i)$\Rightarrow$(iv): By Theorem \ref{dual} the aw-topology is boundedly exhaustive, hence any asymptotically disjoint sequence is aw-null. It now follows that if a norm-bounded set is aw-separated from $0_{F}$, it cannot contain such a sequence.

(iv)$\Rightarrow$(ii): Assume that aw topology agrees with the norm topology on a subspace $E$. Then, $\So_{E}$ is aw-separated from $0_{F}$ and so contains no asymptotically disjoint sequences. It follows that $E$ is dispersed.
\end{proof}

Note that the last condition is reminiscent of \cite[Theorem 1.5.6]{ak} but for the absolute weak topology instead of the weak topology.

Recall that $A\subset F$ is called \emph{almost order bounded} if for every $n\in\N$ there is $f_{n}\in F_{+}$ such that $A\subset \left[-f_{n},f_{n}\right]+\frac{1}{n}\Bob_{F}$. Note that the un-topology agrees with the norm topology on every such set (see \cite[Proposition 2.21]{taylor}). In particular, if $F$ is order continuous, every asymptotically disjoint sequence is un-null, hence cannot be almost order bounded. It follows that if $E\subset F$ is a closed subspace such that $\Bob_{E}$ is almost order bounded, then $E$ is dispersed.

\begin{question}
Is the converse to the last claim true? 
\end{question}

\section{The role of various convergences}\label{conv}

We can go beyond locally solid linear topologies considered in this section in two directions. First, let $\tau$ be a group topology on a Banach lattice $F$, which is \emph{locally balanced} in the sense that it has a local base at $0_{F}$ consisting of balanced sets. Note that if in this case $\left(f_{p}\right)_{p\in P}\subset F$ is $\tau$-null and $\left(r_{p}\right)_{p\in P}\subset \R$ is bounded, then $r_{p}f_{p}\xrightarrow[]{\tau}0_{F}$ (see e.g. \cite[Proposition 5.1]{erz0}). We also do not assume that $\tau$ is weaker than the norm topology. Proposition \ref{compl} remains valid for such a $\tau$. It is also easy to see that for a subspace $E\subset F$ we have that $\tau$ is stronger than the norm topology on $E$ if and only if $\So_{E}$ is $\tau$-separated from $0_{F}$. Every locally solid (but not necessarily linear) topology is locally balanced.\medskip

Second, aside of topologies, one can consider a number of locally solid convergences on a Banach lattice (see e.g. \cite{erz1}, in particular an account on order and unbounded order convergences). The aforementioned two convergences are linear, but we will now introduce a non-linear one. First, note that the order convergence can be defined on Boolean algebras. Next, recall that the collection $\Bo_{F}$ of all bands on $F$ is a complete Boolean algebra. We define \emph{polar convergence} on $F$ (it was first introduced in \cite{ball} using filters rather than nets) by $f_{p}\xrightarrow[]{\mathrm{p}}0_{F}$ if $\left\{f_{p}\right\}^{dd}\xrightarrow[]{\mathrm{o}} \left\{0_{F}\right\}$ (order convergence in $\Bo_{F}$), and $f_{p}\xrightarrow[]{\mathrm{p}}f$ if $f_{p}-f\xrightarrow[]{\mathrm{p}}0_{F}$. It is easy to see that polar convergence is a Hausdorff locally solid group convergence on $F$. Furthermore, it is stronger than uo-convergence, but every disjoint sequence is polar-null (this will be proven in \cite{erzp}).\medskip

If $F$ is an order continuous Banach lattice, then there is a unique Hausdorff order continuous topology $\pi$ on $\Bo_{F}$, which is uniformly exhaustive (see \cite[Corollary 4.9]{weber} for uniqueness, for existence follow the ideas from \cite[Theorem 7]{labuda} and \cite[Proposition 3.2]{conradie}, in particular this topology may be viewed as a restriction of the uaw-topology, hence it is uniformly exhaustive). We now define $f_{p}\xrightarrow[]{\pi_{F}}0_{F}$ if $\left\{f_{p}\right\}^{dd}\xrightarrow[]{\pi} \left\{0_{F}\right\}$ and $f_{p}\xrightarrow[]{\pi_{F}}f$ if $f_{p}-f\xrightarrow[]{\pi_{F}}0_{F}$. Clearly, this topology is weaker than polar convergence, and is uniformly exhaustive. Note that if $\mu$ is a finite measure and $F$ embeds as an order dense ideal in $L_{1}\left(\mu\right)$, then the polar topology is metrized by $d\left(f,g\right):=\mu\left\{f\ne g\right\}$, and this particular case was considered in \cite{gm2}.\medskip

As with topologies, $T:F\to H$ \emph{complements} a locally solid convergence $\eta$ if no net in $\So_{F}$ is simultaneously $\eta$-null and $\rho_{T}$-null. Note that the proof of (i)$\Rightarrow$(iii) of Proposition \ref{compl} still holds in this more general context, and so whenever $\left(f_{p}\right)_{p\in P}\subset F$ is such that $Tf_{p}\to 0_{E}$ and $f_{p}\xrightarrow[]{\eta}0_{F}$, then $f_{p}\to 0_{F}$. If in this case $\theta$ is a convergence which is stronger than $\eta$, then $T$ complements $\theta$ as well. We therefore can add more equivalent conditions to Theorem \ref{udns}:
\begin{itemize}
\item[(iv')] $T$ (sequentially) complements the uo convergence;
\item[(v)] $T$ (sequentially) complements the polar topology;
\item[(v')] $T$ (sequentially) complements the polar convergence.
\end{itemize}
Since both uo convergence and polar topology are weaker than polar convergence but stronger than $\tau_{F}$, we conclude that all the listed conditions follow from the net version of (iv), and imply the sequential version of (v'). On the other hand, this last condition implies that $T$ is DNS, because every disjoint sequence is polar-null and the same argument as in the proof of Proposition \ref{bex}.\medskip

It follows from Corollary \ref{disp} that if $F$ is order continuous, then on the dispersed subspaces the norm topology is weaker than each of polar-convergence, polar topology and uo-convergence. The converse to the last claim is also true (even without the assumption of order continuity of $F$).

\begin{proposition}
Let $F$ be a Banach lattice, and let $E\subset F$ be a closed subspace such that every uo-null sequence in $E$ is norm-null. Then, $E$ is dispersed.
\end{proposition}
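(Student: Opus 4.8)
The plan is to argue by contradiction. Suppose $E$ is \emph{not} dispersed. By the definition recalled at the start of Section \ref{dnss}, this produces a normalized sequence $\left(e_{n}\right)_{n\in\N}\subset\So_{E}$ together with a disjoint sequence $\left(f_{n}\right)_{n\in\N}\subset\So_{F}$ with $\|e_{n}-f_{n}\|\to0$. My goal is to extract from $\left(e_{n}\right)_{n\in\N}$ a subsequence which is uo-null in $F$ but still lies on $\So_{E}$ (so it is not norm-null), directly contradicting the hypothesis that every uo-null sequence contained in $E$ is norm-null.

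Two ingredients are available. First, a disjoint sequence is uo-null: this was already observed in Section \ref{conv}, since every disjoint sequence is polar-null and polar convergence is stronger than uo-convergence, so $f_{n}\xrightarrow[]{\mathrm{uo}}0_{F}$. Second, uo-convergence is a linear convergence (closed under addition), and order convergence implies uo-convergence. The main obstacle is that the error terms $e_{n}-f_{n}$, although norm-null, need \emph{not} be uo-null, since norm convergence does not imply uo-convergence (the typewriter sequence in $L_{1}$ is the standard example); hence I cannot simply add the uo-nullity of $\left(f_{n}\right)_{n\in\N}$ to that of the differences. The remedy is to pass to a subsequence $\left(n_{k}\right)_{k\in\N}$ with $\sum_{k}\|e_{n_{k}}-f_{n_{k}}\|<\8$. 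Setting $h_{k}:=e_{n_{k}}-f_{n_{k}}$, the series $\sum_{k}\left|h_{k}\right|$ then converges in norm to some $g\in F_{+}$, its partial sums increase to $g$, and so the tails $r_{m}:=\sum_{j\ge m}\left|h_{j}\right|$ satisfy $r_{m}\downarrow0_{F}$. Since $\left|h_{k}\right|\le r_{m}$ whenever $k\ge m$, this yields $h_{k}\xrightarrow[]{\mathrm{o}}0_{F}$, and therefore $h_{k}\xrightarrow[]{\mathrm{uo}}0_{F}$.

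Combining the two via linearity of uo-convergence, $e_{n_{k}}=f_{n_{k}}+h_{k}\xrightarrow[]{\mathrm{uo}}0_{F}$, while $\|e_{n_{k}}\|=1$ for every $k$ and $e_{n_{k}}\in E$. This contradicts the assumption that uo-null sequences in $E$ are norm-null, and so $E$ is dispersed. The only delicate step is the passage to a summable subsequence, which upgrades the merely norm-null differences to order-null (hence uo-null) ones; the remainder is a direct application of the disjoint-sequence fact and the linearity of uo-convergence noted above.
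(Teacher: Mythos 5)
Your proof is correct and follows essentially the same route as the paper: extract a subsequence along which the differences $e_{n_k}-f_{n_k}$ are uo-null, combine with the uo-nullity of disjoint sequences, and contradict the hypothesis. The only difference is that the paper cites an exercise from Abramovich--Aliprantis for the subsequence step, whereas you prove it directly via the summable-subsequence/order-convergence argument, which is exactly the standard proof of that exercise.
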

\begin{proof}
Let $\left(e_{n}\right)_{n\in\N}\subset\So_{E}$ be such that there is a disjoint sequence $\left(f_{n}\right)_{n\in\N}\subset\So_{F}$ with $e_{n}-f_{n}\to0_{F}$. There is an increasing sequence $\left(n_{k}\right)_{k\in\N}\subset\N$ such that $e_{n_{k}}-f_{n_{k}}\xrightarrow[]{\mathrm{uo}}0_{F}$ (see \cite[Exercise 1.2.13]{aa}). Since every disjoint sequence is uo-null (see \cite[Corollary 3.6]{gtx}), we conclude that $e_{n_{k}}\xrightarrow[]{\mathrm{uo}}0_{F}$, hence $\|e_{n_{k}}\|\to 0$. Contradiction.
\end{proof}

It is unknown whether every subspace $E$ such that the polar convergence or topology is stronger than the norm topology on $E$ is dispersed.

\section{The role of the Positive Schure Property}\label{psps}

Staying close to the configuration described before Proposition \ref{albdc} allows us to recover and improve more results from \cite{gm2}. If $F$ is an order continuous Banach lattice with a weak unit one can always find a strictly positive $\mu\in F^{*}$. In this case $\|\cdot\|_{\mu}$ is a norm, and so $J_{\mu}$ continuously embeds $F$ into $L_{1}\left(\mu\right)$ as an order dense ideal (see \cite[Theorem 2.7.8]{mn}). Hence, $\tau_{F}$ is the restriction of $\tau_{L_{1}\left(\mu\right)}=\mathrm{un}_{L_{1}\left(\mu\right)}$, which is the topology of convergence in measure (see \cite[Corollary 4.2]{dot}). The content of \cite[Lemma 4.2]{gm2} now reads as the Kadec-Pelczynski property for the topology induced by $\|\cdot\|_{\mu}$ (which is true, since it is stronger than the un-topology on $F$), and this topology coincides with the norm topology on each dispersed subspace (which follows from part (i) of Proposition \ref{kpdns}). We now recover and slightly extend \cite[Theorem 4.5(2)]{gm2}.

\begin{theorem}\label{disref}
If $F$ is order continuous, every dispersed subspace of $F$ is reflexive, and every DNS operator is tauberian.
\end{theorem}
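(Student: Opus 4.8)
The plan is to reduce both claims to the reflexivity statement and then establish that reflexivity using the Kadec--Pelczynski property of $\tau_F$ together with Proposition \ref{taub}. By Proposition \ref{taub}, it suffices to prove the first assertion: if every dispersed subspace of $F$ is reflexive, then every DNS operator from $F$ is tauberian. So the entire content lies in showing that dispersed subspaces of an order continuous $F$ are reflexive.

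First I would reduce to the separable case. A subspace $E$ is reflexive iff all of its separable subspaces are reflexive, and a subspace of a dispersed subspace is again dispersed; so it is enough to show that every separable dispersed $E$ is reflexive. Given such an $E$, I would pass to the closed sublattice (or the closed order-dense ideal) $G$ of $F$ that it generates, which is again order continuous and is now separable. On a separable order continuous Banach lattice with a weak unit one can choose a strictly positive $\mu\in G^{*}$, and then $J_{\mu}$ embeds $G$ into $L_1(\mu)$ as an order dense ideal, so that $\tau_G$ is exactly the restriction of the convergence-in-measure topology on $L_1(\mu)$, as explained in the paragraph preceding the theorem.

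The key step is then the following. By Corollary \ref{tauf}, $\tau_F$ (equivalently $\tau_G$, since $G$ is regular) has the Kadec--Pelczynski property and, since $E$ is dispersed, Corollary \ref{disp} tells us that $\tau_F$ agrees with the norm topology on $E$. In the $L_1(\mu)$ picture this says precisely that on $E$ the norm topology coincides with the topology of convergence in measure; equivalently, the $\|\cdot\|_{\mu}$-topology agrees with the norm topology on $E$. Now I would invoke the classical Kadec--Pelczynski alternative for subspaces of $L_1$ of a finite measure: a closed subspace of $L_1(\mu)$ on which the $L_1$-norm and the topology of convergence in measure coincide must be reflexive (indeed it embeds into $L_p$ for some $p>1$, hence contains no copy of $\ell_1$ or $c_0$ and is reflexive). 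This yields reflexivity of $E$.

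The main obstacle I anticipate is the reduction to a setting with a strictly positive functional, i.e. handling the lack of a weak unit cleanly: one must be careful that the sublattice or ideal $G$ generated by the separable $E$ is itself order continuous and separable so that a strictly positive $\mu$ exists, and that the intrinsic $\tau_G$ really is the restriction of $\tau_F$ (this is where regularity of $G$ and Proposition \ref{ocont} / the $\tau$-restriction results are used). Once $E$ sits inside some $L_1(\mu)$ with norm- and measure-topologies agreeing on it, reflexivity is the standard Kadec--Pelczynski dichotomy and the tauberian conclusion is immediate from Proposition \ref{taub}.
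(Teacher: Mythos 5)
Your proposal is correct and follows essentially the same route as the paper: reduce the tauberian claim to the reflexivity claim via Proposition \ref{taub}, reduce to the weak-unit case, embed into $L_1(\mu)$ through a strictly positive functional, identify $\tau$ with convergence in measure, and invoke the Kadec--Pe{\l}czy\'nski characterization of reflexive subspaces of $L_1$. The only genuine difference is the reduction to the weak-unit case: the paper verifies weak compactness of $\Bob_E$ sequence by sequence, passing to the \emph{band} generated by a given sequence (which is automatically a projection band with a weak unit and is trivially regular), whereas you reduce to separable dispersed subspaces and pass to the generated sublattice or ideal. Both reductions ultimately rest on Eberlein--\v{S}mulian, and yours works, but note two small repairs. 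First, the \emph{closed ideal} generated by a separable subspace need not be separable (the closed ideal generated by a weak unit is all of $F$), so you should either take the closed sublattice generated by $E$ (which is separable, and is regular because closed sublattices of order continuous Banach lattices are regular --- a fact you should record, since the restriction result $\tau_G=\left.\tau_F\right|_G$ is stated for regular sublattices) or, as the paper does, a band generated by a countable set; in either case only the existence of a weak unit, not separability, is what produces the strictly positive $\mu$. Second, before applying the Kadec--Pe{\l}czy\'nski dichotomy you need $E$ to be $\|\cdot\|_{\mu}$-closed in $L_1(\mu)$; this follows because the $\|\cdot\|_{\mu}$-topology agrees with the complete norm topology on $E$, and is worth saying explicitly. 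Your parenthetical ``contains no copy of $\ell_1$ or $c_0$, hence reflexive'' is not a valid implication in general (James's space is a counterexample); but it is also unnecessary, since an isomorphic embedding into the reflexive space $L_p$, $p>1$, already yields reflexivity.
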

\begin{proof}
In the light of Proposition \ref{taub} it is enough to prove the first claim. Let $E\subset F$ be dispersed.

We first consider the case when $F$ has a weak unit. As was mentioned above, $F$ continuously embeds as an order dense ideal in $L_{1}\left(\mu\right)$, where $\mu$ is a finite measure, and $\tau_{F}$ is the restriction of the topology $\tau$ of convergence in measure. According to Corollary \ref{disp} the norm topology agrees with $\tau$ on $E$ (and so $E$ is $\tau$-complete). Since $\|\cdot\|_{\mu}$ is weaker than $\|\cdot\|$, but stronger than $\tau$ on $E$, we conclude that $\|\cdot\|_{\mu}$ agrees with $\tau$ on $E$ (and so $E$ is $\|\cdot\|_{\mu}$-complete, hence closed in $L_{1}\left(\mu\right)$). It now follows from \cite[Theorem 7.2.6]{ak} (or can be deduced from \cite[Theorem 4.1.3]{gm}) that $E$ is reflexive.\medskip

Now we tackle the general case. It is enough to show that $\Bob_{E}$ is weakly compact, or equivalently that every sequence in $\Bob_{E}$ has a weakly convergent subsequence. Let $\left(e_{n}\right)_{n\in\N}\subset \Bob_{E}$ and let $G$ be the band in $F$, generated by this sequence. Note that $G$ has a weak unit, e.g. $e:=\sum\limits_{n\in\N}\frac{1}{2^{n}}\left|e_{n}\right|$. Since $\tau_{G}$ is a restriction of $\tau_{F}$, it follows that this topology agrees with the norm topology on $G\cap E$. Hence, by Corollary \ref{disp} $G\cap E$ is dispersed in $G$, and so it is reflexive due to the special case. In particular, $\left(e_{n}\right)_{n\in\N}$ has a subsequence that weakly converges in $G\cap E$, hence in $E$.
\end{proof}

We conclude this section by considering the \emph{Positive Schure Property (PSP)}. Recall that $F$ is said to have the PSP if every weakly null positive sequence is norm-null. An equivalent condition is that every weakly-null and uo-null sequence on $F$ is norm-null
(see \cite[Theorem 3.7]{gtx}). We can now extend \cite[Theorem 2]{as} as follows.

\begin{proposition}\label{psp}
The following conditions are equivalent:
\item[(i)] $F$ satisfies the PSP;
\item[(ii)] Aw-topology agrees with the norm topology on sequences;
\item[(iii)] $F$ is order continuous and every weakly-null and $\tau_{F}$-null sequence on $F$ is norm-null;
\item[(iv)] Every weakly-null and uaw-null sequence on $F$ is norm-null.\medskip

Moreover, the conditions above imply the conditions in Proposition \ref{caub}.
\end{proposition}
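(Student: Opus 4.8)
The plan is to prove the cycle (i)$\Leftrightarrow$(ii) directly, and then (i)$\Rightarrow$(iii)$\Rightarrow$(iv)$\Rightarrow$(ii), so that all four conditions are equivalent. The equivalence (i)$\Leftrightarrow$(ii) is almost definitional: a sequence is aw-null exactly when $\left(\left|f_{n}\right|\right)_{n\in\N}$ is weakly null, and $\left\||f_{n}|\right\|=\|f_{n}\|$. Thus if $F$ has the PSP, any aw-null sequence has positive weakly null $\left|f_{n}\right|$, whence $\|f_{n}\|\to 0$; conversely a positive weakly null sequence is aw-null, so (ii) forces it to be norm-null. The implication (iv)$\Rightarrow$(ii) is immediate from the fact that the aw-topology is stronger than both the weak and the uaw-topologies: an aw-null sequence is simultaneously weakly null and uaw-null, hence norm-null by (iv).

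For the order continuity needed in (iii) I would note that, since (ii) holds, every order bounded disjoint sequence, being aw-null, is norm-null; this is precisely the Meyer--Nieberg criterion (see \cite[Theorem 2.4.2]{mn}) for $F$ to be order continuous. Once $F$ is order continuous we have $\tau_{F}=\mathrm{un}=\mathrm{uaw}$, so the sequential condition appearing in (iii) is literally the condition in (iv). This makes (iii)$\Rightarrow$(iv) trivial and reduces (i)$\Rightarrow$(iii) to proving that, under the PSP, every weakly null and uaw-null sequence is norm-null.

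That last statement is the crux, and I would argue it by contradiction via the Kadec--Pelczynski property. Suppose $\left(f_{n}\right)_{n\in\N}$ is weakly null and uaw-null but $\inf_{n}\|f_{n}\|>0$; after normalizing we obtain $\left(e_{n}\right)_{n\in\N}\subset \So_{F}$ which is still weakly null and $\tau_{F}$-null, since uaw and the weak topology are linear (hence locally balanced), so multiplication by the bounded scalars $1/\|f_{n}\|$ preserves nullity. As $F$ is order continuous, Corollary \ref{tauf} gives that $\tau_{F}$ has the Kadec--Pelczynski property, so passing to a subsequence we find a disjoint $\left(g_{k}\right)_{k\in\N}\subset\So_{F}$ with $\|e_{n_{k}}-g_{k}\|\to 0$. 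Then $\left(g_{k}\right)_{k\in\N}$ is weakly null, as a norm-perturbation of a weakly null sequence, and, being disjoint, is uo-null (see \cite[Corollary 3.6]{gtx}). The PSP in its form ``every weakly null and uo-null sequence is norm-null'' (see \cite[Theorem 3.7]{gtx}) then forces $\|g_{k}\|\to 0$, contradicting $\|g_{k}\|\to 1$. The main obstacle is exactly this step: one cannot pass from uaw-nullity to uo-nullity directly, and the Kadec--Pelczynski extraction of an honest disjoint sequence (which \emph{is} uo-null) is what bridges the gap.

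Finally, for the ``moreover'' clause I would verify condition (iv) of Proposition \ref{caub}, that the closed span of every disjoint sequence is non-reflexive. Let $\left(g_{k}\right)_{k\in\N}$ be disjoint and normalized; if its closed span were reflexive, some subsequence would converge weakly, say $g_{k_{j}}\xrightarrow{\mathrm{w}}e$. The spliced differences $d_{j}:=g_{k_{2j}}-g_{k_{2j-1}}$ are again disjoint, hence uo-null, and satisfy $d_{j}\xrightarrow{\mathrm{w}}0$, while disjointness gives $\|d_{j}\|=\left\||g_{k_{2j}}|+|g_{k_{2j-1}}|\right\|\ge 1$. This contradicts the PSP exactly as above, so no such weakly convergent subsequence exists and the span is non-reflexive. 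Forming the differences $d_{j}$ conveniently sidesteps having to identify the weak limit $e$.
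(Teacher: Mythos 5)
Your proof is correct, and it follows the same cycle of implications as the paper ((i)$\Leftrightarrow$(ii), then (i)$\Rightarrow$(iii)$\Rightarrow$(iv)$\Rightarrow$(ii)), but the two nontrivial steps are executed with different tools. For the crux implication (i)$\Rightarrow$(iii), the paper does not invoke the Kadec--Pelczynski property at all: after deriving order continuity (via the bounded exhaustivity of the un-topology and Proposition \ref{ocont} --- essentially the same Meyer--Nieberg argument you give), it simply quotes the fact that every un-null sequence has a uo-null \emph{subsequence} (\cite[Corollary 3.5]{dot}) and feeds that subsequence into the uo-form of the PSP. Your route --- extract an almost disjoint subsequence by Corollary \ref{tauf}, replace it by its disjoint approximants, and apply the PSP to those --- costs one extra perturbation step but buys nothing is lost: both arguments bottom out in \cite[Theorem 3.7]{gtx}, and yours makes explicit why the passage from uaw-nullity to uo-nullity needs a subsequence extraction. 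For the ``moreover'' clause the paper verifies condition (iv) of Proposition \ref{caub} by noting that a weakly convergent disjoint sequence must converge weakly to $0_{F}$, that disjoint sequences are uaw-null by Proposition \ref{uaw}, and then applying condition (iv) of the present proposition; your splicing trick with the differences $d_{j}$ is a clean alternative that avoids identifying the weak limit and uses the uo-form of the PSP directly. Both of your deviations are sound; the only cosmetic point is that in (i)$\Rightarrow$(iii) you should first pass to a subsequence with $\inf_{n}\|f_{n}\|>0$ (the negation of norm-nullity only gives this along a subsequence), which is the standard reduction and does not affect the argument.
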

\begin{proof}
(i)$\Leftrightarrow$(ii) follows from the definitions.

(i)+(ii)$\Rightarrow$(iii): It follows that the un-topology and the uaw-topology agree on sequences, and so by Proposition \ref{uaw} the un-topology is exhaustive. Hence, according to Proposition \ref{ocont} $F$ is order continuous, and so in particular $\tau_{F}$ is the un-topology. Assume that there is a weakly-null and un-null sequence which is not norm-null. Since every un-null sequence contains an uo-null subsequence (see \cite[Corollary 3.5]{dot}) we immediately run into contradiction with the criterion quoted above.

(iii)$\Rightarrow$(iv) follows from the fact that if $F$ is order continuous, then $\tau_{F}$ is the uaw-topology.

(iv)$\Rightarrow$(ii) follows from the fact that the aw-topology is stronger than the uaw- and weak topologies.\medskip

To prove the last claim assume the contrary, so that there if a disjoint $\left(f_{n}\right)_{n\in\N}\subset\So_{F}$ which spans a reflexive subspace of $F$. Then, $\left(f_{n}\right)_{n\in\N}$ is relatively weakly compact, and so by passing to a subsequence we may assume that it is weakly convergent. The only possible weak limit of a disjoint sequence is $0_{F}$. On the other hand, it follows from Proposition \ref{uaw} that $f_{n}\xrightarrow[]{\mathrm{uaw}}0_{F}$, therefore $\|f_{n}\|\to 0$, contradiction.
\end{proof}

In conjunction with Proposition \ref{caub} and Theorem \ref{disref} this result implies that if $F$ has the PSP, then a subspace of $F$ is dispersed iff reflexive, and an operator from $F$ is DNS iff tauberian. Note that these conditions do not imply the PSP (see \cite[Corollary 5]{as}).

\section{Acknowledgements}

The author is grateful to Enrique Garc\'ia-S\'anchez, Manuel Gonz\'alez, Tomasz Szczepanski, Mitchell Taylor and Vladimir Troitsky for valuable conversations on the topic of the paper.  Additional credit goes to Timur Oikhberg for contributing an idea to Proposition \ref{kp}, to user495577 from \href{mathoverflow.com/}{MathOverflow} for a serious help with Proposition \ref{kpck}, and to Francisco L. Hern\'andez for pointing out an error in the earlier version of Theorem \ref{albdc}.

\begin{bibsection}
\begin{biblist}

\bib{aa}{book}{
   author={Abramovich, Y. A.},
   author={Aliprantis, C. D.},
   title={An invitation to operator theory},
   series={Graduate Studies in Mathematics},
   volume={50},
   publisher={American Mathematical Society, Providence, RI},
   date={2002},
   pages={xiv+530},
}

\bib{ak}{book}{
   author={Albiac, Fernando},
   author={Kalton, Nigel J.},
   title={Topics in Banach space theory},
   series={Graduate Texts in Mathematics},
   volume={233},
   edition={2},
   note={With a foreword by Gilles Godefory},
   publisher={Springer, [Cham]},
   date={2016},
   pages={xx+508},
}

\bib{ab0}{book}{
   author={Aliprantis, Charalambos D.},
   author={Burkinshaw, Owen},
   title={Locally solid Riesz spaces with applications to economics},
   series={Mathematical Surveys and Monographs},
   volume={105},
   edition={2},
   publisher={American Mathematical Society, Providence, RI},
   date={2003},
   pages={xii+344},
}

\bib{as}{article}{
   author={Astashkin, S. V.},
   author={Strakhov, S. I.},
   title={On symmetric spaces with convergence in measure on reflexive
   subspaces},
   journal={Russian Math. (Iz. VUZ)},
   volume={62},
   date={2018},
   number={8},
   pages={1--8},
}

\bib{ball}{article}{
   author={Ball, Richard N.},
   title={Convergence and Cauchy structures on lattice ordered groups},
   journal={Trans. Amer. Math. Soc.},
   volume={259},
   date={1980},
   number={2},
   pages={357--392},
}

\bib{erz}{article}{
   author={Bilokopytov, Eugene},
   title={Disjointly non-singular operators on order continuous Banach
   lattices complement the unbounded norm topology},
   journal={J. Math. Anal. Appl.},
   volume={506},
   date={2022},
   number={1},
   pages={Paper No. 125556, 13},
}

\bib{erz1}{article}{
   author={Bilokopytov, Eugene},
   title={Locally solid convergences and order continuity of positive
   operators},
   journal={J. Math. Anal. Appl.},
   volume={528},
   date={2023},
   number={1},
   pages={Paper No. 127566, 23},
}

\bib{erz0}{article}{
   author={Bilokopytov, Eugene},
   title={Locally hulled topologies},
   journal={\href{http://arxiv.org/abs/2410.18509}{arXiv:2410.18509}},
   date={2024},
}

\bib{erzp}{article}{
   author={Bilokopytov, Eugene},
   title={Some applications of polar convergence on a vector lattice},
   journal={In preparation},
}

\bib{conradie}{article}{
   author={Conradie, Jurie},
   title={The coarsest Hausdorff Lebesgue topology},
   journal={Quaest. Math.},
   volume={28},
   date={2005},
   number={3},
   pages={287--304},
}

\bib{dot}{article}{
   author={Deng, Y.},
   author={O'Brien, M.},
   author={Troitsky, V. G.},
   title={Unbounded norm convergence in Banach lattices},
   journal={Positivity},
   volume={21},
   date={2017},
   number={3},
   pages={963--974},
}

\bib{engelking}{book}{
   author={Engelking, Ryszard},
   title={General topology},
   series={Sigma Series in Pure Mathematics, 6},
   publisher={Heldermann Verlag},
   place={Berlin},
   date={1989},
   pages={viii+529},
}

\bib{fhhmz}{book}{
   author={Fabian, Mari\'an},
   author={Habala, Petr},
   author={H\'ajek, Petr},
   author={Montesinos, Vicente},
   author={Zizler, V\'aclav},
   title={Banach space theory. The basis for linear and nonlinear analysis},
   series={CMS Books in Mathematics/Ouvrages de Math\'ematiques de la SMC},
   publisher={Springer, New York},
   date={2011},
   pages={xiv+820},
}

\bib{flt}{article}{
   author={Flores, J.},
   author={L\'{o}pez-Abad, J.},
   author={Tradacete, P.},
   title={Banach lattice versions of strict singularity},
   journal={J. Funct. Anal.},
   volume={270},
   date={2016},
   number={7},
   pages={2715--2731},
}

\bib{fopt}{article}{
   author={Freeman, D.},
   author={Oikhberg, T.},
   author={Pineau, B.},
   author={Taylor, M. A.},
   title={Stable phase retrieval in function spaces},
   journal={Math. Ann.},
   volume={390},
   date={2024},
   number={1},
   pages={1--43},
}

\bib{glx}{article}{
   author={Gao, Niushan},
   author={Leung, Denny H.},
   author={Xanthos, Foivos},
   title={Duality for unbounded order convergence and applications},
   journal={Positivity},
   volume={22},
   date={2018},
   number={3},
   pages={711--725},
}

\bib{gtx}{article}{
   author={Gao, Niushan},
   author={Troitsky, Vladimir G.},
   author={Xanthos, Foivos},
   title={Uo-convergence and its applications to Ces\`aro means in Banach
   lattices},
   journal={Israel J. Math.},
   volume={220},
   date={2017},
   number={2},
   pages={649--689},
}

\bib{gm}{book}{
   author={Gonz\'{a}lez, Manuel},
   author={Mart\'{\i}nez-Abej\'{o}n, Antonio},
   title={Tauberian operators},
   series={Operator Theory: Advances and Applications},
   volume={194},
   publisher={Birkh\"{a}user Verlag, Basel},
   date={2010},
   pages={xii+245},
}

\bib{gmm}{article}{
   author={Gonz\'{a}lez, Manuel},
   author={Mart\'{\i}nez-Abej\'{o}n, Antonio},
   author={Martin\'{o}n, Antonio},
   title={Dijointly non-singular operators on Banach lattices},
   journal={J. Funct. Anal.},
   volume={280},
   date={2021},
   number={8},
   pages={108944},
}

\bib{gm1}{article}{
   author={Gonz\'alez, Manuel},
   author={Martin\'on, Antonio},
   title={A quantitative approach to disjointly non-singular operators},
   journal={Rev. R. Acad. Cienc. Exactas F\'is. Nat. Ser. A Mat. RACSAM},
   volume={115},
   date={2021},
   number={4},
   pages={Paper No. 185, 12},
}

\bib{gm2}{article}{
   author={Gonz\'alez, Manuel},
   author={Martin\'on, Antonio},
   title={Disjointly non-singular operators: extensions and local
   variations},
   journal={J. Math. Anal. Appl.},
   volume={530},
   date={2024},
   number={2},
   pages={Paper No. 127685, 10},
}

\bib{klt}{article}{
   author={Kandi\'{c}, M.},
   author={Li, H.},
   author={Troitsky, V. G.},
   title={Unbounded norm topology beyond normed lattices},
   journal={Positivity},
   volume={22},
   date={2018},
   number={3},
   pages={745--760},
}

\bib{kmt}{article}{
   author={Kandi\'{c}, M.},
   author={Marabeh, M. A. A.},
   author={Troitsky, V. G.},
   title={Unbounded norm topology in Banach lattices},
   journal={J. Math. Anal. Appl.},
   volume={451},
   date={2017},
   number={1},
   pages={259--279},
}

\bib{labuda}{article}{
   author={Labuda, Iwo},
   title={Submeasures and locally solid topologies on Riesz spaces},
   journal={Math. Z.},
   volume={195},
   date={1987},
   number={2},
   pages={179--196},
}

\bib{mn}{book}{
   author={Meyer-Nieberg, Peter},
   title={Banach lattices},
   series={Universitext},
   publisher={Springer-Verlag, Berlin},
   date={1991},
   pages={xvi+395},
}

\bib{taylor}{article}{
   author={Taylor, Mitchell A.},
   title={Unbounded topologies and $uo$-convergence in locally solid vector
   lattices},
   journal={J. Math. Anal. Appl.},
   volume={472},
   date={2019},
   number={1},
   pages={981--1000},
}

\bib{weber}{article}{
   author={Weber, Hans},
   title={FN-topologies and group-valued measures},
   conference={
      title={Handbook of measure theory, Vol. I, II},
   },
   book={
      publisher={North-Holland, Amsterdam},
   },
   date={2002},
   pages={703--743},
}

\bib{zabeti}{article}{
   author={Zabeti, Omid},
   title={Unbounded absolute weak convergence in Banach lattices},
   journal={Positivity},
   volume={22},
   date={2018},
   number={2},
   pages={501--505},
}

\end{biblist}
\end{bibsection}

\end{document}